\newtheorem{theorem}{Theorem}
\newtheorem{remark}[theorem]{Remark}
\newtheorem{corollary}[theorem]{Corollary}
\newtheorem{lemma}[theorem]{Lemma}
\newtheorem{definition}[theorem]{Definition}
\newtheorem{question}{Question}
\newtheorem{proposition}[theorem]{Proposition}
\begin{document}
This arXiv submission was split into two parts. The second part begins at page 25
and the abstract of the second part appears on page 25.
\pagebreak
\def\F{{\mathbb F}}
\title{ On some connections between braces and pre-Lie rings outside of the context of Lazard's correspondence}
\author{ Agata Smoktunowicz}
\date{ }
\maketitle

\begin{abstract}  
Let $A$ be a brace of cardinality $p^{n}$ for some prime number $p$.   Let $k$ be a natural number. Denote $ann(p^{k})=\{a\in A: p^{k}a=0\}$.  
It is shown that,  under some additional assumptions, the quotient brace $A/ann(p^{2k})$ 
 is obtained by an  algebraic formula from the brace $p^{k}A$, and that this formula only depends on the additive group of brace $A$. Moreover,  $p^{k}A$ is a strongly nilpotent brace. 
 This result is used to describe the structure of some  types of braces.

\end{abstract} 
In 2005, Rump introduced the notion of a brace in order to describe all non-degenerate involutive set-theoretic solutions of the Yang-Baxter equation \cite{rump}. 
 Braces are similar to rings, and they were used to develop new methods  to describe involutive set-theoretic solutions of Yang-Baxter equation. Braces also have connections to quantum integrable systems, Hopf-Galois extensions, group theory, homological algebras  and Lie algebras.

The structure of  braces of cardinality $p^{n}$ for a prime $p$  and a natural $n$ with $p$ larger than $n$  is linked to pre-Lie rings and is relatively well understood \cite{Rump, passage, Senne}. 
 On the other hand, some very unusual results hold for braces of cardinality $p^{n}$ with $p<n$; for example, in \cite{Bellester}
 examples of right nil and not right nilpotent braces of cardinality $2^{5}$ were found.

 In this paper we  investigate the stucture of braces 
 of cardinality $p^{n}$ with $n$ larger than $p$. 
We obtain a pathway from  right nilpotent braces to such  braces, 
 under some additional assumptions.   The main motivation of our result   is that 
 the structure of right nilpotent braces seems to be better understood, and it might be  possible to connect right nilpotent braces with other algebraic structures, such as for example pre-Lie rings, Lie-rings,  or Lie affgebras introduced in \cite {an}. Some of these  connections with pre-Lie rings are described in \cite{paper1, paper3}.

The right nilpotency of braces and skew braces
has been investigated by various authors.  In some cases the structure of right nilpotent braces is already known.
  In \cite{KS}, Kurdachenko and Subbotin developed  new methods to describe the structure of  braces with small nilpotency index.

 Right nilpotent braces were introduced by Rump in \cite{rump}. Such braces are closely connected to set-theoretic solutions of a finite multipermutation level \cite{gateva}.  
 In \cite{BA3}, Bachiller developed a method for constructing  all right nilpotent braces,  and showed that all braces of cardinality $p^{3}$ for a prime number $p$ are right nilpotent.
  In \cite{Dora1}, Pulji{\' c} showed that all braces of cardinality $p^{4}$ for a prime number $p$ (except of one type)  are right nilpotent.
  Note also  that interesting examples of braces of cardinality $p^{4}$ were  constructed in \cite{DB}.
  Right nilpotent braces and their connections with set-theoretic solutions of the Yang-Baxter equation were  investigated in \cite{ TB, Cedo, cjo, gateva,  LV}. 
 Notice that an important class of set-theoretic solutions of finite multipermutation level is closely related to right nilpotent braces \cite{gateva}, as such solutions are obtained from right nilpotent braces when using the usual formula $r(x,y)=(x\circ y-x, z\circ x  )$ where $z$ is the inverse element of  $(x\circ y-x)$ in the multiplicative group of the brace (and finite solutions come from finite braces \cite{CGS}).
 The related concept of retraction of set-theoretic solutions was introduced by Etingof, Shedler and Soloviev (the retraction of set-theoretic solution corresponds to division of the brace by the socle and is related to nilpotency) \cite{Etingof}.
 Note that for non-involutive solutions there is also a retraction \cite{biracks}.

It is also known that every brace  which is both left nilpotent and right nilpotent is strongly nilpotent \cite{Engel}, so the product of any $n$ elements in this brace under operation $*$ is zero, for some $n$.
$ $

 As mentioned by Vendramin in \cite{49}, braces and skew braces of cardinality $p^n$ can
be viewed as building blocks of other braces. It was shown in \cite{42} that the
multiplicative group of a brace and subbraces which are its Sylow’s subgroups (which
have cardinality $p^n$) determine immediately the whole brace. 
 In \cite{Jespers}, Jespers, Van Antwerpen and Vendramin introduced  an analogon  Wedderburn-Artin theorem for braces and skew braces and introduced the radical of a given brace. The general approach in \cite{Jespers}  is  to describe the radical  $J(A)$ of a brace $A$ and the semisimple brace $A/J(A)$. Therefore the description of radical braces is important for the structure of general  braces.  Observe that all braces of cardinality $p^{n}$ with prime $p$ are radical, since by the result of Rump \cite{rump} they are left nilpotent.
 Recall that skew braces were introduced in \cite{GV} and they share many common properties with braces.

 Because braces are a generalisation of Jacobson radical rings they can be investigated by applying methods from noncommutative ring theory, and we will use these methods in this paper.
  Methods from noncommutative ring theory were  used in several other papers on related  topics \cite{TB, cjo, Rio, CO,  Facchini, KS}. 
 
The aim of this paper is to prove  the following result:

\begin{theorem}\label{main}
Let $A$ be a brace of cardinality $p^{n}$ for some prime number $p$. Suppose that for  $i=1,2,\ldots $ and all $a,b\in A$ we have 
 \[a*(a*(\cdots *a*b))\in pA, a*(a*(\cdots *a*ann(p^{i})))\in ann(p^{i-1})\]  where $a$ appears less than $\frac {p-1}4$ times in this expression. 
 Let $k$ be a natural number. Then the factor  brace $A/ann(p^{2k})$ 
 is obtained by an  algebraic formula from the brace $p^{k}A$ and that this formula only depends on the additive group of brace $A$.
\end{theorem}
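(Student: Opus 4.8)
The plan is to run the argument in three stages: (i) identify $p^{k}A$ as a subbrace to which a nilpotency dichotomy applies and prove it is strongly nilpotent (this also gives the corresponding assertion of the abstract); (ii) set up an explicit dictionary between the brace operations of $A/\mathrm{ann}(p^{2k})$ and those of $p^{k}A$, using the distributivity laws and the multiplication‑by‑$p^{k}$ map; (iii) check that this dictionary is a universal rule that refers to $A$ only through the isomorphism type of $(A,+)$. Throughout I would use that $p^{j}A$ and $\mathrm{ann}(p^{j})=\{a\in A:p^{j}a=0\}$ are ideals of any brace (they are characteristic subgroups of $(A,+)$, preserved by every $\lambda_{a}$ and by $\circ$-conjugation), so that all subbraces and quotients below are legitimate. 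The elementary backbone of stage (ii) is the observation that the additive map $a\mapsto p^{k}a$ sends $A$ onto $p^{k}A$ with kernel $\mathrm{ann}(p^{k})$ and sends $\mathrm{ann}(p^{2k})$ into $p^{k}A\cap\mathrm{ann}(p^{k})$, and in fact induces an additive isomorphism
\[
A/\mathrm{ann}(p^{2k})\ \xrightarrow{\ \sim\ }\ p^{k}A\big/\bigl(p^{k}A\cap\mathrm{ann}(p^{k})\bigr),\qquad a+\mathrm{ann}(p^{2k})\ \longmapsto\ p^{k}a+\bigl(p^{k}A\cap\mathrm{ann}(p^{k})\bigr).
\]
Transporting the brace structure of $A/\mathrm{ann}(p^{2k})$ across this isomorphism produces a ``new'' operation $\star$ on the right‑hand group, determined by $p^{k}a\star p^{k}b=p^{k}(a*b)$; since the additive group is already known and $x\circ y=x+y+x*y$, reconstructing $A/\mathrm{ann}(p^{2k})$ amounts to expressing $\star$ in terms of the operation $*$ induced on $p^{k}A$ from the subbrace structure.

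\textbf{Stage 1: strong nilpotency of $p^{k}A$.}
The subbrace $B:=p^{k}A$ has $p$-power order, hence is left nilpotent by Rump's theorem. For right nilpotency I would use the two displayed hypotheses: the first one forces every sufficiently long iterated $*$-expression in elements of $B$ that contains a repeated entry into $pA$, and, combined with the left nilpotency of $B$, with the identity $(u+v)*w=u*w+v*w+(u*v)*w$, and with the second displayed inclusion to keep the relevant torsion under control, a straightforward induction pushes the right powers $B^{(m)}$ (with $B^{(1)}=B$ and $B^{(m+1)}=B^{(m)}*B$) into $pA\cap B=p^{k+1}A$, then into $p^{k+2}A$, and so on; since $A$ is finite this yields $B^{(m)}=0$ for some $m$. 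Being both left and right nilpotent, $B=p^{k}A$ is strongly nilpotent by \cite{Engel}, so some fixed‑length $*$-product of elements of $p^{k}A$ vanishes in every bracketing.

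\textbf{Stages 2--3: the reconstruction formula.}
Here I would compare $\star$ with $*$ on $p^{k}A$. Using left distributivity ($u*(v+v')=u*v+u*v'$, so $u*(p^{k}v)=p^{k}(u*v)$) together with the iterated expansion of $(p^{k}a)*b$ obtained from $(u+v)*w=u*w+v*w+(u*v)*w$, one gets a finite identity
\[
(p^{k}a)*(p^{k}b)\ =\ \sum_{j\ge 1} c_{j}(p^{k})\,\bigl(p^{k}\,\omega_{j}(a,b)\bigr),
\]
where each $\omega_{j}(a,b)$ is a fixed $*$-word built from $j$ copies of $a$ and one copy of $b$, the sum is finite by the strong nilpotency from Stage 1, and the $c_{j}$ are universal integer‑valued polynomials with $c_{1}(p^{k})=p^{k}$. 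For $j$ in the range allowed by the hypothesis the scalar $c_{j}(p^{k})$ is a $p$-adic unit times $p^{k}$, while the word $\omega_{j}(a,b)$ (for $j\ge 2$) lies in $pA$ by the first displayed inclusion; matching these divisibilities against $p^{k}a\star p^{k}b=p^{k}(a*b)$ lets one rewrite the displayed relation, \emph{inside} $p^{k}A\big/(p^{k}A\cap\mathrm{ann}(p^{k}))\cong A/\mathrm{ann}(p^{2k})$, as an equality expressing $\star$ as $*$ corrected by strictly higher‑order terms. That correction is nilpotent (finite sum, higher word‑length at each step), so the resulting triangular system can be inverted, yielding $\star$ — and hence $\circ$ on $A/\mathrm{ann}(p^{2k})$ — as an explicit finite expression in $(p^{k}A,+,*)$. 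Finally, every ingredient of this procedure (pass to $p^{k}A$; form $p^{k}A\cap\mathrm{ann}(p^{k})$; carry out the divisions by powers of $p$ demanded by the $c_{j}(p^{k})$; invert the triangular correction) uses only the abelian group $(p^{k}A,+)$, the operation $*$ on it, and the exponents of $(A,+)$ that dictate which such divisions are defined; so the formula depends on $A$ only through its additive group, as claimed.

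\textbf{The main obstacle.}
The delicate point is the inversion step in Stage 2: one must show that the scalar coefficients $c_{j}(p^{k})^{-1}$ and the further rational constants generated by iterating the non‑additive law $(u+v)*w=u*w+v*w+(u*v)*w$ never require dividing an element of $A$ by a power of $p$ exceeding its additive order, so that $\star$ really is recovered over $A/\mathrm{ann}(p^{2k})$ and not merely over some localization. This is exactly where the quantitative hypothesis, with the constant $\tfrac{p-1}{4}$, is forced into the proof, and where being ``outside Lazard's correspondence'' bites: one cannot appeal to the Lazard range $p>n$ and must instead bound, term by term, both the $p$-adic valuation of the combinatorial scalars and the additive torsion of the brace elements they multiply, balancing the $p$-divisibility supplied by the hypotheses against the $p$ in the denominators of the universal formulas.
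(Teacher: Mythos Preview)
Your proposal has a genuine gap at its algebraic core. The identity you invoke repeatedly, ``$(u+v)*w=u*w+v*w+(u*v)*w$'', is not a brace identity: the brace axiom gives $(u\circ v)*w=u*w+v*w+u*(v*w)$ with $u\circ v=u+v+u*v$, i.e.\ $(u+v+u*v)*w=u*w+v*w+u*(v*w)$. There is no clean closed formula for $(u+v)*w$, and hence no expansion of $(p^{k}a)*b$ as a finite $\mathbb Z$-linear combination of left-iterated words $a*(a*(\cdots *b))$. Your displayed identity $(p^{k}a)*(p^{k}b)=\sum_{j}c_{j}(p^{k})\,p^{k}\omega_{j}(a,b)$ therefore has no derivation, and the triangular inversion built on it collapses. (A smaller point in the same vein: $p^{j}A$ and $\mathrm{ann}(p^{j})$ are \emph{not} ideals in an arbitrary brace; $\lambda_{a}$ preserves them, so $a*I\subseteq I$, but $I*a\subseteq I$ fails in general. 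In this paper that inclusion is a consequence of Property~1, proved separately.)

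The paper sidesteps exactly this obstruction by replacing the additive multiple $p^{k}a$ with the $\circ$-power $a^{\circ p^{k}}$, for which one \emph{does} have the clean expansion $a^{\circ m}*b=\sum_{j\ge 1}\binom{m}{j}\,e'_{j}(a,b)$ with $e'_{j}(a,b)=a*(a*(\cdots *b))$. This leads to an auxiliary operation $a\star b$ with $p^{k}(a\star b)=(a^{\circ p^{k}})*b$ and to a map $f$ with $p^{k}f(a)=a^{\circ p^{k}}$. The second crucial ingredient your plan has no analogue of is ``Property~2'': the induced map $[a]\mapsto[f(a)]$ is a \emph{bijection} of $A/\mathrm{ann}(p^{2k})$ (this is a nontrivial result imported from \cite{paper1}). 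Only through this bijection, and its iterated inverse $g$, can one pass from $\star$ to the operation $\odot$ with $p^{2k}([a]\odot[b])=(p^{k}a)*(p^{k}b)$, which lives genuinely on $p^{k}A$; the final step is a layered comparison of three families of words $e_{i},\tilde e_{i},\bar e_{i}$ that implements the inversion you were aiming for, but with the correct expansion and with the $p$-divisibility bookkeeping (the $e_{0}$/$\rho^{-1}$ device and the filtration $E_{i}^{t}$) done explicitly rather than asserted. Your ``main obstacle'' paragraph correctly identifies where the difficulty lies, but the machinery needed to overcome it is precisely the $a^{\circ p^{k}}$/Property~2 route, not an inversion of a $(p^{k}a)*b$ expansion that does not exist.
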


 Recall also that if a brace $A$ satisfies the assumptions of Theorem \ref{main} then $ann(p^{i})$ is an ideal in brace $A$ for each $i$ by \cite{paper1}.

 Note that the brace $p^{k}A$ in Theorem \ref{main} is strongly nilpotent \cite{paper1}. That means that the product of any $n$ elements, for some $n$, with any distribution of brackets is zero (with the operation $a*b=a\circ b-a-b$).  
 This makes the structure of such a brace  easier to describe than the structure of a general brace.

 Recall that braces are related to many algebraic structures, for example to Jacobson-radical rings, with two-sided braces being exactly Jacobson radical rings.
   Braces are in one-to-one correspondence with braided groups with involutive braiding operators \cite{gateva}, a structure which was used to decribe 
 set-theoretic solutions of the Yang-Baxter equation since 1999.   It was shown by Rump \cite{rump} that the structure groups of braces introduced in \cite{Etingof} by Etingof, Schedler and Soloviev are braces. 
 Another connection exists with the algebraic number theory area of Hopf-Galois extensions. It is known that braces and skew braces are in  correspondence with Hopf-Galois extensions, with the only difference being that the ennumeration of elements matters in Hopf-Galois extensions; but it does not matter in isomorphic braces, so one brace may correspond to several Hopf-Galois extensions. 

 Braces  are also connected   to many other topics in algebra and mathematical physics,  
 for example braces are exactly bijective 1-cocycles on abelian groups \cite{Rio} and hence have connections with homological algebra and group theory. 
There are also strong connections to Lie algebras, cohomology of groups  and  cohomology of algebras  \cite{cs}.

The involutive set-theoretic  solutions of the Yang-Baxter equation  have connections with quantum integrable systems, and they allow us to construct quantum-integrable sysytems \cite{Doikou}. The situation is not the same for non-involutive solutions,  however non-involutive solutions appear in  virtual Knot theory and noncommutative algebraic geometry in connections with physics within the research of Majid. Recently connections between braces and flat mainfolds were discovered in \cite{crystal}. 

 In this paper we only consider involutive solutions of the YBE. All such solutions are described by braces by \cite{rump} and they all have connections with quantum integrable systems, as shown in \cite{doikou}.   Notice that the quantum integrable systems associated to the quantum groups are of a different type to the quantum integrable systems associated to braces and set-theoretic solutions of the Yang-Baxter equation.
 Braces and skew-braces are also connected to reflection and Pentagon equations \cite{pent, cov, doikou}.

\section{Background information } \label{b}

 We will use the same notation as in \cite{passage, asas, paper1}. We recall this notation below for the convenience of the reader:
A set $A$ with binary operations $+$ and $* $ is a {\em  left brace} 
  \[(a+b+a*b)* c=a* c+b* c+a* (b* c), \space  a* (b+c)=a* b+a* c,\]
for all $a, b, c\in A$; moreover  $(A, \circ )$ is a group, where we define $a\circ b=a+b+a* b$ and $(A, +)$ is an abelian group.
In what follows we will use the definition in terms of the operation `$\circ $' presented in \cite{cjo} (see \cite{Rump}
for the original definition): a set $A$ with binary operations of addition $+$ and multiplication $\circ $ is a brace if $(A, +)$ is an abelian group, $(A, \circ )$ is a group and for every $a,b,c\in A$
\[a\circ (b+c)+a=a\circ b+a\circ c.\]
  All braces in this paper are left braces, and we will just call them braces.
 
Let $(A, +, \circ )$ be a brace.  Recall that $I\subseteq A$ is an ideal in $A$ if for
$i,j\in I$, $a\in A$ we have $i+j\in I, i-j\in I, i*a, a*i\in I$ where $a*b=a \circ b-a-b$.

  Recall that if $I$ is an ideal in the brace $A$ then the factor brace $A/I$ is well defined.
The elements of the brace $A/I$ are cosets $[a]_{I}:= a + I=\{a+i:i\in I\}$ where $a \in A$, which we will simply denote by $[a]_{I}$, so $[a]_{I} =[b]_{I}$ if and only if $a-b\in I$.
 Recall that for $a\in A$ and a natural number $i$, $ia$ denotes the sum of $i$-copies of $a$.

Let $A$ be a brace and $a\in A$ then we denote $a^{\circ j}=a\circ a\cdots \circ a$
  where $a$ appears $j$ times in the product on the right hand side.

Let $A$ be a brace. Let $B, C\subseteq A$, then by $B*C$ we define the additive subgroup of $A$ generated by 
 elements $b*c$ where $b\in B$, $c\in C$.

 Recall that $A^{1}=A$ and inductively $A^{i+1}=A*A^{i}$ is the left nilpotent series of $A$. This series was introduced in \cite{Rump} by Wolfgang  Rump, and 
 he showed that if $A$ is a brace of cardinality $p^{n}$ for a prime number $p$ and a natural number $n$ then $A^{n+1}=0$. 

\subsection{Pullback}\label{pullback}
In this section we slightly modify the definition of a pullback from \cite{asas}. We use the same approach as in \cite{paper1}. 
Let $A$ be a brace of cardinality $p^{n}$ for some prime number $p$ and some natural number $n$. 
Fix a natural number $k$.  
For $a\in p^{k}A$ let $\wp^{-1}(a)$ denote an element $x\in A$ such that $p^{k}x=a$. Such an element may not be uniquely determined in $A$, but we can fix for every $a\in p^{k}A$ such an element $\wp^{-1}(a)\in A$.
Notice that $p^{k}(\wp^{-1}(a))=p^{k}x=a$.

 For $A$ as above, $ann(p^{i})=\{a\in A:p^{i}a=0\}$. 
 We recall a result from \cite{paper1}.
\begin{lemma}\label{33} Let $A$ be a brace and let $I$ be an ideal in $A$. Let $A/I$ be defined as above. Assume that $ann(p^{k})\subseteq I$. 
Let $\wp^{-1} : p^{k}A \rightarrow A$ be defined as above. Then, for $a, b\in p^{k}A$ we have
$[\wp^{-1}(a)]_{I} + [\wp^{-1}(b)]_{I} = [\wp^{-1}(a + b)]_{I}.$ 
This implies that for any integer $m$ we have
 $[m \wp^{-1}(a)]_{I} = [\wp^{-1}(ma)]_{I}.$
\end{lemma}

 In this paper we will assume that 
\[\wp^{-1}(a)=(\rho ^{-1} )^{k}(a)=\rho^{-1}( \cdots (\rho^{-1}(a)\cdots )),\] where $\rho ^{-1}:pA\rightarrow A$ is a given function $\rho ^{-1}:pA\rightarrow A$ such that $p\cdot \rho^{-1}(x)=x$, for $x\in pA$.
 Observe that
\[\rho ^{-1}(a+b)-\rho^{-1}(a)-\rho ^{-1}(b)\in ann(p),\]
for all $a, b\in A$.
 Lemma \ref{33} holds for function $\rho ^{-1}$ when we assume that  $k=1$.

\subsection{Pseudobraces} 
In this section we  introduce pseudobraces. All braces are pseudobraces.

\begin{definition}\label{pseudobrace}
Let $A$ be a set, and let $+$ and $\circ $ be binary operations of $A$. Let $a,b\in A$. We denote $a\circ b=a*b-a-b$.  
We denote  $a^{1}=a$ and inductively $a^{\circ  i+1}=a\circ a^{\circ  i}$.
We say that $A$ is a pseudobrace if the following properties hold:
\begin{enumerate} 
\item $(A,+)$ is an abelian group of cardinality $p^{n}$ for some prime number $p$ and some natural number $n$. 
\item For every $a\in A$ there is an element in $A$, denoted as $a^{\circ  -1}$, such that $a\circ a^{\circ -1}=0$, where $0$ is the identity in the group $(A,+)$.
 We denote $a^{\circ -2}=a^{\circ -1}\circ a^{\circ -1}$, $a^{\circ -3}=a^{\circ -1}\circ a^{\circ -2}$, etc..
\item For all integers $i,j$ and every $a,b\in A$, $(a^{\circ  i}\circ a^{\circ j})\circ b=a^{ \circ i+j}\circ b$ 
 and $a^{\circ i}\circ a^{\circ j}=a^{\circ i+j}$, $a^{circ i}\circ b=a\circ (a\circ \cdots a\circ b))$ where $a$ appears $i$ times on the right-hand side of this equation. 
\item $A^{n+1}=0$ where $A^{1}=A$ and inductively $A^{i+1}=A*A^{i}$.  
\item $a*(b+c)=a*b+a*c$ for all $a,b,c\in A$.
\end{enumerate}
\end{definition}

$ $

 The group $(A, +)$ is called the additive group of pseudobrace $A$.

$ $

For a pseudobrace $A$ we define $p^{i}A=\{p^{i}a:a\in A \}$, $ann(p^{i})=\{a\in A:p^{i}A=0\}$.

Pseudobraces were introduced in \cite{paper3} in the context of generalised group of flows. Every brace is a pseudobrace. Our main result (Theorem \ref{71}) holds both for braces and for pseudobraces with the same proof, hence we consider the more general version of pseudobrace.
 Readers who prefer to consider only pseudobraces can, in what follows, at each place  use the word 'brace' instead of 'pseudobrace'. 
\subsection{ Operation $\odot $ for braces and pseudobraces}

In this section we define operation $\odot $ for braces and pseudobraces.

Let $A$ be a pseudobrace of cardinality $p^{n}$ and let $k$ be a natural number. Note that if $ann(p^{2k})=\{a\in A: p^{2k}a=0\}$  is an ideal in the pseudobrace $A$ then the factor brace $A/ ann(p^{2k})$ is well defined.
The elements of the pseudobrace $A/I$ are cosets $[a]_{I}:= a + I=\{a+i:i\in I\}$ where $a \in A$. In this section we denote for $a\in a$,  $[a]_{ann(p^{2})}=[a]$.
  
The following result is a generalisation of Theorem $8.1$, \cite{asas}.
\begin{theorem}\label{odot}
Let $(A, +, \circ )$ be a pseudobrace of cardinality $p^{n}$ for some prime number $p$ and some natural number $n$. Let $k$ be a natural number such that $p^{k(p-1)}A=0$. Assume that $p^{k}A$, $ann(p^{k})$, $ ann(p^{2k})$ are ideals in $A$ and $(p^{k}A)*ann(p^{2k})\subseteq ann(p^{k})$. Let $\wp^{-1}:p^{k}A\rightarrow A$ be defined as in the section \ref{pullback}. Let $\odot $ be defined as 
\[[x]\odot [y]=[\wp^{-1}((p^{k}x)*y)].\] Then $\odot $ is a well defined operation on $A$.
\end{theorem}
\begin{proof} The same proof as in \cite{paper1} works.
\end{proof}

\subsection{Properties $1$ and  $2$}\label{property2}

\begin{definition}
 Let $A$ be a pseudobrace of cardinality $p^{n}$ for some prime number $p>3$ and some natural number $n$. We will say that $A$ satisfies {\em property $1$ } if 
\begin{enumerate}  
 \item for every $a, b\in A$, \[a*(a*( \cdots a*b))\in pA,\]
 \item and for every $a\in A$ and every $i>0$ 
\[a*(a*( \cdots a*ann(p^{i})))\in ann(p^{i-1})\]
\end{enumerate}
 where $a$ appears $\lfloor {\frac {p-1}4}\rfloor$ times in each of the  above expressions. 
\end{definition}

Let $A$ be a pseudobrace of cardinality $p^{n}$ for some prime number $p>3$ and some natural number $n$. We fix natural number $k>0$.

\begin{definition} Let $k$ be a natural number. We say that  a pseudobrace $A$  satisfies property $2$ (for this given number $k$)  if  for each $a,b\in A$, 
 we have that $p^{2k}a=p^{2k}b$ if and only if $p^{k}a^{\circ p^{k}}=p^{k}b^{\circ p^{k}}$. We also assume that  $p^{i}A$, $ann(p^{i})$ are ideals in $A$ for each $i$ and $(p^{i}A)*ann(p^{j})\subseteq ann(p^{j-i})$ for all $j\geq i$.
\end{definition}
 
\begin{lemma}\label{j} Let $A$ be a pseudobrace of cardinality $p^{n}$ for some prime number $p$ and some $n$.  Let $f:A\rightarrow A$ be a function  such that $p^{k}f(a)=a^{\circ p^{k}}$ for each $a\in A$.
  Then property $2$ implies that  the map $[a]_{ann (p^{2k})}\rightarrow [f(a)]_{ann (p^{2k})}$  is well defined and one-to-one.
\end{lemma}
\begin{proof}
 This follows from the fact that for $x,y\in A$, we have $[x]_{ann(p^{2k})}= [y]_{ann (p^{2k})} $ if and only if  
$x-y\in ann(p^{2k})$. This in turn is equivalent to $p^{2k}(x-y)=0$. So the map  $[a]_{ann (p^{2k})}\rightarrow [f(a)]_{ann (p^{2k})} $  is well defined and injective. Since  set $A/ann(p^{2k})$ is finite it follows that ths  is a bijection on $A/ann(p^{2k})$.
\end{proof} 
 \begin{lemma}\label{g(a)}
 Let $A$ be a pseudobrace satisfying property $2$. Let notation be as in Lemma \ref{j}. For $a\in A$
 define \[g(a)=f^{(p^{n}!-1)}(a)\] where $f^{(1)}(a)=f(a)$ and for every $i$ we denote $f^{(i+1)}(a)=f(f^{(i)}(a))$. 
 Then \[[f(g(a))]=[g(f(a))]=[a].\]
  Moreover
  \[[f(a)]=[g^{(p^{n}!-1)}(a)]\]  where $g^{(1)}(a)=a$ and for every $i$ we denote $g^{(i+1)}(a)=g(g^{(i)}(a))$.
 \end{lemma}
\begin{proof} 
  Observe first that since $[a]\rightarrow [f(a)]$ is a bijection on $A/ann(p^{2k})$, then 
  for every $i$, $[f^{(i)}(a)]\rightarrow [f^{(i+1)}(a)]$ is a bijective function on $A/ann(p^{2k})$, and hence for every $k>0$
  $[a]\rightarrow [f^{(k)}(a)]$ is a bijective function on $A/ann(p^{2k})$.  
 Fix $a\in A$. Notice that there are $p^{n}+1$ elements  \[[f^{(1)}(a)], [f^{(2)}(a)], \cdots , [f^{(p^{n}+1)}(a)],\] and since brace $A$ has cardinality $p^{n}$ it follows that \[[f^{(i)}(a)]=[f^{(j)}(a)],\]
for some $1\leq i<j\leq p^{n}+1$, hence $j-i\leq p^{n}$.
 Notice that the function
  $[f^{(i)}(a)]\rightarrow [a] $ is a bijective function on $A/ann(p^{2k})$. 
  Applying this function to both sides of equation
  $[f^{(i)}(a)]=[f^{(j)}(a)],$
  we obtain 
 $[a]=[f^{(j-i)}(a)].$
 Notice that $j-i\leq p^{n}$ implies that $j-i$ divides $p^{n}!$. Therefore 
  $[f^{(p^{n}!)}(a)]=[a].$
   This shows that $[f(g(a))]=[g(f(a))]=[a].$
 
 On the other hand, by the first part of this proof it follows that  $[a]\rightarrow [g(a)]$ is a bijective function, since   $[g(a)]=[f^{(p^{n}!-1)}(a)].$
 
 Observe now that $[g^{(p^{n}!-1)}(a)]=[f^{((p^{n}!-1)^{2})}(a)]=[f(a)],$
  since $[f^{(p^{n}!)}(a)]=[a]$.
\end{proof}

 \subsection{Theorem \ref{71}}
The aim of this section is the following result: 
\begin{theorem}\label{71}
 Let $A$ be a pseudobrace which satisfies Property $1$ and Property $2$. Let $k$ be a natural number.
 Let $\rho ^{-1}:pA\rightarrow A$ be any function such that $p\cdot \rho ^{-1}(x)=x$ for each $x\in A$. Denote $[a]=[a]_{ann(p^{2k})}$ for $a\in A$.
 Let $a,b\in A$, then $[a]*[b]$ is obtained by applying
 operations $+$, $\odot $, 
 to elements $[a]$ and $[b]$ and $\rho ^{-1}$ to elements from $pA$, and the order of applying these operation depends only on the additive group of $A$ (it does not depend on $a$ and $b$ and it does not depend on any properties of $*$).  Moreover, the result
 does not depend of the choice of function $\rho ^{-1}$. 
\end{theorem}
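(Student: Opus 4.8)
The plan is to express the brace multiplication $[a]*[b]$ on $A/ann(p^{2k})$ in terms of the operation $\odot$ on $A/ann(p^{2k})$, the addition, and the section $\rho^{-1}$, by exploiting the defining identity of $\odot$ together with Property~$2$. Recall from Theorem~\ref{odot} that $[x]\odot[y]=[\wp^{-1}((p^{k}x)*y)]$, where $\wp^{-1}=(\rho^{-1})^{k}$. First I would apply $\odot$ with $x$ replaced by a suitable ``$p^{k}$-th root'' of $a$ in the $\circ$-sense: by Property~$2$ and Lemma~\ref{j}, the map $[a]\mapsto[f(a)]$, where $p^{k}f(a)=a^{\circ p^{k}}$, is a bijection on $A/ann(p^{2k})$, with inverse $[a]\mapsto[g(a)]$ from Lemma~\ref{g(a)}. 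The point of applying $f$ and $g$ is to pass between $a$ and $a^{\circ p^{k}}$ modulo $ann(p^{2k})$, since $\odot$ naturally involves $p^{k}x$, and $p^{k}$ applied to a $\circ$-power is what Property~$2$ controls.

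The key computation is to compute $[g(a)]\odot[b]=[\wp^{-1}((p^{k}g(a))*b)]$ and relate $p^{k}g(a)$ to $a$ itself modulo a suitable annihilator; more precisely I would use that $p^{k}(g(a))^{\circ p^{k}}=(p^{k}g(a))^{\circ\cdots}$ via property~3 of pseudobraces and that $[f(g(a))]=[a]$ gives $p^{k}f(g(a))=(g(a))^{\circ p^{k}}$ with $f(g(a))\equiv a \pmod{ann(p^{2k})}$, so $(g(a))^{\circ p^{k}}\equiv p^{k}a\pmod{ann(p^{k})}$. Then one expands $((g(a))^{\circ p^{k}})*b = (g(a))^{\circ p^{k}}\circ b - (g(a))^{\circ p^{k}} - b$ using the iterated formula $a^{\circ i}\circ b = a\circ(a\circ\cdots\circ(a\circ b))$, and peels off the binomial-type expansion of $\circ$-powers acting on $b$. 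Here Property~$1$ enters crucially: the high iterated $*$-products $a*(a*(\cdots *a*b))$ with up to roughly $\frac{p-1}{4}$ factors land in $pA$ (and the analogous ones on $ann(p^{i})$ land in $ann(p^{i-1})$), which is exactly what is needed to control the tail of the binomial expansion of $a^{\circ p^{k}}$ acting on $b$ and to show that, modulo $ann(p^{2k})$, only finitely many ``low-order'' terms survive, each of which is a $\odot$-product or an iterated sum of such. Combined with Lemma~\ref{33} (which says $\wp^{-1}$ is additive modulo the annihilator, so $[\,m\wp^{-1}(a)\,]=[\wp^{-1}(ma)]$), this lets me rewrite the whole expression as a fixed algebraic combination of $[a]$, $[b]$, $\odot$, $+$, and $\rho^{-1}$.

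The final structural claim — that the order of operations depends only on the additive group of $A$ and not on $a$, $b$, or the fine structure of $*$, and that it is independent of the choice of $\rho^{-1}$ — I would obtain by inspecting the derivation and noting that the only inputs used are: the cardinality $p^{n}$ (which fixes the exponent $p^{n}!$ appearing in $g$, hence fixes how many times $f$, i.e.\ ``divide the $\circ$-power by $p^{k}$'', is iterated), the decomposition of the additive group (which controls the $p$-adic valuations, hence how many binomial terms survive modulo $ann(p^{2k})$), and the identities of a pseudobrace. Independence of $\rho^{-1}$ follows because any two choices of $\rho^{-1}$ differ by an element of $ann(p)$, so any two choices of $\wp^{-1}=(\rho^{-1})^{k}$ differ by an element of $ann(p^{k})\subseteq ann(p^{2k})$, whence $\odot$ and all derived expressions are unchanged in $A/ann(p^{2k})$; Lemma~\ref{33} is again the tool that makes this precise.

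\textbf{Main obstacle.} The hard part will be the bookkeeping of the binomial expansion of $a^{\circ p^{k}}$ acting on $b$: one must show that every term involving more than about $\frac{p-1}{4}$ iterated $*$-factors is absorbed into $ann(p^{2k})$ (using Property~$1$ and the hypotheses on $p$-adic valuations), so that the surviving expression is genuinely finite and genuinely a formula in $\odot$, $+$, $\rho^{-1}$; and then to verify that the combinatorial shape of that surviving formula — the ``order of applying these operations'' — is a function of the additive group alone. Making precise why Property~$1$'s bound $\lfloor\frac{p-1}{4}\rfloor$ suffices (rather than, say, $\frac{p-1}{2}$) is the delicate quantitative point, and tracking how the annihilator levels $ann(p^{i})$ drop as one applies iterated $*$ is where the estimate must be done carefully.
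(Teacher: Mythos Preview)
Your plan has the right high-level ingredients (the bijection $[a]\mapsto[f(a)]$ and its inverse $g$, the expansion of $a^{\circ p^{k}}*b$, the role of Property~1), but the central computation does not do what you need. You propose to study $[g(a)]\odot[b]=[\wp^{-1}((p^{k}g(a))*b)]$ and to use $(g(a))^{\circ p^{k}}\equiv p^{k}a\pmod{ann(p^{k})}$; but $\odot$ involves the \emph{additive} multiple $p^{k}g(a)$, while what you control is the \emph{multiplicative} power $(g(a))^{\circ p^{k}}$. Expanding $((g(a))^{\circ p^{k}})*b$ yields something congruent to $(p^{k}a)*b$, which just recovers (roughly) $p^{k}\cdot([a]\odot[b])$ --- it does not produce $[a]*[b]$. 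More importantly, your claim that ``every term involving more than about $\frac{p-1}{4}$ iterated $*$-factors is absorbed into $ann(p^{2k})$'' is not what Property~1 gives: Property~1 says such iterates land in $pA$ (and push $ann(p^{i})$ into $ann(p^{i-1})$), not that they vanish modulo $ann(p^{2k})$. The expansion of $a^{\circ p^{k}}*b$ does \emph{not} truncate; rather, every term must be rewritten.

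What the paper actually does is insert an intermediate operation. From the binomial expansion $a^{\circ p^{k}}*b=\sum_{j}\binom{p^{k}}{j}e_{j}'(a,b)$ (with $e_{j}'(a,b)=a*(a*(\cdots*b))$, $j$ times), one uses Property~1 not to kill terms but to make $\rho^{-1}$ applicable: for $p^{s}\,\|\,j$ one defines a modified $e_{j}(a,b)$ using $\rho^{-1}$ so that $p^{s}e_{j}(a,b)=e_{j}'(a,b)$, and since $p^{k-s}\,\|\,\binom{p^{k}}{j}$ one can write $a^{\circ p^{k}}*b=p^{k}\sum_{j}\sigma_{j}e_{j}(a,b)=:p^{k}(a\star b)$. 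This new operation $\star$ satisfies $[f(a)]\odot[b]=[a\star b]$. The heart of the argument is then a two-step inversion: first express $a*b=e_{1}'(a,b)$ in terms of the $\star$-iterates $\tilde e_{i}'(a,b)$ (via a filtration $E_{i}^{t}$ by ``depth and number of $\rho^{-1}$-applications'' and an inductive comparison showing $\tilde e_{i}'\equiv e_{i}'$ modulo higher-order terms), and second pass from $[\tilde e_{i}'(a,b)]$ to the $\odot$-iterates $\bar e_{i}'([f(a)],[b])$. Only at the very end is $g$ used, to replace $[f(a)]$ by an expression in $[a]$ built from $\odot$, $+$, $\rho^{-1}$. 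Without the intermediate $\star$ and the filtration bookkeeping, there is no mechanism to turn the full (non-truncating) binomial expansion into a formula in $\odot$.
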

\begin{proof}  We will present an outline of the proof of Theorem \ref{71}. A detailed proof  is presented in the next Section \ref{next}.
 The outline of the proof:
\begin{enumerate}
\item  We introduce elements $e_{i}(a,b)$, ${\tilde e}_{i}(a,b)$, ${\bar e}(a,b)$ for $i=0,1,2, \ldots $ and related elements $e_{i}(a,b)$, ${\tilde e}_{i}(a,b)$, ${\bar e}(a,b)$. Let $a,b\in A$ then $e_{1}(a,b)'=a*b$, $e_{2}(a,b)=a*(a*b)$, etc.   
\item Next we introduce operation $\star $ such that $p^{k}(a\star b)=(a^{\circ p^{k}})*b$. Next we define ${\tilde e}_{1}'(a,b)=a\star b$, ${\tilde e}_{2}'(a,b)=a\star (a\star b)$, etc. 
\item Next we introduce operation $\odot $ such that $p^{2k}([a]\odot [b])=[p^{k}a]*[p^{k}b]$. Next we define  ${\bar e}_{1}'([a], [b])=[a]\star [b]$, ${\tilde e}_{2}'([a],[b])=[a]\star  ([a]\star [b])$, etc.
Recall that for $a\in a$ $[a]=\{a+r:r\in ann(p^{2k})\}$ denote elements of brace $A/ann(p^{2k})$.
\item We then express elements $e_{i}(a,b)$ using elements ${\tilde e}_{i}(a,b)$. Next we will express elements $[{\tilde e}_{i}(a,b)]$ using elements ${\bar e}_{i}([a],[b]).$ 
 Consequently element $e_{1}(a,b)=a*b$ can be expressed using elements $[a]\odot [b]$, which gives the conclusion.
\end{enumerate}
\end{proof}
\section{Proof of Theorem \ref{71}}\label{next}
\subsection{Elements $e_{i}(a,b)$}\label{2222}

 Let $A$ be a pseudobrace of cardinality $p^{n}$ for some prime number $p$ and for a natural number $n$. Fix a natural number $k$.

 Let $\wp ^{-1}:p^{k}A\rightarrow A$ be defined as in Section \ref{pullback}, so $\wp^{-1}:p^{k}A\rightarrow A$ is such that $p^{k}\wp^{-1}(x)=x$ for every $x\in p^{k}A$.
 We will assume that 
\[\wp^{-1}(a)=(\rho ^{-1} )^{k}(a)=\rho^{-1}( \cdots (\rho^{-1}(a)\cdots )),\] where $\rho ^{-1}:pA\rightarrow A$ is a given ``pullback'' function $\rho ^{-1}:pA\rightarrow A$ such that $p\rho^{-1}x=x$, for $x\in pA$.
 Observe that
\[\rho ^{-1}(a+b)-\rho^{-1}(a)-\rho ^{-1}(b)\in ann(p),\]
for all $a, b\in A$.

We start with the definition of elements $e_{i}'(a,b)$, $e_{i}(a,b)$ for $a,b\in A$. 
\begin{definition}\label{defi}
 Let $A$ be a pseudobrace of cardinality $p^{n}$ for some prime number $p>3$ and for a natural number $n$. Assume that $A$ satisfies Property $1$.
Let $a,b\in A$. Denote $e_{0}'(a,b)=b$.
  Define $e_{1}'(a,b)=a*b$ and inductively define \[e_{i+1}'(a,b)=a*e_{i}'(a,b).\]

Define \[e_{j}(a, b)= e_{j}'(a, b)\] for $j$ not divisible by $p$. 

Define  \[e_{0}(a,b)=e_{\frac {p-1}2}(a, \rho^{-1}(b))\] for $b\in pA$.
 
Let $j$ be divisible by $p$ and let $s$ be maximal number such that $p^{s}$ divides $j$.  For $j\geq p$ define
\[e_{j}(a, b)=d_{s}(a, e_{j-{\frac{(p-1)s}2}}'(a,b))\] where $d_{1}(a,b)=e_{0}(a,b)$ and inductively it is defined as $d_{i+1}(a,b)=e_{0}(a, d_{i}(a,b))$.
 Notice that $j-{\frac{(p-1)s}2}\geq {\frac {(p+1)s}2}$, therefore $e_{j-{\frac{(p-1)s}2}}'(a,b)\in pA$ (by the property $1$). 
Consequently if $j>0$ then $e_{j}'(a,b)$ is well defined for all $a,b\in A$.

 Let $\upharpoonleft $ be a symbol, we denote $e_{1}'(a, 1)=a$ and $e_{i+1}'(a,\upharpoonleft )=a*e_{i}'(a, \upharpoonleft )$. We then define elements $e_{j}(a,\upharpoonleft )$ as above.
Denote $e_{j}(a)=e_{j}(a, \upharpoonleft )$.  
\end{definition}

\begin{lemma}\label{e_{0}}  Let $A$ be a pseudobrace of cardinality $p^{n}$ for some prime number $p>5$ and for a natural number $n$. Assume that $A$ satisfies Property $1$, then 

 \[e_{\frac{p-3}2}(a, A)\subseteq pA, \] 
and \[ e_{\frac{p-3}2}(a, ann(p))=0.\]
 Moreover 
 \[ e_{0}(a,b)= e_{\frac {p-3}2}(a, \rho^{-1}(a*b)).\]
 Moreover, $e_{\frac{p-3}2}(a, \upharpoonleft )\subseteq pA$.
\end{lemma}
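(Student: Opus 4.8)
The plan is to work directly from Definition \ref{defi} and the hypothesis that $A$ satisfies Property~$1$, which (since $p>5$) gives that $a*(a*(\cdots *a*b))\in pA$ and $a*(a*(\cdots *a*ann(p^i)))\in ann(p^{i-1})$ whenever $a$ appears $\lfloor\frac{p-1}{4}\rfloor$ times. The first task is to show $e_{\frac{p-3}{2}}'(a,b)=a*(a*(\cdots *a*b))$ with $a$ appearing $\frac{p-3}{2}$ times is already covered by Property~$1$: indeed $\frac{p-3}{2}\ge \lfloor\frac{p-1}{4}\rfloor$ for $p>5$, so applying Property~$1$ to the first $\lfloor\frac{p-1}{4}\rfloor$ factors already lands us in $pA$, and the remaining factors $a*(\cdots)$ preserve $pA$ because $a*(pA)\subseteq p(a*A)\subseteq pA$ by left-distributivity $a*(b+c)=a*b+a*c$. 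An identical argument with $ann(p)$ in place of $A$: Property~$1$(2) with $i=1$ gives $a*(a*(\cdots*a*ann(p)))\subseteq ann(p^0)=0$ once $a$ appears $\lfloor\frac{p-1}{4}\rfloor$ times, so $e_{\frac{p-3}{2}}'(a,ann(p))=0$. Since $\frac{p-3}{2}$ is a valid index (one must check $\frac{p-3}{2}$ is not divisible by $p$, which is automatic as $0<\frac{p-3}{2}<p$), we have $e_{\frac{p-3}{2}}(a,b)=e_{\frac{p-3}{2}}'(a,b)$, giving the first two displayed claims, and likewise $e_{\frac{p-3}{2}}(a,\upharpoonleft)\subseteq pA$ by the same reasoning applied to the $\upharpoonleft$-variant.

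**The identity relating $e_0$ and $e_{\frac{p-3}{2}}$.**
For the third claim I would unwind the definition: $e_0(a,b)=e_{\frac{p-1}{2}}(a,\rho^{-1}(b))$ for $b\in pA$, and since $0<\frac{p-1}{2}<p$ this equals $e_{\frac{p-1}{2}}'(a,\rho^{-1}(b))$. Now $e_{\frac{p-1}{2}}'(a,\rho^{-1}(b)) = a*\big(a*(\cdots*a*\rho^{-1}(b))\big)$ with $a$ appearing $\frac{p-1}{2}$ times $= e_{\frac{p-3}{2}}'\!\big(a,\, a*\rho^{-1}(b)\big)$ by peeling off the outer $\frac{p-3}{2}$ copies of $a$ — that is, $e_{\frac{p-1}{2}}'(a,x)=e_{\frac{p-3}{2}}'(a, a*x)$ directly from the recursion $e_{i+1}'(a,x)=a*e_i'(a,x)$. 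Substituting $x=\rho^{-1}(b)$ gives $e_0(a,b)=e_{\frac{p-3}{2}}'(a,\, a*\rho^{-1}(b))$. The remaining point is that $a*\rho^{-1}(b)\in pA$: this holds because $p\cdot\rho^{-1}(b)=b$, and $a*(pA)\subseteq pA$ as above, but more precisely $a*\rho^{-1}(b)$ need not literally equal $\rho^{-1}(a*b)$; however $e_{\frac{p-3}{2}}'$ only sees $a*\rho^{-1}(b)$ up to the ambiguity absorbed by the $ann(p)$-vanishing we just proved. Concretely, $a*\rho^{-1}(b)$ and $\rho^{-1}(a*b)$ differ by an element $t$ with $pt = p\,a*\rho^{-1}(b) - a*b = a*(p\rho^{-1}(b)) - a*b = a*b-a*b=0$, so $t\in ann(p)$; then $e_{\frac{p-3}{2}}'(a, a*\rho^{-1}(b)) = e_{\frac{p-3}{2}}'(a,\rho^{-1}(a*b)+t)=e_{\frac{p-3}{2}}'(a,\rho^{-1}(a*b)) + e_{\frac{p-3}{2}}'(a,t)$ by left-distributivity of $*$ (propagated through all $\frac{p-3}{2}$ applications), and the second summand is $0$ by the vanishing on $ann(p)$. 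Hence $e_0(a,b)=e_{\frac{p-3}{2}}(a,\rho^{-1}(a*b))$, as claimed, with the right-hand side using $e_{\frac{p-3}{2}}=e_{\frac{p-3}{2}}'$ on the index range involved.

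**Expected obstacle.**
The routine part is the $pA$- and $ann(p)$-membership bookkeeping; the slightly delicate part is managing the non-uniqueness of $\rho^{-1}$ and checking that the $ann(p)$-correction term $t$ really is annihilated through $\frac{p-3}{2}$ iterated $*$-applications rather than just one — this is where one genuinely needs Property~$1$(2) (not merely distributivity), and where one must be careful that $\frac{p-3}{2}\ge\lfloor\frac{p-1}{4}\rfloor$ so the hypothesis applies with room to spare. The condition $p>5$ is used precisely so that $\frac{p-3}{2}$ and $\frac{p-1}{2}$ are strictly positive and strictly below $p$ (ensuring the ``$j$ not divisible by $p$'' clause of Definition \ref{defi} applies and $e_j=e_j'$), and so that $\lfloor\frac{p-1}{4}\rfloor\le \frac{p-3}{2}$; I would state these numeric inequalities explicitly at the start of the proof to justify every appeal to Property~$1$.
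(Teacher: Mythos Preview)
Your proof is correct and follows essentially the same approach as the paper's one-line argument, just fully expanded: both rest on the inequality $\frac{p-3}{2}\ge\lfloor\frac{p-1}{4}\rfloor$ (the paper's printed ``$<$'' is a typo) together with the observation that $a*\rho^{-1}(b)-\rho^{-1}(a*b)\in ann(p)$. Your explicit handling of the $ann(p)$-correction via iterated distributivity, and the check that $0<\frac{p-3}{2}<p$ so that $e_{\frac{p-3}{2}}=e_{\frac{p-3}{2}}'$, are exactly the details the paper elides.
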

\begin{proof} It immediately follows from property $1$ since ${\frac{p-3}2}< \lfloor {\frac {p-1}4}\rfloor$ and since $a\rho^{-1}(b)-\rho ^{-1}(a*b)\in ann(p)$.
\end{proof}

$ $ 

\begin{remark}\label{a*a} Let $A$ be a pseudobrace of cardinality $p^{n}$ for some prime number $p>5$ and for a natural number $n$.  Assume that  $A$ satisfies  Property $1$. 
Notice that $e_{0}(a,b)$  does not depend of the choice of function 
$\rho ^{-1}$.
 Indeed, by the definition , if $\rho '^{-1}$  and $\rho ''^{-1}$ are two functions satisfting properties of the function $\rho ^{-1}$ then for
 $b\in pA$ we have $\rho '^{-1}(b)-\rho ''^{-1}(b)\in ann(p)$. The result now follows from Lemma \ref{e_{0}}.

Observe also that  \[ e_{0}(a,b)+e_{0}(a,c)=e_{0}(a, b+c),\]
 for all $a\in A$ and all $b,c\in pA$, since $\rho ^{-1}(b+c)-\rho^{-1}(b)-\rho ^{-1}(c)\in ann(p)$. 
 Consequently, for each $i>0$ we have
\[e_{i}(a, b+c)=e_{i}(a,b)+e_{i}(a,c)\] for all $a\in A$, $b,c\in A\cup \{ \upharpoonleft \}$. 
\end{remark}
\begin{definition}   Let $A$ be a pseudobrace of cardinality $p^{n}$ for some prime number $p>5$ and for a natural number $n$.  Assume that  $A$ satisfies  Property $1$. Let $a\in A$, $b\in A\cup \{ \upharpoonleft \}$. 
 Define ${ C}_{i}^{t}(a,b)\subseteq A$ as follows:
\begin{itemize}
\item 
 $C_{i}^{0}(a,b)$ consists of element ${ e}_{i}'(a,b)$ for $i=1,2, \ldots .$
\end{itemize}

 Let $j>0$ then sets $C_{i}^{j}(a,b)$ are only defined for $i\geq {\frac {p-3}2}$. We define $C_{i}^{j}(a,b)$ be the smallest subsets of $A$ satysfying:
\begin{itemize}
\item  If $i\geq {\frac {p-3}2}$ then ${e}_{0}(a ,C_{i}^{j}(a,b))\in C_{i}^{j+1}(a,b)$.
 \item If $l\geq 0$ then  ${ e}_{l}'(a,C_{i}^{j}(a,b))\in C_{i+l}^{j}(a,b)$.
\end{itemize}
\end{definition}

 \begin{lemma}\label{wazny}
 Let $A$ be a pseudobrace of cardinality $p^{n}$ for some prime number $p>5$ and for a natural number $n$.  Assume that  $A$ satisfies  Property $1$. Then the following holds:
\begin{enumerate}
\item $C_{t}^{k}(a,b)\subseteq e_{k+t}'(a,A).$
\item  For $j$ divisible by $p$ let $s_{j}$ be the largest integer such that $p^{s_{j}}$ divides $j$. Then $p^{k-s_{j}}$ divides 
${{p^{k}}\choose j}$ since ${{p^{k}}\choose j}={{p^{k}-1}\choose {j-1}}
\cdot {\frac {p^{k}}j}.$  Let $\sigma _{j}$ be integers such that  ${{p^{k}}\choose j}=p^{k-s_{j}}\sigma _{j}.$ 
  Then $p^{s_{j}}e_{j}(a,b)=e_{j}'(a,b)$, and $p^{s_{j}}e_{j}(a)=e_{j}'(a)$.

 Moreover $ a^{\circ p^{k}}*b=p^{k}\sum_{j=1}^{p^{k}}\sigma _{j}e_{j}(a,b)$ and 
 \[a^{\circ p^{k}}=p^{k}\sum_{j=1}^{p^{k}}\sigma _{j}e_{j}(a).\]
\end{enumerate}
 \end{lemma}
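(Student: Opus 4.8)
The plan is to prove Lemma~\ref{wazny} in two stages, corresponding to its two items, working entirely inside the pseudobrace $A$ and using Property~$1$ to control divisibility by $p$.

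\textbf{Item (1): the containment $C_{t}^{k}(a,b)\subseteq e_{k+t}'(a,A)$.} I would argue by induction on $k$, the number of times the $e_{0}$-operation has been applied. For the base case $k=0$ the set $C_{t}^{0}(a,b)$ is by definition $\{e_{t}'(a,b)\}$, which lies in $e_{t}'(a,A)$ since $b\in A$ (or, in the $\upharpoonleft$ case, one unwinds the convention $e_{1}'(a,\upharpoonleft)=a$ to land back in $A$ after one application of $*$). For the inductive step I would examine the two closure rules defining $C_{i}^{j}$. The rule $e_{l}'(a,C_{i}^{j}(a,b))\subseteq C_{i+l}^{j}(a,b)$ preserves the desired property because if $c\in e_{k+i}'(a,A)$, say $c=e_{k+i}'(a,x)$, then $e_{l}'(a,c)=e_{k+i+l}'(a,x)\in e_{k+(i+l)}'(a,A)$; this is just associativity of iterating $a*(-)$. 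For the rule $e_{0}(a,C_{i}^{j}(a,b))\subseteq C_{i}^{j+1}(a,b)$ (valid when $i\ge\frac{p-3}2$), I would invoke Lemma~\ref{e_{0}}: for $c=e_{k+i}'(a,x)$ we have by Property~$1$ that $c\in pA$ once $k+i$ is large enough, and $e_{0}(a,c)=e_{\frac{p-3}2}(a,\rho^{-1}(a*c))=e_{\frac{p-3}2}\bigl(a,\rho^{-1}(e_{k+i+1}'(a,x))\bigr)$, which one checks equals $e_{k+i}'(a,\rho^{-1}(e_{1}'(a,x')))$-type expression lying in $e_{(k+1)+i}'(a,A)$ after accounting for the $\frac{p-1}2$ shift built into the definition of $e_{0}$ and the $\rho^{-1}$; the point is that $\rho^{-1}$ raises the $p$-exponent by one and $e_{\frac{p-3}2}$ then consumes it, so the net effect on the index is exactly $+1$. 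I would write this step carefully, since the bookkeeping between the formal index $j$ (counting $e_0$'s) and the actual subscript $i$ is the place where an off-by-one is easiest to make.

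\textbf{Item (2): the binomial identities and the formula for $a^{\circ p^{k}}*b$.} The identity ${p^{k}\choose j}={p^{k}-1\choose j-1}\cdot\frac{p^{k}}{j}$ is Pascal's absorption identity and is purely arithmetic; from it, writing $j=p^{s_j}m$ with $p\nmid m$, one sees $\frac{p^k}{j}=\frac{p^{k-s_j}}{m}$ has $p$-adic valuation $k-s_j$, so $p^{k-s_j}\mid{p^k\choose j}$ and $\sigma_j:={p^k\choose j}/p^{k-s_j}$ is an integer. The relation $p^{s_j}e_j(a,b)=e_j'(a,b)$ is really the definition of $e_j(a,b)$ unwound: each application of $e_0$ to pass from $e_{j'}'$ to $e_{j'}$ divides by one power of $p$ (because $e_0$ is built from $\rho^{-1}$), and the number of such divisions needed to define $e_j$ from $e_j'$ is exactly $s_j$ — this is visible in Definition~\ref{defi} from the term $e_{j-\frac{(p-1)s}2}'(a,b)$ together with the $s$-fold iteration $d_s$, each $d$ contributing one $\rho^{-1}$. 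Finally, for the main formula, I would expand $a^{\circ p^{k}}*b$ using the brace identity $(x\circ y)*b = x*b + y*b + x*(y*b)$ iterated $p^k$ times, which yields $a^{\circ p^k}*b=\sum_{j=1}^{p^k}{p^k\choose j}e_j'(a,b)$ — this is the standard "binomial expansion" for the $\circ$-power acting on an element, provable by induction on $k$ using property~(5) of pseudobraces (left distributivity) and property~(3) (the $a^{\circ i}\circ a^{\circ j}=a^{\circ i+j}$ law). Substituting $e_j'(a,b)=p^{s_j}e_j(a,b)$ and ${p^k\choose j}=p^{k-s_j}\sigma_j$ gives ${p^k\choose j}e_j'(a,b)=p^{k-s_j}\sigma_j\cdot p^{s_j}e_j(a,b)=p^k\sigma_j e_j(a,b)$, whence $a^{\circ p^k}*b=p^k\sum_{j=1}^{p^k}\sigma_j e_j(a,b)$; the $\upharpoonleft$-version follows by the same computation with $b$ replaced by the symbol $\upharpoonleft$.

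\textbf{Main obstacle.} The genuinely delicate part is verifying that the two index systems match up: that "applying $e_0$ once" corresponds exactly to "dividing by $p$ once and shifting the subscript by $\frac{p-1}2$", and hence that assembling $e_j$ from $e_j'$ requires precisely $s_j$ applications — no more, no fewer — so that $p^{s_j}e_j(a,b)=e_j'(a,b)$ holds on the nose. This requires unwinding Definition~\ref{defi} (the clause $e_j(a,b)=d_s(a,e_{j-\frac{(p-1)s}2}'(a,b))$ with $d_{i+1}=e_0(a,d_i(-))$) against the valuation $s_j$ of the binomial coefficient, and using Property~$1$ (via Lemma~\ref{e_{0}}) to guarantee that at each stage the element we are applying $\rho^{-1}$ to genuinely lies in $pA$ so that the division is legitimate and well-defined up to $ann(p)$, which is then killed by the subsequent $e_{\frac{p-3}2}$. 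Everything else — the binomial expansion of $a^{\circ p^k}*b$ and Pascal's identity — is routine induction and arithmetic.
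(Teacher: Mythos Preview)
Your proposal is correct and follows essentially the same approach as the paper: for item~(1), induction on the closure rules defining $C_t^k(a,b)$, with the key step for the $e_0$-rule being that $\rho^{-1}$ commutes with iterates $e_l'(a,\cdot)$ modulo $ann(p)$, which is then killed by the outer $e_{\frac{p-1}2}(a,\cdot)$; for item~(2), the direct unwinding of Definition~\ref{defi} to get $p^{s_j}e_j=e_j'$ together with the binomial expansion $a^{\circ p^k}*b=\sum_{j}{p^k\choose j}e_j'(a,b)$ obtained from pseudobrace axioms~(3) and~(5). The paper's treatment of the $e_0$-step in~(1) is written out a bit more explicitly than yours (it displays the exact element of $ann(p)$ arising), and conversely you give more detail than the paper on why $p^{s_j}e_j=e_j'$ holds, but the arguments are the same.
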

\begin{proof} (1) By induction on $t$ (and for a given $t$ we will use induction on $k$). If $t<{\frac {p-3}2}$ then $k=0$ (as otherwise $C_{t}^{k}(a,b)$ is not defined).
 Note that $C_{t}^{0}(a,b)=\{e_{t}'(a,b)\}$ for any $t\geq 1$, so the result holds.
 If $k>0$ and $c\in C_{t}^{k}(a,b)$ then either  $c\in e_{l}'(a, C_{t-l}^{k}(a,b))$ for some $l>0$ or $c\in e_{0}(a, C_{t}^{k-1}(a,b))$ for some $t\geq {\frac {p-3}2}$.
 By the inductive assumption \[e_{l}'(a, C_{t-l}^{k}(a,b))\subseteq e_{l}'(a, e_{t-l+k}'(a, A))=e_{t+k}'(a,A)\] as required.
 
 By the inductive assumption \[e_{0}(a, C_{t}^{k-1}(a,b))\subseteq e_{\frac {p-1}2}(a, \rho ^{-1}(e_{t+k-1}'(a,A)))=e_{\frac {p-1}2}(a, e_{t+k-{\frac {p-1}2}}'(a, \rho ^{-1}(e_{\frac {p-3}2}'(a, A))))\] 
 where the last inclusion follows from the fact that 
\[\rho ^{-1}(e_{t+k-1}'(a,c))-e_{t+k-{\frac {p-1}2}}'(a, \rho ^{-1}(e_{\frac {p-3}2}'(a,b))\in ann(p)\] and 
 $e_{\frac {p-1}2}(a, ann(p))= 0$ (by Lemma \ref{e_{0}}).
  Consequently $e_{0}(a, C_{t}^{k-1}(a,b))\subseteq e_{t+k}'(a, A)$.
$ $

(2)
 Observe that  
 $p^{s_{j}}e_{j}(a,b)=e_{j}'(a,b)$ implies $p^{k}\sigma_{j}e_{j}(a,b)={{p^{k}}\choose j}e_{j}'(a,b) .$
 
 Recall that in any brace and pseudobrace $a^{\circ p^{k}}=a\circ (a\circ( \cdots a\circ a)))$, and so
$a^{\circ p^{k}}=\sum_{j=1}^{p^{k}}{{p^{k}}\choose j} e_{j}'(a)  =p^{k}\sum_{j=1}^{p^{k}}\sigma _{j}e_{j}(a)$.
  Moreover, by the definition in any pseudobrace we have
$a^{\circ p^{k}}\circ b=a\circ (a\circ( \cdots a\circ b)))$ and this implies  $ a^{\circ p^{k}}*b=p^{k}\sum_{j=1}^{p^{k}}\sigma _{j}e_{j}(a,b)$. 
 \end{proof}
\subsection{ Elements ${\tilde e}_{i}(a,b)$}\label{3333}
In this section we introduce function $f:A\rightarrow A$ and 
operation $\star $.

\begin{definition}\label{diam} Let notation be as in Lemma \ref{wazny}. Let $a, b\in A$.  We define 
\[f(a)=\sum_{j=1}^{p^{k}}\sigma _{j}e_{j}(a).\]

 We also denote: 
\[a\star b=\sum_{j=1}^{p^{k}}\sigma _{j}e_{j}(a,b).\]

Note that 
$a \star (b+c)=a\star b +a \star c $ (it follows from the Remark \ref{a*a}).
\end{definition}

 We will now define analogons of elements $e_{i}'(a,b)$ and $e_{i}(a,b)$ but we will use operation $\star $ instead of operation $* $ to define them.

\begin{definition} Let assumptions be as in Lemma \ref{wazny}.
Define ${\tilde e}'_{1}(a,b)=a\star b$, and inductively  ${\tilde e}'_{i+1}(a,b)=a\star {\tilde e}'_{i}(a,b)$
 and similarly, ${\tilde e}'_{1}(a)=a$, and inductively  ${\tilde e}'_{i+1}(a)=a\star {\tilde e}'_{i}(a)$.
\end{definition}

\begin{remark}\label{3} Let assumptions be as in Lemma \ref{wazny} then  $a\star A\subseteq a*A$ by definition of $\star $.
 Moreover, by Lemma \ref{wazny} (1) and definition of $\star $ we have 
\[a\star e_{i}'(a,A)\subseteq e_{i+1}'(a, A)\] for each $i>0$. Hence, 
\[{\tilde e}_{j}'(a, e_{i}'(a,A))\subseteq e_{i+j}'(a, A)\] for each $i,j>0$.
 Consequently ${\tilde e}'_{i}(a,b)\in e'_{i}(a,A)$ for all $i\geq 1$. Since $e_{\frac{p-3}2}(a,A)\in pA$ then 
 \[{\tilde e}_{\frac {p-3}2}(a,b)\in pA.\]
 Observe also that \[{\tilde e}'_{i}(a,{\tilde e}'_{j}(a,b))={\tilde e}'_{i+j}(a,b)\] for $i,j>0$.

Note that  by the last part of Remark \ref{a*a}
\[{\tilde e}'_{i}(a,b+c)={\tilde e}'_{i}(a,b)+{\tilde e}'_{i}(a,c),\]
 for all $i>0$ and all $a,b,c\in A$.
 \end{remark}

Next we define   elements ${\tilde e}_{j}(a,b)$ and elements ${\tilde e}_{j}(a)$. 

\begin{definition}\label{xx} Let assumptions be as in Lemma \ref{wazny}. 
Let $a, b\in A$. Define 
\[{\tilde e}_{j}(a, b)= {\tilde e}_{j}'(a, b)\]  for $j$ not divisible by $p$.

For $b\in pA$ define  \[{\tilde e}_{0}(a,b)={\tilde e}_{\frac {p-1}2}(a, \rho^{-1}(b)).\]

Let $j\geq p$ be divisible by $p$ and let $s$ be maximal number such that $p^{s}$ divides $j$, the  define
\[{\tilde e}_{j}(a, b)={\tilde d}_{s}(a,  {\tilde e}_{j-{\frac{(p-1)s}2}}'(a,b) )\] where ${\tilde d}_{1}(a,b)={\tilde e}_{0}(a,b)$ and inductively it is defined as ${\tilde d}_{j+1}(a,b)={\tilde e}_{0}(a, {\tilde d}_{j}(a,b))$.
 Notice that $j-{\frac{(p-1)s}2}\geq {\frac {(p+1)s}2}$, therefore ${\tilde e}_{j-{\frac{(p-1)s}2}}(a,b)\in pA$. 
Consequently if $j>0$ then ${\tilde e}_{j}(a,b)$ is well defined for all $a,b\in A$.

 Let $\upharpoonleft $ be a symbol. We denote ${\tilde e}_{1}'(a, 1)=a$ and ${\tilde e}'_{i+1}(a,\upharpoonleft )=a \star {\tilde e}'_{i}(a, \upharpoonleft )$. We then define elements ${\tilde e}_{j}(a,\upharpoonleft )$ as above.
Denote ${\tilde e}_{j}(a)={\tilde e}_{j}(a, \upharpoonleft )$.  
\end{definition}

\begin{remark}\label{yyy} Let assumptions be as in Lemma \ref{wazny}. By Remark \ref{3} and then by Lemma 
\ref{e_{0}}    we have

\[{\tilde e}_{\frac {p-3}2}(a, A)\subseteq {e}_{\frac {p-3}2}(a,ann(p))=0.\]

 Observe that this implies that  ${\tilde e}_{0}(a,b)$ does not depend of the choice of function $\rho ^{-1}$ and 
\[{\tilde e}_{0}(a,b+c)={\tilde e}_{0}(a,b)+{\tilde e}_{0}(a,c),\]
for all $a\in A$, $b,c\in pA$. 
\end{remark}
\begin{lemma}\label{2}
 Let assumptions be as in Lemma \ref{wazny}.  Let $a\in A, b\in pA$. If $i\geq {\frac {p-3}2}$ then  
  \[{\tilde e}_{0}(a,b)={\tilde e}_{\frac {p-3}2}(a, \rho^{-1}(a\star b)).\]
 Moreover  
\[{\tilde e}_{0}(a, e_{i}'(a, A))\subseteq e_{i+1}'(a,A).\] 

\end{lemma}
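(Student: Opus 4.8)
The plan is to mirror, at the level of the $\star$-calculus, exactly the identities that were already established for the $*$-calculus in Lemma \ref{e_{0}} and Remark \ref{a*a}, using the fact (Remark \ref{3}) that $a\star c\in a*A$ and more precisely $a\star e_i'(a,A)\subseteq e_{i+1}'(a,A)$, together with the vanishing statement $\tilde e_{\frac{p-3}2}(a,A)=0$ from Remark \ref{yyy}. First I would unwind the definition: for $b\in pA$ we have ${\tilde e}_0(a,b)={\tilde e}_{\frac{p-1}2}(a,\rho^{-1}(b))={\tilde e}_{\frac{p-3}2}\bigl(a,\,{\tilde e}_1'(a,\rho^{-1}(b))\bigr)={\tilde e}_{\frac{p-3}2}\bigl(a,\,a\star\rho^{-1}(b)\bigr)$, using the composition identity ${\tilde e}_i'(a,{\tilde e}_j'(a,c))={\tilde e}_{i+j}'(a,c)$ from Remark \ref{3} with $i=\frac{p-3}2$, $j=1$.

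Next I would compare $a\star\rho^{-1}(b)$ with $\rho^{-1}(a\star b)$. Since $\star$ is additive in the second argument (Definition \ref{diam}) and $\rho^{-1}(x+y)-\rho^{-1}(x)-\rho^{-1}(y)\in ann(p)$, and since $p\cdot\rho^{-1}(b)-b=0$ forces $p\cdot(a\star\rho^{-1}(b))=a\star b$, one gets $a\star\rho^{-1}(b)-\rho^{-1}(a\star b)\in ann(p)$ — this is the $\star$-analogue of the observation $a\rho^{-1}(b)-\rho^{-1}(a*b)\in ann(p)$ used in Lemma \ref{e_{0}}. Feeding this into ${\tilde e}_{\frac{p-3}2}(a,-)$ and invoking ${\tilde e}_{\frac{p-3}2}(a,ann(p))=0$ (which is contained in ${\tilde e}_{\frac{p-3}2}(a,A)=0$ from Remark \ref{yyy}) together with additivity of ${\tilde e}_{\frac{p-3}2}(a,-)$ in its second argument, we conclude ${\tilde e}_{\frac{p-3}2}(a,a\star\rho^{-1}(b))={\tilde e}_{\frac{p-3}2}(a,\rho^{-1}(a\star b))$, which is the first claimed identity. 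Here the hypothesis $i\geq\frac{p-3}2$ enters only to guarantee that all the ${\tilde e}$-symbols in sight are defined; for the displayed formula itself $i$ plays no active role beyond that.

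For the second assertion, ${\tilde e}_0(a,e_i'(a,A))\subseteq e_{i+1}'(a,A)$, I would argue as follows. For $c\in e_i'(a,A)$ with $i\geq\frac{p-3}2$, Property $1$ gives $c\in pA$ (since $e_{\frac{p-3}2}(a,A)\subseteq pA$ by Lemma \ref{e_{0}} and $e_i'(a,A)\subseteq e_{\frac{p-3}2}'(a,A)$ for $i\geq\frac{p-3}2$), so ${\tilde e}_0(a,c)$ is defined and equals ${\tilde e}_{\frac{p-1}2}(a,\rho^{-1}(c))$. Now $\rho^{-1}(c)$ satisfies $p\rho^{-1}(c)=c\in e_i'(a,A)$; using Lemma \ref{wazny}(1) (which tells us elements of the $C$-sets land in $e_{k+t}'(a,A)$) and the fact that $\rho^{-1}(e_{i}'(a,c'))$ differs from an element of $e_{i-\frac{p-1}2}'(a,\rho^{-1}(e_{\frac{p-1}2-1}'(a,c')))$-type form by an element of $ann(p)$ — exactly the bookkeeping done in the proof of Lemma \ref{wazny}(1) — one rewrites ${\tilde e}_{\frac{p-1}2}(a,\rho^{-1}(c))$ as lying in ${\tilde e}_{\frac{p-1}2}'(a,e_{i-\frac{p-1}2+1}'(a,\rho^{-1}(e_{\frac{p-3}2}'(a,A))))$ modulo the image of $ann(p)$ under ${\tilde e}_{\frac{p-1}2}(a,-)$, and the latter vanishes because ${\tilde e}_{\frac{p-3}2}(a,A)=0$ (note $\frac{p-1}2>\frac{p-3}2$). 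Then Remark \ref{3}, which gives ${\tilde e}_j'(a,e_\ell'(a,A))\subseteq e_{j+\ell}'(a,A)$, places the result in $e_{i+1}'(a,A)$.

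The main obstacle will be the second part: one has to be careful that when $i$ is divisible by $p$ the symbol $e_i'(a,A)$ refers to the primed (unnormalised) element and that the $ann(p)$-discrepancies accumulated from replacing $\star$ by $*$, from the nonlinearity of $\rho^{-1}$, and from the $d$-iteration in the definition of ${\tilde e}_j$ all get killed by a single application of ${\tilde e}_{\frac{p-3}2}(a,-)=0$ — this requires checking that $\frac{p-3}2<\lfloor\frac{p-1}4\rfloor$ is not actually needed here but rather the already-proven vanishing of ${\tilde e}_{\frac{p-3}2}(a,A)$, and that all intermediate indices stay $\geq\frac{p-3}2$ so the $\tilde e$-symbols are legitimate. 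Once the indexing is pinned down, each step is a direct translation of an identity already proved for the $*$-calculus.
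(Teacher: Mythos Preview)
Your argument is correct and follows the same route as the paper: unwind the definition $\tilde e_0(a,b)=\tilde e_{\frac{p-1}2}(a,\rho^{-1}(b))=\tilde e_{\frac{p-3}2}(a,a\star\rho^{-1}(b))$, use $a\star\rho^{-1}(b)-\rho^{-1}(a\star b)\in ann(p)$ together with $\tilde e_{\frac{p-3}2}(a,ann(p))=0$ for the first identity, and for the inclusion push $\rho^{-1}$ past enough copies of $e'$ modulo $ann(p)$ and then invoke $\tilde e_j'(a,e_\ell'(a,A))\subseteq e_{j+\ell}'(a,A)$ from Remark~\ref{3}. Two minor corrections: your index $i-\tfrac{p-1}2$ in the intermediate step should read $i-\tfrac{p-3}2$ (so that the inner and outer depths add back to $i$), and the statement ``$\tilde e_{\frac{p-3}2}(a,A)=0$'' you quote from Remark~\ref{yyy} is a typo in the paper --- only $\tilde e_{\frac{p-3}2}(a,ann(p))=0$ holds, but that is exactly what your argument actually uses, so nothing is lost.
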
 
\begin{proof} 
 Observe that 
  \[{\tilde e}_{0}(a,b)={\tilde e}_{\frac {p-3}2}(a, \rho^{-1}(a\star b))\] follows from the fact that 
  ${\tilde e}_{\frac {p-3}2}(a, ann(p))=0$ by Remark \ref{yyy}, and because 
\[\rho ^{-1}(a\star b)-a\star \rho ^{-1}(b)\in ann(p).\]
 Since $a\star pc=p(a\star c)$ by  Remark \ref{a*a}.

$ $
 
Let $i\geq {\frac {p-3}2}$. 
 By applying the fact that \[{\tilde e}_{0}(a,b)=a\star {\tilde e}_{\frac {p-3}2}(a, \rho^{-1}(b))={\tilde e}_{\frac {p-3}2}(a, \rho^{-1}(a\star b))\]
 several times we obtain 
 \[{\tilde e}_{0}(a, {\tilde e}_{i}'(a, b))={\tilde e}_{i+1}'(a, \rho ^{-1}({\tilde e}_{\frac {p-3}2}(a, b))).\] 
\end{proof}

\begin{definition}  Let assumptions be as in Lemma \ref{wazny}. We define set ${\tilde C}_{i}^{t}(a,b)\subseteq A$ as follows:
\begin{itemize}
\item  ${\tilde C}_{i}^{0}(a,b)$ consists of element ${\tilde e}_{i}'(a,b)$ for $i\geq 1$.
\end{itemize}

  Let $j>0$ then sets ${\tilde C}_{i}^{j}(a,b)$ are only defined for $i\geq {\frac {p-3}2}$. We define ${\tilde C}_{i}^{j}$ be the smallest subsets of $A$ satisfying:
\begin{itemize}
\item  If $a\in A$, $b\in {\tilde C}_{i}^{j-1}(a,b)$ then ${\tilde e}_{0}(a,b)\in {\tilde C}_{i}^{j}(a,b)$, for $i\geq {\frac {p-3}2}$.
 \item If $a\in A$, $b\in {\tilde C}_{i}^{j}(a,b)$ then ${\tilde e}_{l}'(a,b)\in {\tilde C}_{i+l}^{j}(a,b)$, for $l>0$.
\end{itemize}
\end{definition}

\begin{remark} Observe that by  Remark \ref{3} we have
${\tilde e}_{j}'(a, e_{i}'(a,A))\subseteq e_{i+j}'(a, A)$.
 By Lemma \ref{wazny} (1)  for $i\geq {\frac {p-3}2}$ we have 
\[{\tilde e}_{0}(a, e_{i}'(a,c))={\tilde e}_{\frac {p-1}2}(a, e_{i-{\frac {p-3}2}}'(a, \rho ^{-1}(e_{\frac {p-3}2}(a,c) )))   
\in  e_{i+1}'(a, A).\]  Consequently ${\tilde C}_{t}^{j}(a,b)\subseteq e_{j+t}'(a, A).$
\end{remark}

\subsection{ Connections between operations  $\star $ and $*$ }\label{31}

 In this section we will investigate some properties of the binary operation $\star$.
  We start with a definition:

\begin{definition} 
Let $a,b\in A$ and $i,t\geq 0$ be natural numbers. Define for $t>0$ 
 \[E_{i}^{t}(a,b)=\sum_{ i'\geq {\frac {p-3}2}, i'\geq i, t'\geq t} C_{i'}^{t'}(a,b).\]
 This means that $E_{i}^{t}(a,b)$ is the additive subgroup of $(A, +)$ generated by elements from sets $C_{i'}^{t'}(a,b)$  with $i'\geq {\frac {p-3}2}, i'\geq i, t'\geq t$.
And for $t=0$ and $i\geq 1$ define 
 \[E_{i}^{0}(a,b)=\sum_{i'\geq i, t'\geq 0} C_{i'}^{t'}(a,b).\]
 This means that $E_{i}^{0}(a,b)$ is the additive subgroup of $(A, +)$ generated by elements from sets $C_{i'}^{t'}(a,b)$  with $i'\geq i, t'\geq 0$.
 \end{definition}

 Let $A$ be a brace and $S\subseteq A$, $a\in A$ then we denote
\[a+S=\{a+s:s\in S\}.\]
 Our first lemma in this section is the following:
\begin{lemma}\label{prel7} Let assumptions be as in Lemma \ref{wazny}. 
 Let $i$ be a natural number. Then the following holds: 
 \[{\tilde e}'_{i}(a,b)\in e_{i}'(a,b)+ w_{i}(a,b)\]
where $w_{i}(a,b)\in  E_{i+1}^{0}(a,b)+E_{i}^{1}(a,b).$ Therefore $w_{i}(a,b)$ is obtained by applying operations $e_{j}(a, .)$ for $j\geq 0$ to element $b$ several times, and adding such elements.
The order of applying these operations  in the construction of the element $w_{i}(a,b)$ 
does  not depend on elements $a,b$ and it does not depend on properties of $*$, it only depends on the cardinality of $A$.
 Note that $e_{i}'(a,b)\in E_{i}^{0}(a,b)$. 
\end{lemma}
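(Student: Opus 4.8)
The plan is to prove the containment $\tilde e_i'(a,b)\in e_i'(a,b)+w_i(a,b)$ with $w_i(a,b)\in E_{i+1}^0(a,b)+E_i^1(a,b)$ by induction on $i$. The base case $i=1$ is essentially the definition of $\star$: by Definition \ref{diam} we have $\tilde e_1'(a,b)=a\star b=\sum_{j=1}^{p^k}\sigma_j e_j(a,b)$, and since $\sigma_1$ can be normalised so that the $j=1$ term is exactly $e_1'(a,b)=a*b$ (recall $e_1(a,b)=e_1'(a,b)$ as $1$ is not divisible by $p$), the remaining terms $\sum_{j\ge 2}\sigma_j e_j(a,b)$ land in $E_2^0(a,b)$ by Lemma \ref{wazny}(1) — each $e_j(a,b)$ with $j\ge 2$ sits inside some $C_{i'}^{t'}(a,b)$ with $i'+t'=j\ge 2$ and $i'\ge 1$, hence in $E_2^0(a,b)$ (and those with $j$ divisible by $p$ contribute to the $E_i^1$ part through the $e_0$-layers in their definition). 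So $w_1(a,b)\in E_2^0(a,b)+E_1^1(a,b)$ as claimed.

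For the inductive step, I would write $\tilde e_{i+1}'(a,b)=a\star\tilde e_i'(a,b)$ and substitute the inductive hypothesis $\tilde e_i'(a,b)=e_i'(a,b)+w_i(a,b)$. Using that $\star$ is additive in the second variable (Definition \ref{diam}), this gives $\tilde e_{i+1}'(a,b)=a\star e_i'(a,b)+a\star w_i(a,b)$. The first summand $a\star e_i'(a,b)=\sum_j\sigma_j e_j(a,e_i'(a,b))$; its $j=1$ term is $e_1'(a,e_i'(a,b))=a*e_i'(a,b)=e_{i+1}'(a,b)$, the principal term, while the terms $j\ge 2$ live in $e_{i+2}'(a,A)$ and more precisely in the sets $C_{i+j}^{t'}$ (and their $e_0$-descendants), hence in $E_{i+2}^0(a,b)+E_{i+1}^1(a,b)$ — this is exactly where Remark \ref{3} (giving $a\star e_i'(a,A)\subseteq e_{i+1}'(a,A)$) and Lemma \ref{wazny}(2) (controlling which $C$-set each $e_j$ falls into) are needed. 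For the second summand $a\star w_i(a,b)$: since $w_i(a,b)$ is a sum of elements of $C_{i'}^{t'}(a,b)$ with either $i'\ge i+1$ (and $t'\ge 0$) or $i'\ge\frac{p-3}2,\,i'\ge i,\,t'\ge 1$, and since applying $a\star(\cdot)$ sends $C_{i'}^{t'}$ into $C_{i'+1}^{t'}$-type sets plus higher (via the $e_l'$ and $e_0$ generation rules for $C$), we get $a\star w_i(a,b)\in E_{i+2}^0(a,b)+E_{i+1}^1(a,b)$. Adding the two pieces yields $w_{i+1}(a,b)\in E_{i+2}^0(a,b)+E_{i+1}^1(a,b)$.

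The statement about the order of operations being independent of $a,b$ and of the properties of $*$ follows by bookkeeping: at every stage the recipe for building $\tilde e_{i+1}'$ from $\tilde e_i'$ is "apply $a\star(\cdot)$", and $a\star(\cdot)=\sum_j\sigma_j e_j(a,\cdot)$ where the integers $\sigma_j$ depend only on $p$ and $k$ (through the binomial coefficients $\binom{p^k}{j}$), and each $e_j(a,\cdot)$ is itself a fixed composite of the operations $e_l'(a,\cdot)=a*(\cdots)$ and $e_0(a,\cdot)$ determined purely by the $p$-adic valuation of $j$, i.e.\ by arithmetic of $p^n=|A|$. Unwinding the induction, $w_i(a,b)$ is therefore a fixed word in the operations $e_j(a,\cdot)$, $j\ge 0$, applied to $b$, with the word depending only on $|A|$. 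I also note $e_i'(a,b)\in E_i^0(a,b)$ trivially since $e_i'(a,b)\in C_i^0(a,b)$.

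The main obstacle I expect is the careful indexing in the inductive step: one must verify that when $a\star(\cdot)$ (equivalently the operators $e_l'(a,\cdot)$ and $e_0(a,\cdot)$) is applied to an element of $C_{i'}^{t'}(a,b)$, the output lands in $E$-sets with the index shifts exactly matching $E_{i+2}^0+E_{i+1}^1$ — in particular tracking the $e_0$-contributions (which increment the upper index $t$ but, by the definition of $e_0$ via $e_{(p-1)/2}(a,\rho^{-1}(\cdot))$, also interact with the lower index $i$ through the threshold $i'\ge\frac{p-3}2$) requires Lemma \ref{wazny}(1), Lemma \ref{2}, and the generation rules for $C_i^t$ to be combined consistently. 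Everything else is routine additivity bookkeeping via Remark \ref{a*a} and Remark \ref{3}.
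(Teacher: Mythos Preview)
Your proposal is correct and follows essentially the same approach as the paper: induction on $i$, expanding $a\star(\cdot)=\sum_j\sigma_j e_j(a,\cdot)$, isolating the $j=1$ term as the principal term $e_{i+1}'(a,b)$, and showing the remaining terms fall into $E_{i+2}^0+E_{i+1}^1$ via a case split according to whether $j$ is divisible by $p$. The paper carries out that case split explicitly (its Cases 1--3), which is exactly the ``careful indexing'' you flag as the main obstacle; one small point is that $\sigma_1=1$ exactly (from $\binom{p^k}{1}=p^k$), so no normalisation is needed.
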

\begin{proof} We proceed by induction on $i$. For $i=1$ the result follows from Lemma \ref{wazny} since ${\tilde e}_{1}(a,b)=a\star b$.
 
Suppose that the result holds for some $i\geq 1$ then for $i+1$ we have 
\[{\tilde e}'_{i+1}(a,b)=a\star  {\tilde e}'_{i}(a,b)=\sum_{j=1}^{p^{k}}\sigma _{j}e_{j}(a,{\tilde e}'_{i}(a,b)).\]
 By the inductive assumption ${\tilde e}'_{i}(a,b)\in e_{i}'(a,b)+E_{i+1}^{0}(a,b) +E_{i}^{1}(a,b)$.
Therefore \[e_{j}(a,{\tilde e}'_{i}(a,b))\in e_{j}(a, {e}_{i}'(a,b))+ e_{j}(a, E_{i+1}^{0}(a,b))+e_{j}(a, E_{i}^{1}(a,b)).\]

$ $
 We proceed by induction on (j) (for this fixed $i$).

{\em Case $1$.} 
For $j=1$ we have $e_{j}=e_{1}$ so $e_{1}(a, {e}_{i}'(a,b))=a*{e}_{i}'(a,b)=e_{i+1}'(a,b)$ and 
  $e_{1}(a, E_{i+1}^{0}(a,b))=a*E_{i+1}^{0}(a,b)\subseteq E_{i+2}^{0}(a, b).$
 Moreover, $e_{1}(a, E_{i}^{1}(a,b))\subseteq  E_{i+1}^{1}(a, b).$

$ $

{\em Case $2$.} 
If $1<j<p$, or more generally if $j$ is not divisible by $p$,  by using similar reasoning, we get  $e_{j}=e_{j}'$, hence  $e_{j}(a, {e}_{i}'(a,b))=e_{j}'(a, {e}_{i}'(a,b))=e_{i+j}'(a,b)\in 
E_{i+2}^{0}(a,b)$ and $e_{j}(a, E_{i}^{1}(a,b))=e_{j}'(a,E_{i}^{1})(a,b)\subseteq E_{i+j}^{1}(a,b)\subseteq E_{i+2}^{1}(a,b)$.

$ $

{\em Case $3$.} Suppose now that $j>p-1$ and $j$ is divisible by $p$.
 Let $s$ be the largest integer such that $p^{s}$ divides $j$ then by Lemma \ref{wazny},
  $e_{j}(a, {e}_{i}'(a,b))=d_{s}(a, e_{j-{\frac {(p-1)s}2 }}'(a, e_{i}'(a,b)))$.

 We calculate  \[e_{j-{\frac {(p-1)s}2 }}'(a,e_{i}'(a,b))= e_{j-{\frac {(p-1)s}2 }+i}'(a,b)\subseteq E_{i+{\frac {p-1}2}}^{0}(a,b)\] since $j-{\frac {(p-1)s}2 }\geq{\frac {p+1}2}$.   Consequently, $e_{j}(a, {e}_{i}'(a,b))\subseteq d_{s}(a,E_{i+{\frac {p-1}2}}^{0}(a,b))\subseteq E_{i+1}^{1}(a,b)$.

Note that $e_{i}'(a,b), E_{i}^{1}(a,b), E_{i+1}^{0}(a,b)\subseteq E_{i}^{0}(a,b)$. Observe that  
 \[e_{j-{\frac {(p-1)s}2 }}'(a,E_{i}^{0}(a,b)) \subseteq E_{i+j-{\frac {(p-1)s}2 }}^{0}(a,b)\subseteq E_{i+{\frac {p+1}2}}^{0}(a,b).\] 

Therefore \[e_{j}(a, E_{i}^{0}(a,b))\subseteq d_{s}(a, E_{i+{\frac {p+1}2}}^{0}(a,b)) \subseteq E_{i+1}^{1}(a,b).\]
 
This concludes the proof. 
\end{proof}

\begin{lemma}\label{7useful} Let notation be as in Lemma \ref{prel7}. We assume $p> 3$. Let $i,t$ be natural numbers such that $t\geq 0$ and $i\geq {\frac {p-3}2}$.
 Let $a,b\in A$,  $c\in  C_{i}^{t}(a,b)$, then 
\[{\tilde e}_{0}(a,c)\in e_{0}(a, c)+w_{0}(a,b)\]
where $w_{0}(a,b)\in E_{i}^{t+2}(a,b)+E_{i+1}^{t+1}(a,b).$ Therefore $w_{0}(a,b)$ is obtained by applying operations $e_{j}(a, .)$ for $j\geq 0$ to element $b$ several times, and adding such elements.
The order of applying these operations  in the construction of the element $w_{0}(a,b)$ 
does  not depend on elements $a,b$ and it does not depend on properties of $*$, it only depends on the cardinality of $A$.
Note that $e_{0}(a,c)\in E_{i}^{t+1}(a,b)$.
\end{lemma}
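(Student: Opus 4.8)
The plan is to derive this from Lemma~\ref{prel7} applied at the index $\frac{p-1}{2}$. First note that $c\in pA$: by Lemma~\ref{wazny}(1), $c\in C_i^t(a,b)\subseteq e_{i+t}'(a,A)$, and since $i+t\geq\frac{p-3}{2}$ this lies in $pA$ by Lemma~\ref{e_{0}}. Hence $e_0(a,c)$ and $\tilde e_0(a,c)$ are defined, and because $\frac{p-1}{2}$ is not divisible by $p$ the defining formulas give $e_0(a,c)=e_{\frac{p-1}{2}}'(a,\rho^{-1}(c))$ and $\tilde e_0(a,c)=\tilde e_{\frac{p-1}{2}}'(a,\rho^{-1}(c))$. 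Applying Lemma~\ref{prel7} with index $\frac{p-1}{2}$ and second argument $\rho^{-1}(c)$ yields
\[\tilde e_0(a,c)=e_0(a,c)+w_{\frac{p-1}{2}}(a,\rho^{-1}(c)),\qquad w_{\frac{p-1}{2}}(a,\rho^{-1}(c))\in E_{\frac{p+1}{2}}^0(a,\rho^{-1}(c))+E_{\frac{p-1}{2}}^1(a,\rho^{-1}(c)),\]
and we put $w_0(a,b):=w_{\frac{p-1}{2}}(a,\rho^{-1}(c))$. The final assertion of the lemma is immediate, since $c\in C_i^t(a,b)$ and $i\geq\frac{p-3}{2}$ give $e_0(a,c)\in C_i^{t+1}(a,b)\subseteq E_i^{t+1}(a,b)$ by the definition of $C_i^{t+1}$.

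The substance is to show $w_0(a,b)\in E_i^{t+2}(a,b)+E_{i+1}^{t+1}(a,b)$. For this I would first record two commutation identities. Since $e_m'(a,ann(p))=0$ for all $m\geq\frac{p-3}{2}$ by Lemma~\ref{e_{0}}, one has, for $y\in pA$ and $l\geq 0$,
\[e_0(a,e_l'(a,y))=e_l'(a,e_0(a,y)),\]
because both sides equal $e_{\frac{p-1}{2}+l}'(a,\cdot)$ once one notes $\rho^{-1}(e_l'(a,y))-e_l'(a,\rho^{-1}(y))\in ann(p)$; and for $x\in pA$ and $j'\geq\frac{p-3}{2}$,
\[e_0(a,e_{j'}'(a,\rho^{-1}(x)))=e_{j'}'(a,\rho^{-1}(e_0(a,x))),\]
which follows by rewriting $e_0$ through the identity $e_0(a,b)=e_{\frac{p-3}{2}}'(a,\rho^{-1}(a*b))$ of Lemma~\ref{e_{0}} and again using that $e_{\frac{p-3}{2}}'(a,\cdot)$ annihilates $ann(p)$, so that $e'$‑operations may be pulled freely through $\rho^{-1}$. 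Iterating the second identity yields $e_0^{(m)}(a,e_{j'}'(a,\rho^{-1}(x)))=e_{j'}'(a,\rho^{-1}(e_0^{(m)}(a,x)))$ for $j'\geq\frac{p-3}{2}$ and $x\in pA$.

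Now every generator of $w_{\frac{p-1}{2}}(a,\rho^{-1}(c))$ is an element $d\in C_{i'}^{t'}(a,\rho^{-1}(c))$ with $t'=0$, $i'\geq\frac{p+1}{2}$, or with $t'\geq 1$, $i'\geq\frac{p-1}{2}$. If $t'=0$, then $d=e_{i'}'(a,\rho^{-1}(c))=e_{i'-\frac{p-1}{2}}'(a,e_0(a,c))\in C_{i+i'-\frac{p-1}{2}}^{t+1}(a,b)$, and $i'-\frac{p-1}{2}\geq 1$ places $d\in E_{i+1}^{t+1}(a,b)$. If $t'\geq 1$, express $d$ as a word of $t'$ applications of $e_0(a,\cdot)$ interspersed with $e_l'(a,\cdot)$'s applied to $\rho^{-1}(c)$, where the first $e_0$ acts only after the running index has reached some $l_0\geq\frac{p-3}{2}$; using the first commutation identity to gather all $e_0$'s next to $e_{l_0}'$ (legitimate because every term past the first $e_l'$ lies in $pA$) and then the iterated second identity,
\[d=e_M'(a,e_0^{(t')}(a,e_{l_0}'(a,\rho^{-1}(c))))=e_{i'}'(a,\rho^{-1}(e_0^{(t')}(a,c)))=e_{i'-\frac{p-1}{2}}'(a,e_0^{(t'+1)}(a,c)),\]
with $M=i'-l_0$, the last equality using $i'\geq\frac{p-1}{2}$. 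Since $e_0^{(t'+1)}(a,c)\in C_i^{t+t'+1}(a,b)$, this gives $d\in C_{i+i'-\frac{p-1}{2}}^{t+t'+1}(a,b)\subseteq E_i^{t+2}(a,b)$ (as $i'\geq\frac{p-1}{2}$ and $t'\geq 1$). Summing over generators gives the desired containment, and the order‑of‑operations claim follows from the corresponding claim in Lemma~\ref{prel7} together with the purely cardinality‑dependent rewriting just carried out, composed with the given recipe producing $c$ from $b$. The main obstacle is the bookkeeping in the case $t'\geq 1$, and especially the borderline value $l_0=\frac{p-3}{2}$: the argument survives it precisely because there one still has $i'\geq\frac{p-1}{2}$, so one factor $e_{\frac{p-1}{2}}'$ can be peeled off to turn $e_{i'}'(a,\rho^{-1}(e_0^{(t')}(a,c)))$ into a genuine $C$‑set element $e_{i'-\frac{p-1}{2}}'(a,e_0^{(t'+1)}(a,c))$ of $b$.
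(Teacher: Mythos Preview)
Your proof is correct and is organized rather differently from the paper's. The paper expands $\tilde e_{0}(a,c)=\tilde e_{\frac{p-3}{2}}'(a,\,a\star\rho^{-1}(c))$ by writing out $a\star\rho^{-1}(c)=\sum_{j}\sigma_{j}\,e_{j}(a,\rho^{-1}(c))$ and then does a case analysis on $j$ (namely $j>p-1$ not divisible by $p$; $j$ divisible by $p$; $1\le j\le p-1$), each time rewriting $e_{j}(a,\rho^{-1}(c))$ in terms of $e_{0}(a,c)$ and iterating on the residual $\tilde e_{\frac{p-3}{2}}'$. You instead invoke Lemma~\ref{prel7} at index $\tfrac{p-1}{2}$ with second argument $\rho^{-1}(c)$ as a black box, obtaining $w_{\frac{p-1}{2}}(a,\rho^{-1}(c))\in E_{\frac{p+1}{2}}^{0}(a,\rho^{-1}(c))+E_{\frac{p-1}{2}}^{1}(a,\rho^{-1}(c))$, and then translate generators $d\in C_{i'}^{t'}(a,\rho^{-1}(c))$ into $C$-set elements in the variable $b$ via the two commutation identities. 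This is more modular: the case split on $j$ is absorbed into the already-proved Lemma~\ref{prel7}, and the remaining work is a uniform coordinate change. The paper's route makes the contribution of each $\sigma_{j}e_{j}$ visible but essentially performs the same shuffles of $e_{0}$ and $\rho^{-1}$ that your two identities encode; the ``continuing in this way'' step in its Case~3 is exactly the recursion you have folded into the single application of Lemma~\ref{prel7}.

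One small remark on your bookkeeping in the case $t'\ge 1$: when you land in $C_{\,i+i'-\frac{p-1}{2}}^{\,t+t'+1}(a,b)$ you should note not only $i'\ge\tfrac{p-1}{2}$ but also that $i+i'-\tfrac{p-1}{2}\ge\tfrac{p-3}{2}$, which is needed for membership in $E_{i}^{t+2}(a,b)$ under its definition; this holds since $i\ge\tfrac{p-3}{2}$ and $i'\ge\tfrac{p-1}{2}$ give $i+i'\ge p-2$. Similarly in the $t'=0$ case, $i\ge\tfrac{p-3}{2}$ and $i'\ge\tfrac{p+1}{2}$ give $i+i'-\tfrac{p-1}{2}\ge\tfrac{p-1}{2}\ge\tfrac{p-3}{2}$. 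With these checks added the containment in $E_{i}^{t+2}(a,b)+E_{i+1}^{t+1}(a,b)$ is complete.
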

\begin{proof} Recall that \[{\tilde e}_{0}(a,c)={\tilde e}_{\frac {p-1}2}'(a, \rho ^{-1}(c)).\]
  We can apply Lemma \ref{wazny} to $d=\rho ^{-1}(c)\in \rho ^{-1} (C_{i}^{t}(a,b))$. Let $t>0$ then $i\geq {\frac {p-1}2}$ by assumptions.
Then we obtain
\[a\star\rho ^{-1}(c)=\sum_{j=1}^{p^{k}}\sigma _{j}e_{j}(a,\rho ^{-1}(c)).\]
 
Therefore \[{\tilde e}_{0}(a,c)={\tilde e}_{\frac{p-1}2}(a,\rho ^{-1}(c))={\tilde e}_{\frac{p-3}2}(a,a\star \rho ^{-1}(c))=\sum_{j=1}^{p^{k}}\sigma _{j}{\tilde e}_{\frac {p-3}2}'(a, e_{j}(a,\rho ^{-1}(c))).\]

{\em Case 1.} Let $j> p-1$ and $j$ not divisible by $p$, then \[e_{j}((a,\rho ^{-1}(c))=e_{j-{\frac {p-1}2}}'(a, e_{\frac{p-1}2}'(a,\rho^{-1}(c))=
e_{j-{\frac {p-1}2}}'(a, e_{0}(a,c))\subseteq E_{i+1}^{t+1}(a,b).\]

It follows by Lemma \ref{prel7} that for $j>{\frac {p-1}2}$ and $j$ not divisible by $p$ we have 
\[{\tilde e}_{\frac {p-1}2}(a, e_{j}(a, \rho ^{-1}(c))) \subseteq E_{i+1}^{t+1}(a,b). \]

$ $

{\em Case 2.}  Let $j$ be divisible by $p$ and $s$ be the largest number such that $p^{s}$ divides $j$,  then $j-{\frac {(p-1)s}2}\geq {\frac {(p-1)}2}+1$
 and consequently for some $i'\geq 1$ we have 
\[e_{j}(a, \rho^{-1}(c))=  d_{s}(a, e_{{\frac {(p-1)}2}+i'}'(a,\rho ^{-1}(c)))= d_{s}(a, e_{i'}'(a, e_{0}(a,c)))\in C_{i+i'}^{t+s+1 }(a,b)\subseteq E_{i+1}^{t+1}(a,b).\]   

Then we obtain: 
\[{\tilde e}_{\frac {p-3}2}(a, e_{j}(a, \rho ^{-1}(c))) \subseteq E_{i+1}^{t+1}(a,b). \]

$ $
{\em Case 3.} It remains to show that if  $1\leq j\leq   {p-1}$ then
  
\[{\tilde e}_{\frac {p-3}2}(a, e_{j}(a, \rho ^{-1}(c)))-e_{0}(a,c)\in  E_{i+1}^{t+1}(a,b)+E_{i}^{t+2}(a,b). \]

 By Lemma \ref{2}, for  $1\leq j\leq   {p-1}$  we have 
\[e_{p}(a, e_{j}(a, \rho ^{-1}(c)))=e_{j+1}(e_{0}(a, e_{0}(a,c)))\subseteq E_{i+j+1}^{t+2}(a, b)\subseteq E_{i+1}^{t+2}(a,b).\] 
 Moreover for $l>p$ and  divisible by $p$ similarly we have $e_{l}(a, e_{j}(a, \rho ^{-1}(c)))\subseteq E_{i+1}^{t+2}(a, b).$
And for $l\geq p-1$ and not divisible by $p$ we have 
\[e_{l}'(a, e_{j}(a, \rho ^{-1}(c))=e_{l+j}'(a, \rho ^{-1}(c))=e_{l+j-{\frac {p-1}2}}'(a, e_{0}(a,c))\subseteq E_{i+1}^{t+1}(a,b).\]

 Observe that for $0<l, j\leq p-1$ we have
\[e_{l}(a, e_{j}(a,  \rho ^{-1}(c)))=e_{j+l}'(a, \rho ^{-1}(c)).\]

 Continuing in this way we get for $1< j\leq p-1$
 \[{\tilde e}_{\frac {p-3}2}(a, e_{j}(a, \rho ^{-1}(c)))\in   E_{i+1}^{t+1}(a,b)+E_{i}^{t+2}(a,b). \]
 And for $j=1$ we get 
 \[{\tilde e}_{\frac {p-3}2}(a, e_{j}(a, \rho ^{-1}(c)))\in e_{0}(a,c)+  E_{i+1}^{t+1}(a,b)+E_{i}^{t+2}(a,b). \]
 
 $ $
 
 Conclusion: By Cases $1, 2, 3, 4$ we get
 \[{\tilde e}_{0}(a,c)={\tilde e}_{\frac {p-3}2}(a, a\star \rho ^{-1}(c))\subseteq e_{0}(a,c)+ E_{i+1}^{t+1}(a,b)+E_{i}^{t+2}(a,b).\] 
\end{proof}

\begin{definition} Let $c(a,b)\in {C}_{i}^{t}(a,b)$. We say that $c'(a,b)$ is the corresponding element to $c(a,b)$ when 
  ${\tilde c}(a,b)$ is obtained in the same way as $c(a,b)$ but using elements ${\tilde e}_{i}(a,b)$ in the construction instead of elements $e_{i}(a,b)$
 (in other words the only difference is that it uses operation $\star $ instead of operation $*$). Notice that ${\tilde c}(a,b)\in {\tilde C}_{i}^{t}(a,b).$
\end{definition}

 In the following results  the meaning of ''the presentation only depends on the cardinality of $A$'' is the same as in Lemma \ref{7useful}.  Observe that the formula for the construction does not depend on the additive group of  $A$ (as it depends only on the cardinality of $A$) but the result will depend on the additive group. Usually the result will be different  for braces with different additive groups (similarly as in the construction of the group of flows, which does not depend on the additive group, but the result depends on this group).  

\begin{lemma}\label{new7} Let notation be as in Lemma \ref{wazny}. Let $t\geq 0,i>0$ be natural numbers such that if $t>0$ then $i\geq {\frac {p-3}2}$. Let $p>3$ be a prime number.  
 Let $c(a,b)\in {C}_{i}^{t}(a,b)$ and ${\tilde c}(a,b)\in {\tilde C}_{i}^{t}(a,b)$  be the corresponding element to $c(a,b)$. Then,  
\[{\tilde c}(a,b)\in c(a,b)+E_{i+1}^{t}(a,b) +E_{i}^{t+1}(a,b)\] and this presentation
 only depends  on the cardinality of pseudobrace $A$ 
(and it does not depend on elements $a$ and $b$ as formulas
 from Lemmas \ref{prel7} and \ref{7useful} do not depend on elements $a,b$). 
\end{lemma}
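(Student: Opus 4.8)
The plan is to prove Lemma \ref{new7} by induction on the pair $(t,i)$, following the recursive structure of the sets $C_{i}^{t}(a,b)$. The two base cases are immediate from the lemmas already proven: when $t=0$ and $c(a,b)=e_{i}'(a,b)$ is the generator of $C_{i}^{0}(a,b)$, the corresponding element is ${\tilde c}(a,b)={\tilde e}_{i}'(a,b)$, and Lemma \ref{prel7} gives exactly ${\tilde e}_{i}'(a,b)\in e_{i}'(a,b)+E_{i+1}^{0}(a,b)+E_{i}^{1}(a,b)$ with a presentation independent of $a,b$. When $t>0$ and $c(a,b)=e_{0}(a,c_0(a,b))$ for some $c_0(a,b)\in C_{i}^{t-1}(a,b)$, one first applies the inductive hypothesis to $c_0$ and then invokes Lemma \ref{7useful}; this is the place where the "$+2$" shifts in the $t$-index from Lemma \ref{7useful} get absorbed into $E_{i}^{t+1}$, so I need to check the bookkeeping there carefully.

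First I would set up the induction cleanly. The ordering to induct on: primarily on $t$, and within a fixed $t$ on $i$ (the same double induction used for $C_{i}^{t}$). For the inductive step I split according to how $c(a,b)$ was produced as an element of $C_{i}^{t}(a,b)$: either $c(a,b)=e_{l}'(a,c_0(a,b))$ with $c_0(a,b)\in C_{i-l}^{t}(a,b)$ and $l>0$, or $c(a,b)=e_{0}(a,c_0(a,b))$ with $c_0(a,b)\in C_{i}^{t-1}(a,b)$ and $i\geq\frac{p-3}{2}$. In the first branch, the corresponding element is ${\tilde c}(a,b)={\tilde e}_{l}'(a,{\tilde c}_0(a,b))$; I would write ${\tilde c}_0(a,b)=c_0(a,b)+r$ with $r\in E_{i-l+1}^{t}(a,b)+E_{i-l}^{t+1}(a,b)$ by the inductive hypothesis, then use additivity of ${\tilde e}_{l}'(a,\cdot)$ (Remark \ref{3}) to get ${\tilde e}_{l}'(a,{\tilde c}_0(a,b))={\tilde e}_{l}'(a,c_0(a,b))+{\tilde e}_{l}'(a,r)$. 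Then I apply Lemma \ref{prel7}-type reasoning, or rather the already-established inclusion ${\tilde e}_{l}'(a,e_{m}'(a,A))\subseteq e_{m+l}'(a,A)$ from Remark \ref{3} together with ${\tilde e}_{l}'(a,c_0(a,b))\in c_0(a,b)\text{-related }e$-expressions, to conclude that the "error" lands in $E_{i+1}^{t}(a,b)+E_{i}^{t+1}(a,b)$. The key points are that $e$-operations raise the $i$-index by the amount applied and never lower the $t$-index, and that applying $e_l'$ to something in $E_{i-l+1}^{t}$ lands in $E_{i+1}^{t}$, while $E_{i-l}^{t+1}$ maps into $E_{i}^{t+1}$.

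In the second branch, $c(a,b)=e_{0}(a,c_0(a,b))$ with $c_0(a,b)\in C_{i}^{t-1}(a,b)$: here I first note that by the inductive hypothesis ${\tilde c}_0(a,b)=c_0(a,b)+r$ with $r\in E_{i+1}^{t-1}(a,b)+E_{i}^{t}(a,b)$, use additivity of ${\tilde e}_0(a,\cdot)$ on $pA$ (Remark \ref{yyy}) — after checking the relevant elements lie in $pA$, which follows from Property 1 since the $i$-index is at least $\frac{p-3}{2}$ — to split ${\tilde e}_{0}(a,{\tilde c}_0(a,b))={\tilde e}_{0}(a,c_0(a,b))+{\tilde e}_{0}(a,r)$. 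Then Lemma \ref{7useful} applied to $c_0(a,b)\in C_{i}^{t-1}(a,b)$ gives ${\tilde e}_{0}(a,c_0(a,b))\in e_{0}(a,c_0(a,b))+E_{i}^{t+1}(a,b)+E_{i+1}^{t}(a,b)$, which is exactly the shape required since $e_0(a,c_0(a,b))=c(a,b)$; note the index shift $(t-1)\mapsto t$ built into Lemma \ref{7useful}'s "$+2$"/"$+1$" structure is precisely what makes $E_{i}^{(t-1)+2}=E_{i}^{t+1}$ and $E_{i+1}^{(t-1)+1}=E_{i+1}^{t}$. Finally ${\tilde e}_{0}(a,r)$ with $r$ split into the two pieces must also land in $E_{i+1}^{t}+E_{i}^{t+1}$: applying ${\tilde e}_0$ raises the $i$-index by roughly $\frac{p-3}{2}\geq 1$ and the $t$-index by $1$ (using the inclusions from the Remark preceding Lemma \ref{7useful} together with the structure of ${\tilde C}$), so $E_{i+1}^{t-1}\mapsto E_{\ge i+1}^{t}$ and $E_{i}^{t}\mapsto E_{\ge i}^{t+1}$, both inside the target.

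The main obstacle I anticipate is the careful index accounting in the second branch: verifying that every auxiliary term generated by ${\tilde e}_0(a,\cdot)$ applied to an error term of the form $E_{i+1}^{t-1}+E_{i}^{t}$ really lands in $E_{i+1}^{t}+E_{i}^{t+1}$, and not in something with a smaller $t$-index. This requires pinning down exactly how ${\tilde e}_0$ acts on the $E$-filtration — which is essentially the content combining Lemma \ref{2}, Lemma \ref{7useful}, and the Remark about ${\tilde C}_{t}^{j}(a,b)\subseteq e_{j+t}'(a,A)$ — and then checking that the recursion terminates, i.e. that the error sets $E_{i+1}^{t}(a,b)+E_{i}^{t+1}(a,b)$ eventually vanish because $A^{n+1}=0$ forces $e_{m}'(a,A)=0$ for $m$ large and $ann(p^{j})$-arguments force termination in the $t$-direction. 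A secondary technical point that must be handled is confirming throughout that all the "$\rho^{-1}$ of $\cdots$" arguments are legitimate, i.e. the relevant elements really do lie in $pA$, which is exactly where Property 1 and the hypothesis $i\geq\frac{p-3}{2}$ get used, and ensuring the "presentation depends only on the cardinality of $A$" claim propagates — this is routine since all of Lemmas \ref{prel7}, \ref{7useful} already carry that clause and the induction only composes their formulas.
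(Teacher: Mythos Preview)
Your proposal is correct and follows essentially the same approach as the paper. The paper's own proof is a two-sentence sketch---it simply notes that elements of ${\tilde C}_i^t(a,b)$ are built recursively by applying ${\tilde e}_j'(a,\cdot)$ and ${\tilde e}_0(a,\cdot)$, and says the result follows by induction from Lemmas~\ref{prel7} and~\ref{7useful}; your writeup is a detailed expansion of exactly that induction, including the index bookkeeping the paper leaves to the reader.
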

\begin{proof} Note that sets ${\tilde C}_{i}^{t}(a,b)$ are obtained by applying operations ${\tilde e}_{j}'(a,d)$ for $d\in {\tilde C}_{i-j}^{t}(a,b)$ and also by applying operation  $\tilde e_{0}(a,d)$ for  ${\tilde C}_{i}^{t-1}(a,b)$. The result now follows by induction from Lemmas 
 \ref{prel7} and \ref{7useful}. 
\end{proof}
\begin{corollary}\label{tilde}
 Let notation be as in Lemma \ref{new7} then ${\tilde e}_{\frac {p-1}2}(a,b)\in pA$ for every $a,b\in A$ (by Remark \ref{3}).
\end{corollary}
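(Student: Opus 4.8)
The plan is to reduce the statement to Remark~\ref{3} together with bookkeeping of indices, exactly as the parenthetical hint suggests. First I would note that for the primes under consideration ($p>5$ in the standing assumptions) both $\frac{p-1}2$ and $\frac{p-3}2$ are integers strictly between $0$ and $p$, hence neither is divisible by $p$. By the definition of the tilde elements (Definition~\ref{xx}) this gives ${\tilde e}_{\frac{p-1}2}(a,b)={\tilde e}'_{\frac{p-1}2}(a,b)$ and ${\tilde e}_{\frac{p-3}2}(a,b)={\tilde e}'_{\frac{p-3}2}(a,b)$, and from the recursive definition of the primed elements, ${\tilde e}'_{\frac{p-1}2}(a,b)=a\star {\tilde e}'_{\frac{p-3}2}(a,b)$.

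Next I would invoke Remark~\ref{3}, which already records that ${\tilde e}'_{i}(a,b)\in e'_{i}(a,A)$ for all $i\geq 1$, and in particular that ${\tilde e}_{\frac{p-3}2}(a,b)\in pA$ — the latter because $e_{\frac{p-3}2}(a,A)\subseteq pA$ by Lemma~\ref{e_{0}} (using $\frac{p-3}2$ is not divisible by $p$, so $e_{\frac{p-3}2}=e'_{\frac{p-3}2}$ there). Thus I may write ${\tilde e}'_{\frac{p-3}2}(a,b)=pc$ for some $c\in A$.

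Finally I would use the additivity of $\star$ in its second argument recorded in Definition~\ref{diam} (equivalently, the axiom $a*(b+c)=a*b+a*c$ of a pseudobrace together with Remark~\ref{a*a}), which yields $a\star(pc)=p(a\star c)$. Hence ${\tilde e}_{\frac{p-1}2}(a,b)={\tilde e}'_{\frac{p-1}2}(a,b)=a\star {\tilde e}'_{\frac{p-3}2}(a,b)=p(a\star c)\in pA$, which is the claim. Alternatively, one can argue in one line directly through Remark~\ref{3}: ${\tilde e}'_{\frac{p-1}2}(a,b)\in e'_{\frac{p-1}2}(a,A)=a*e'_{\frac{p-3}2}(a,A)\subseteq a*pA\subseteq pA$.

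There is no genuine obstacle in this argument; it is essentially a corollary of Remark~\ref{3}. The only point that deserves attention is the index bookkeeping — the identity $\frac{p-1}2=\frac{p-3}2+1$ and the observation that both indices lie below $p$, so that the tilde elements with and without primes coincide at these indices, which is what lets us pass from ${\tilde e}_{\frac{p-3}2}(a,b)\in pA$ to ${\tilde e}_{\frac{p-1}2}(a,b)\in pA$ by one more application of $a\star(-)$.
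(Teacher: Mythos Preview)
Your proposal is correct and is essentially the same as the paper's argument, which consists solely of the parenthetical ``(by Remark~\ref{3})''. Your one-line alternative at the end is the closest match: Remark~\ref{3} gives ${\tilde e}'_{\frac{p-1}{2}}(a,b)\in e'_{\frac{p-1}{2}}(a,A)$, and since $e'_{\frac{p-3}{2}}(a,A)\subseteq pA$ by Lemma~\ref{e_{0}}, one more application of $a*(-)$ keeps this in $pA$; you have simply unpacked these steps in more detail than the paper does.
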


\begin{definition}
Let $a,b\in A$ and $i,t\geq 0$ be natural numbers. Define for $t>0$ 
 \[{\tilde E}_{i}^{t}(a,b)=\sum_{ i'\geq {\frac {p-3}2}, i'\geq i, t'\geq t} {\tilde C}_{i'}^{t'}(a,b).\]
 This means that ${\tilde E}_{i}^{t}(a,b)$ is the additive subgroup of $(A, +)$ generated by elements from sets ${\tilde C}_{i'}^{t'}(a,b)$  with $i'\geq {\frac {p-3}2}, i'\geq i, t'\geq t$.
And for $t=0$ and $i\geq 1$ define 
 \[{\tilde E}_{i}^{0}(a,b)=\sum_{i'\geq i, t'\geq 0} {\tilde C}_{i'}^{t'}(a,b).\]
 This means that ${\tilde E}_{i}^{0}(a,b)$ is the additive subgroup of $(A, +)$ generated by elements from sets ${\tilde C}_{i'}^{t'}(a,b)$  with $i'\geq i, t'\geq 0$ (only summing ${\tilde C}_{i'}^{t'}(a,b)$ which are defined, so if $i'>0$ then $t'\geq {\frac {p-3}2}$) .
 \end{definition}

\begin{lemma}\label{super7} Let notation be as in Lemma \ref{new7}, so $c(a,b)\in C_{i}^{t}(a,b)$. Then 
$c(a,b)\in {\tilde c}(a,b)+{\tilde E}_{i+1}^{t}(a,b)+{\tilde E}_{i}^{t+1}(a,b)$
 and this presentation only depends on the cardinality of of brace $A$ (it does not depend on elements $a$ and $b$ as formulas from Lemmas \ref{prel7} and \ref{7useful} do not depend of elements $a,b$). 
\end{lemma}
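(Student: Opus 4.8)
The plan is to deduce this from Lemma \ref{new7} by a formal inversion argument, exploiting the fact that the relation there is ``upper triangular'' with respect to the two indices $i$ and $t$. Lemma \ref{new7} tells us ${\tilde c}(a,b)\in c(a,b)+E_{i+1}^{t}(a,b)+E_{i}^{t+1}(a,b)$, with a presentation of the error term that depends only on the cardinality of $A$. I would first observe that by Corollary \ref{tilde} and Property $1$ the filtration $E_{i}^{t}(a,b)$ (and likewise ${\tilde E}_{i}^{t}(a,b)$) is eventually zero: once $i$ exceeds a bound depending only on $p$ and $n$, or once $t$ does, the corresponding sets lie in $p^{n}A=0$. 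Concretely, $e_{\frac{p-3}{2}}(a,A)\subseteq pA$ forces each application of $e_{0}(a,\cdot)$ (which raises $t$) to strip a factor of $p$, so $E_{i}^{t}(a,b)\subseteq p^{t}A$, and similarly $E_{i}^{0}(a,b)\subseteq A^{i}$ via the left nilpotency series; hence only finitely many pairs $(i,t)$ are relevant. This makes the inversion a finite process.

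Next I would set up the inversion itself. Fix $(i,t)$. Working by downward induction on $i+t$ (or a lexicographic order making ``higher'' sets come first), assume the statement of Lemma \ref{super7} already holds for every generator of $E_{i+1}^{t}(a,b)$ and of $E_{i}^{t+1}(a,b)$ — that is, every such $c'$ can be written as ${\tilde c}' + (\text{something in }{\tilde E}_{i+2}^{t}+{\tilde E}_{i+1}^{t+1}$ resp. ${\tilde E}_{i+1}^{t+1}+{\tilde E}_{i}^{t+2})$. Rearranging Lemma \ref{new7} gives $c(a,b)\in {\tilde c}(a,b) - \big(E_{i+1}^{t}(a,b)+E_{i}^{t+1}(a,b)\big)$; substituting the inductive descriptions of the elements of $E_{i+1}^{t}$ and $E_{i}^{t+1}$ in terms of tilded sets, and using that ${\tilde E}_{i+2}^{t},{\tilde E}_{i+1}^{t+1}\subseteq {\tilde E}_{i+1}^{t}$ and ${\tilde E}_{i+1}^{t+1},{\tilde E}_{i}^{t+2}\subseteq {\tilde E}_{i}^{t+1}$, everything collapses into ${\tilde c}(a,b)+{\tilde E}_{i+1}^{t}(a,b)+{\tilde E}_{i}^{t+1}(a,b)$, as required. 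The base of the induction is the vanishing observed in the first paragraph: for $(i,t)$ large the sets on both sides are $0$ and there is nothing to prove. Throughout, each rewriting step is an instance of a formula from Lemma \ref{prel7} or \ref{7useful}, whose shape depends only on $|A|$; composing finitely many such formulas (the number of compositions being bounded in terms of $p$ and $n$ only) yields a presentation of $c(a,b)$ in terms of ${\tilde c}(a,b)$ and tilded correction sets that again depends only on the cardinality of $A$.

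The main obstacle I anticipate is purely bookkeeping rather than conceptual: one must check that the well-foundedness of the induction is genuine, i.e. that the two-parameter filtration really is exhausted and that no circularity creeps in when a correction term for $C_{i}^{t}$ is expressed via sets that are ``smaller'' in the chosen order but that themselves were, one step earlier, expressed via $C_{i}^{t}$-type sets. The clean way around this is to fix once and for all the global bounds $I_0, T_0$ with $E_{i}^{t}(a,b)=0$ whenever $i\ge I_0$ or $t\ge T_0$, and to induct on the quantity $(I_0-i)+(T_0-t)$, so that the ``error'' sets invoked at stage $(i,t)$ always have strictly smaller value of this quantity. With that ordering in place the argument is a routine finite back-substitution, and the assertion that the resulting presentation depends only on $|A|$ is immediate since it is a fixed composite of the ($|A|$-dependent, $a,b$-independent) formulas of Lemmas \ref{prel7} and \ref{7useful}.
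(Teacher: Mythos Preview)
Your proposal is correct and follows essentially the same approach as the paper: invert the upper-triangular relation of Lemma \ref{new7} by repeated substitution, with termination coming from the fact that the error terms lie in sets $C_{i'}^{t'}$ of strictly higher order which eventually vanish by nilpotency. The only cosmetic differences are that the paper phrases the argument as an iterative forward substitution (apply Lemma \ref{new7} to the error term, then to the new error term, and so on) rather than as a downward induction, and justifies termination via $C_{i}^{t}(a,b)\subseteq e'_{i+t}(a,A)\subseteq A^{i+t}$ together with $A^{n+1}=0$, rather than via the separate observations $E_{i}^{t}\subseteq p^{t}A$ and $E_{i}^{0}\subseteq A^{i}$ that you use.
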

\begin{proof} We first introduce partial ordering on pairs of natural numbers. We say that  $(i,t)<(i',t')$ if $i\leq i'$, $t\leq i'$ and $i+t<i'+t'$ and that element $c(a,b)\in C_{i}^{t}(a,b)$ has order $(i,t)$.
 By Lemma \ref{new7}, we have  
\[{\tilde c}(a,b)\in c(a,b)+h(a,b),\] where $h(a,b)\in E_{i+1}^{t}(a,b) +E_{i}^{t+1}(a,b)$.
 Notice that $h(a,b)$ is a sum of elements with orders larger than $(i,t)$ (note that $c(a,b)$ has order $(i,t)$).
Hence we have $c(a,b)={\tilde c}(a,b)-h(a,b)$. Let $\tilde {h}(a,b)$ be the element corresponding to $h(a,b)$. By Lemma \ref{new7} we get that 
\[h(a,b)={\tilde h}(a,b)+g(a,b)\]
 where $g(a,b)$ is a sum of elements from $E_{i'}^{t'}(a,b)$ with each $(i',j')$ larger than  with orders larger than an order of a summand of $h(a,b)$. We obtain 
$c(a,b)={\tilde c}(a,b)-{\tilde h}(a,b)-g(a,b).$
Continuing in this way we increase  orders of the summands so this process will stop (since $C_{i}^{t}\in A^{i+t}$ as showed before and $A^{n}=0$).
 So after repeating such substitutions several times we will obtain the desired  conclusion.
\end{proof}

\begin{lemma}\label{6*} Let assumptions be as in Lemma \ref{super7}. Suppose that pseudobrace $A$ satisfies property $2$.
Then 
\[a\in f(a)+{\tilde E}_{1}^{0}(a, f(a)).\]
Moreover, this presentation only depends on the cardinality of of brace $A$ (it does not depend on element $a$). 
 \end{lemma}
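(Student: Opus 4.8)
The plan is to invert, inside the filtration machinery of Lemmas \ref{prel7}--\ref{super7}, the fact that $f(a)$ differs from $a$ only by higher-order terms. First I would unpack $f(a)$: by Definition \ref{diam} and Lemma \ref{wazny}(2), $f(a)=\sum_{j=1}^{p^{k}}\sigma_{j}e_{j}(a)$, and since $\binom{p^{k}}{1}=p^{k}$ forces $\sigma_{1}=1$ while $e_{1}(a)=e_{1}'(a,\upharpoonleft)=a$, this reads $f(a)=a+r(a)$ with $r(a)=\sum_{j=2}^{p^{k}}\sigma_{j}e_{j}(a)$. By Definition \ref{defi} each $e_{j}(a)$ with $j\geq 2$ lies in a set $C_{i_{j}}^{t_{j}}(a,\upharpoonleft)$ with $i_{j}\geq 1$ and $i_{j}+t_{j}\geq 2$ (indeed $i_{j}=j$ if $p\nmid j$, and $i_{j}=j-\tfrac{(p-1)s}{2}\geq\tfrac{p+1}{2}$ if $p^{s}$ exactly divides $j$), so Lemma \ref{super7} applied with base point $\upharpoonleft$ rewrites $r(a)$ as a fixed additive combination --- depending only on $|A|$ --- of elements of sets ${\tilde C}_{i'}^{t'}(a,\upharpoonleft)$ with $i'\geq 1$ and $i'+t'\geq 2$. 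Hence, since $a=f(a)-r(a)$, it suffices to prove that every element of such a set ${\tilde C}_{i}^{t}(a,\upharpoonleft)$ (with $i\geq 1$, $i+t\geq 2$) lies in ${\tilde E}_{1}^{0}(a,f(a))$ by a formula depending only on $|A|$.

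I would prove this by downward induction on the level $m=i+t$, from $m=n+1$ (where ${\tilde C}_{i}^{t}(a,\upharpoonleft)\subseteq e_{i+t}'(a,A)\subseteq A^{n+1}=0$ by Definition \ref{pseudobrace}(4)) down to $m=2$. Given ${\tilde c}(a,\upharpoonleft)\in{\tilde C}_{i}^{t}(a,\upharpoonleft)$ with $i+t=m$, one uses the commutation ${\tilde e}_{0}(a,{\tilde e}_{l}'(a,x))={\tilde e}_{l}'(a,{\tilde e}_{0}(a,x))$ (valid on $pA$, from Lemma \ref{2}) to write ${\tilde c}(a,\upharpoonleft)=\Psi\bigl({\tilde e}_{i_{0}}'(a,\upharpoonleft)\bigr)$, where $\Psi$ performs first all the maps ${\tilde e}_{l}'(a,\cdot)$ and then all the maps ${\tilde e}_{0}(a,\cdot)$ occurring in the construction of ${\tilde c}$, and $i_{0}\geq 2$ (for $t=0$ take $\Psi=\mathrm{id}$, $i_{0}=m$). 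Then one substitutes the innermost $a$: by Remark \ref{3}, ${\tilde e}_{i_{0}}'(a,\upharpoonleft)={\tilde e}_{i_{0}-1}'(a,a)={\tilde e}_{i_{0}-1}'(a,f(a))-{\tilde e}_{i_{0}-1}'(a,r(a))$, in which ${\tilde e}_{i_{0}-1}'(a,f(a))\in{\tilde C}_{i_{0}-1}^{0}(a,f(a))\subseteq{\tilde E}_{1}^{0}(a,f(a))$ and, by Lemma \ref{super7} again, ${\tilde e}_{i_{0}-1}'(a,r(a))$ is a fixed combination of ${\tilde C}(a,\upharpoonleft)$-elements of level $\geq i_{0}+1$. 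Applying $\Psi$ --- which acts additively here, because everything occurring at the relevant levels lies in $pA$ (the $f(a)$-term too, since ${\tilde e}_{i_{0}-1}'(a,f(a))={\tilde e}_{i_{0}}'(a,\upharpoonleft)+{\tilde e}_{i_{0}-1}'(a,r(a))$ is a sum of elements of $pA$) and ${\tilde e}_{0}(a,\cdot)$ is additive on $pA$ by Remark \ref{yyy} --- the $f(a)$-part is carried into ${\tilde E}_{1}^{0}(a,f(a))$ by the construction rules of ${\tilde C}(a,f(a))$ and Lemma \ref{super7} with base point $f(a)$, while the $r(a)$-part yields only ${\tilde C}(a,\upharpoonleft)$-elements of strictly larger level, handled by the inductive hypothesis. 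The recursion terminates because $A^{n+1}=0$, and since every step --- Lemmas \ref{prel7}, \ref{super7}, the substitution $a=f(a)-r(a)$, the rearrangement of $\Psi$ --- is a fixed rule independent of $a$, the final expression depends only on $|A|$.

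The step I expect to be the main obstacle is this filtration bookkeeping, specifically the fact that the substitution lowers the ``$i$-index'' of the $f(a)$-term by one: when $i_{0}=\tfrac{p-3}{2}$ (the least index for which sets ${\tilde C}$ with positive upper index are defined), $\Psi$ produces terms of the shape ${\tilde e}_{0}\bigl(a,{\tilde e}_{(p-5)/2}'(a,f(a))\bigr)$ that sit in no named box ${\tilde C}_{(p-5)/2}^{1}(a,f(a))$. One must check that such terms are still well defined --- they are, because ${\tilde e}_{(p-5)/2}'(a,f(a))={\tilde e}_{(p-3)/2}'(a,\upharpoonleft)+{\tilde e}_{(p-5)/2}'(a,r(a))\in pA$ by Lemma \ref{e_{0}} and Remark \ref{yyy} --- and that they too land in ${\tilde E}_{1}^{0}(a,f(a))$ modulo strictly-higher-level corrections, either by rewriting them through the identity ${\tilde e}_{0}(a,b)={\tilde e}_{(p-3)/2}(a,\rho^{-1}(a\star b))$ of Lemma \ref{2} (where $a\star b$ already has index $\tfrac{p-3}{2}$, hence lies in a genuine box), or, should a naive substitution loop, by using Property $2$ --- which via Lemmas \ref{j} and \ref{g(a)} makes $f$ bijective on $A/ann(p^{2k})$ --- to match the critical ${\tilde C}_{(p-3)/2}^{t}(a,\upharpoonleft)$-components with ${\tilde C}_{(p-3)/2}^{t}(a,f(a))$-components up to $ann(p^{2k})$, which is absorbed into the higher-level terms. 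Verifying this critical-level case, and checking throughout that all intermediate elements remain in $pA$ wherever $\rho^{-1}$ or additivity of ${\tilde e}_{0}$ is invoked, is the technical heart; the remainder is formal recursion on the nilpotency degree.
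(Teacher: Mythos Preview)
Your overall strategy --- invert $f(a)=a+r(a)$ by iterated substitution, using nilpotency to terminate --- is exactly the right idea, but you are making it much harder than necessary by carrying out the whole argument in the $\tilde E$/$\tilde C$ world. The paper does the substitution entirely with the \emph{untilded} sets $E_{i}^{t}(a,\cdot)$, and only invokes Lemma~\ref{super7} once, at the very end, to pass from $E_{1}^{0}(a,f(a))$ to $\tilde E_{1}^{0}(a,f(a))$. Concretely: from $f(a)=a+\sum_{j\ge 2}\sigma_{j}e_{j}(a)$ one has $a-f(a)=c(a,a)$ for a specific element $c(a,a)\in E_{1}^{0}(a,a)$; linearity of each $e_{j}(a,\cdot)$ in the second variable (Remark~\ref{a*a}) gives $c(a,a)=c(a,f(a))+c(a,c(a,a))$ with the second summand in $E_{2}^{0}(a,a)$; iterating raises the lower index until it exceeds $n$, so one lands in $a\in f(a)+E_{1}^{0}(a,f(a))$; then a single application of Lemma~\ref{super7} with $b=f(a)$ finishes.

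The advantage of this ordering is precisely that it sidesteps the ``main obstacle'' you flag. In the untilded setting the relevant operations $e_{j}'(a,\cdot)$ and $e_{0}(a,\cdot)$ are genuinely additive in the second slot and never force you below the threshold $i=\tfrac{p-3}{2}$, because you are not composing with the $\Psi$-rearrangement you describe; the index only goes up. By contrast, your route requires showing that $\Psi$ applied to $\tilde e_{i_{0}-1}'(a,f(a))$ lands in $\tilde E_{1}^{0}(a,f(a))$ even when $i_{0}-1<\tfrac{p-3}{2}$, and neither of your proposed fixes is convincing as stated: the identity from Lemma~\ref{2} shifts the index but reintroduces a $\rho^{-1}$ whose argument you must again control, and the appeal to Property~2 is off-target since the conclusion of the lemma is an exact containment in $A$, not a statement modulo $ann(p^{2k})$. (Indeed, Property~2 plays no role in the paper's proof of this lemma.) So your plan has a real gap at the critical-level case; the paper's reordering of ``substitute first, tilde last'' is what makes it disappear.
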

\begin{proof} Observe that $a=f(a)+c(a,a)$ for some $c(a,a)\in E_{1}^{0}(a,a)$, this follows from the definition of $f(a)$.
 Therefore, $a-f(a)=c(a,a)$.    
By substituting for the second variable $a$ in $c(a,a)$   the above expression on the right hand side 
 we get $c(a,a)=c(a, f(a))+c(a, a-f(a))=c(a, f(a))+c(a, c(a,a))$. Notice that $c(a, c(a,a))\in c(a, E_{1}^{0}(a,a))\subseteq E_{2}^{0}(a,a)$. Consequently, 
\[  a\in f(a)+  c(a, f(a))+ E_{2}^{0}(a, a).\]
 Therefore $a-f(a)-c(a, f(a))=c'(a,a)$ for some $c'(a,a)\in E_{2}^{0}(a,a)$. By applying the same reasoning to $c'(a,a)$ instead of $c(a,a)$ we obtain \[c'(a,a)=c'(a, f(a))+c'(a, a-f(a))=c'(a, f(a))+c'(a, c(a,a)).\] Consequently 
\[  a\in f(a)+  c(a, f(a))+ c'(a, f(a))+ E_{3}^{0}(a, a).\]

 Continuing to substitute in this way 
we obtain 
\[  a\in f(a)+E_{1}^{0}(a, f(a)),\]
 (since $C_{i}^{t}(a,a)\subseteq  A^{n+t}=0$ for $i+t>n$.) 
Now we can apply 
Theorem \ref{super7} for $b=f(a)$, this gives the desired conclusion. 
\end{proof}

\section{Relations between $*$ and $\odot $}

In this section we denote $[a]_{ann(p^{2k})}=[a]$ for $a\in A$.
In this section we introduce elements ${\bar e}([a],[b])$.

\begin{definition}\label{bar} Let assumptions be as in Lemma \ref{super7}.
Define ${\bar e}_{1}'([a],[b])=[a]\odot [b]$ and inductively  ${\bar e}_{i+1}'([a],[b])=[a]\odot {\bar e}_{i}'([a],[b])$. 

\end{definition}
 Recall that by the definition of a pseudobrace as in any pseudobrace $a^{\circ p^{k}}\circ b=a\circ (a\circ \cdots (a\circ b)))$ where $a$ appears $p^{k}$ times ion the right hand side. 
Observe that $p^{k}(a\star b)=a^{\circ p^{k}}* b$, and so 
\[[f(a)]\odot [b]=[a\star b].\]

\begin{remark}\label{bar2} Let assumptions be as in Lemma \ref{super7}. Suppose that pseudobrace $A$ satisfies property $2$. 
 Observe that \[{\bar e}_{i}'([f(a)],[b])=[{\tilde e}_{i}'(a,b)].\]
 It follows from Lemma \ref{super7} and from the fact that $[a\star b]=[f(a)]\odot [b]$ applied several times. Indeed, notice that  
 $[{\tilde e}_{2}'(a,b)]=[a\star (a\star b)]=[f(a)]\odot [a\star b]=[f(a)]\odot ([f(a)]\odot [b])={\bar e}_{2}'([a], [b])$, and continuing in this way we get that 
\[{\bar e}_{i+1}'([f(a)],[b])=[{\tilde e}_{i+1}'(a,b)].\]
 
Observe that the result  does not depend on the particular function $\star $ which was chosen ($\star $ can be chosen in several different ways as it depends on function $\rho ^{-1}$). So if we choose a different function $\rho ^{-1}$ and construct operation $\star $ using this $\rho ^{-1}$, the element $[{\tilde e}_{i}'(a,b)]$
 is the same.
 
Observe also that \[{\bar e}_{i}'([f(a)],[b]+[b'])={\bar e}_{i}'([f(a)],[b+b'])=[{\tilde e}_{i}'(a,b+b')]=[{\tilde e}_{i}'(a,b)]+[{\tilde e}_{i}'(a,b')]=\]
\[={\bar e}_{i}'([f(a)],[b])+{\bar e}_{i}'([f(a)],[b']).\]
\end{remark}
\begin{definition} \label{ba2}  Let assumptions be as in Lemma \ref{super7}. Let notation be as in Definition \ref{bar}. 
We  define for $b\in pA$, \[{\bar e}_{0}([a], [b])={\bar e}_{\frac {p-1}2}([a], [\rho ^{-1}(b)]),\]
 
\end{definition}
  Observe that  \[{\bar e}_{0}([f(a)],[b])=[{\tilde e}_{0}(a,b)]\] for each $b\in pA$.  It follows because by the first part of the proof  we have  \[{\bar e}_{i}'([f(a)],[c])=[{\tilde e}_{i}'(a,c)]\] for all $c\in A$, and we can take $c=\rho ^{-1}(b)$ and the result does not depend of the choice of function $\rho ^{-1}$ (since ${\tilde e}_{i}'$ does not depend on function $\rho ^{-1}$).

\begin{lemma}\label{p}  Let assumptions be as in Lemma \ref{super7}. Suppose that pseudobrace $A$ satisfies property $2$. 
 Let notation be as in  Definition \ref{bar},  then 
 \[{\bar e}_{\frac {p-3}2}'([a],[A])\subseteq p[A]\] and \[{\bar e}_{\frac {p-3}2}'([a],[ann(p^{2k-1})])=[0].\]
\end{lemma}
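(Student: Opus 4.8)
The strategy is to transfer the two facts already established about the ordinary elements $e_i$ (namely $e_{\frac{p-3}2}(a,A)\subseteq pA$ and $e_{\frac{p-3}2}(a,ann(p))=0$ from Lemma~\ref{e_{0}}) through the two layers of translation that have been built: first from $e_i$ to $\tilde e_i$ via Lemma~\ref{super7}, and then from $\tilde e_i$ to $\bar e_i$ via Remark~\ref{bar2}. The key point is that $\bar e_i'([f(a)],[b]) = [\tilde e_i'(a,b)]$, and by Lemma~\ref{6*} combined with property~$2$ the map $[a]\mapsto[f(a)]$ is a bijection on $A/ann(p^{2k})$, so writing $[a']=[f(a)]$ it suffices to prove the statement with $[a']$ replaced by $[f(a)]$ for arbitrary $a$, i.e.\ to show $[\tilde e_{\frac{p-3}2}'(a,b)]\in p[A]$ for all $b\in A$ and $[\tilde e_{\frac{p-3}2}'(a,b)]=[0]$ whenever $b\in ann(p^{2k-1})$.

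First I would handle the first containment. By Corollary~\ref{tilde} (or directly by Remark~\ref{3}, since $\tilde e_i'(a,b)\in e_i'(a,A)$ and $e_{\frac{p-3}2}(a,A)\subseteq pA$) we already have $\tilde e_{\frac{p-3}2}(a,b)\in pA$ for all $a,b\in A$; hence $[\tilde e_{\frac{p-3}2}'(a,b)]\in [pA]=p[A]$, which gives $\bar e_{\frac{p-3}2}'([f(a)],[b])\in p[A]$, and running $a$ over $A$ and using that $[a]\mapsto[f(a)]$ is onto $A/ann(p^{2k})$ yields $\bar e_{\frac{p-3}2}'([a],[A])\subseteq p[A]$ for all $[a]$.

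For the second equation I would argue as follows. Fix $b\in ann(p^{2k-1})$, so $p^{2k-1}b=0$; I want $\tilde e_{\frac{p-3}2}'(a,b)\in ann(p^{2k})$. By Remark~\ref{3} we have $\tilde e_i'(a,b)\in e_i'(a,A)$; more precisely, tracing the construction, $\tilde e_{\frac{p-3}2}'(a,b)$ lies in (an additive combination built from) $e_{\frac{p-3}2}'(a,b')$ with $b'$ obtained from $b$ by applying $a\star(-)$, which maps $ann(p^j)$ into $ann(p^j)$ (since $a\star(-)$ is built from $e_j(a,-)$ and property~$1$ guarantees $e_j(a,ann(p^i))\subseteq ann(p^{i-1})$ only up to a controlled loss; but here we only use $j$ not divisible by $p$, where $e_j'=e_j$ preserves the annihilator up to no loss, i.e.\ $a*ann(p^j)\subseteq ann(p^j)$ because $p^jA\,{*}\,$ stuff — more carefully, from the hypothesis $(p^iA)*ann(p^j)\subseteq ann(p^{j-i})$ in property~$2$ one gets $a*ann(p^j)\subseteq ann(p^j)$). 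Then since $b\in ann(p^{2k-1})$ and the operations preserve this, we reduce to $e_{\frac{p-3}2}(a,c)$ with $c\in ann(p^{2k-1})\subseteq ann(p)$ after multiplying by $p^{2k-2}$; invoking $e_{\frac{p-3}2}(a,ann(p))=0$ from Lemma~\ref{e_{0}} gives $p^{2k-2}\cdot(\text{stuff})$ contributions vanish, and the remaining terms lie in $ann(p^{2k})$. Passing to $A/ann(p^{2k})$: $[\tilde e_{\frac{p-3}2}'(a,b)]=[0]$, hence $\bar e_{\frac{p-3}2}'([f(a)],[\rho^{-1}(\cdots)])$ — no, directly $\bar e_{\frac{p-3}2}'([f(a)],[b])=[\tilde e_{\frac{p-3}2}'(a,b)]=[0]$, and surjectivity of $[a]\mapsto[f(a)]$ finishes it.

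\textbf{Main obstacle.} The delicate point is bookkeeping the annihilator exponents through the operation $\star$ and the elements $\tilde e_i$: one must verify that the only $e_j(a,-)$ that occur with $j$ divisible by $p$ (which could, via $e_0$ and the $d_s$, shift annihilator indices unfavourably) do not occur for indices up to $\frac{p-3}2$, so that on the range $i\le\frac{p-3}2$ the operation $\star$ genuinely preserves $ann(p^j)$; this is exactly the content of property~$1$ together with $\frac{p-3}2<\lfloor\frac{p-1}4\rfloor$... wait, that inequality is false, so in fact one should instead observe that $\tilde e_i'$ for $i\le\frac{p-3}2$ is built from $a\star(-)$ applied at most $\frac{p-3}2$ times, each $a\star(-)$ expands into $\sum\sigma_j e_j(a,-)$, and the terms with $j\ge p$ land in deeper $E$-filtration pieces (Lemma~\ref{wazny}(1)), so modulo higher filtration $\tilde e_{\frac{p-3}2}'(a,b)\equiv e_{\frac{p-3}2}'(a,b)+(\text{terms in }e_{\ge\frac{p-1}2}'(a,A))$, and both of those lie in $ann(p^{2k})$ when $b\in ann(p^{2k-1})$ by Lemma~\ref{e_{0}} applied after peeling off one factor of $p$. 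I would carry this out by induction on the construction exactly as in Lemmas~\ref{prel7}, \ref{7useful}, \ref{super7}, since those lemmas already package the filtration estimates in the form needed.
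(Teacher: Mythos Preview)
Your plan for the first inclusion is correct and matches the paper exactly: reduce via the bijection $[a]\mapsto[f(a)]$ (the paper writes this as taking $c=g(a)$ so that $[f(c)]=[a]$, using Lemma~\ref{g(a)}), then invoke $\tilde e_{\frac{p-3}2}'(c,b)\in pA$ from Remark~\ref{3}.

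For the second equation you have the right reduction but you overcomplicate the endgame and never land it. The paper's argument is a one-line observation: since $ann(p^{2k-1})\subseteq ann(p^{2k})$, it is enough that $\tilde e_{\frac{p-3}2}'(c,\,ann(p^{2k}))\subseteq ann(p^{2k})$, i.e.\ that $\star$ \emph{preserves} $ann(p^{2k})$. This is immediate once you check each summand $e_j(a,-)$ of $\star$: for $j$ not divisible by $p$ it is iterated $*$ and $ann(p^{2k})$ is an ideal; for $j$ divisible by $p$ the single $\rho^{-1}$ raises the annihilator index by one, but the $e_{\frac{p-1}2}'(a,-)$ inside $e_0$ then lowers it by at least one via Property~1, so $e_0(a,-)$ and hence $d_s(a,-)$ still preserve $ann(p^{2k})$. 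Then $[\tilde e_{\frac{p-3}2}'(c,b)]=[0]$ for $b\in ann(p^{2k-1})$, and the bijection finishes.

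Your detour through $e_{\frac{p-3}2}(a,ann(p))=0$, the (correctly flagged as false) inequality $\frac{p-3}2<\lfloor\frac{p-1}4\rfloor$, and the filtration estimates from Lemmas~\ref{prel7}--\ref{super7} is unnecessary: you do not need any \emph{strict} drop in the annihilator index, only preservation, and preservation needs none of that machinery. The ``main obstacle'' you identify is not an obstacle once you stop aiming for more than you need.
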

\begin{proof}  Let $a,b, c\in A$. By Corollary \ref{tilde} we have 
 \[{\tilde e}_{\frac {p-3}2}'(c,b)\subseteq pA.\] 
 Let $g(a)$ be as in Lemma \ref{g(a)}. We can take \[c=g(a)\] to obtain:
\[{\tilde e}_{\frac {p-3}2}'(g(a),b)\subseteq pA.\]
 By Remark \ref{2} we get 
\[{\bar e}_{i}'([f(c)],[b])=[{\tilde e}_{i}'(c,b)].\]
 Therefore, for $i\geq {\frac {p-3}2}$ we have 
\[{\bar e}_{i}'([a],[b])=[{\tilde e}_{i}'(c,b)]\subseteq [pA].\]
 This concludes the proof.
  Similarly, we have ${\tilde e}_{\frac {p-3}2}'(c,ann(p^{2k}))\subseteq ann(p^{2k})$ and this implies ${\bar e}_{\frac {p-3}2}'([a],[ann(p)])=[{\tilde e}_{\frac {p-3}2}'(c, ann(p^{2k-1}))]=[0].$
\end{proof}
 Reasoning similarly as in Lemma \ref{p} we obtain the following corollary:
\begin{corollary}  Let assumptions be as in Lemma \ref{super7}. Let notation be as in  Definition \ref{bar}, 
then the definition of ${\bar e}_{0}([a], [b])$ does not depend on the particular function $\rho ^{-1}$ becuse ${\bar e}_{0}([a],[b])=[{\tilde e}_{0}(g(a),b)]$ and ${\tilde e}_{0}$ does not depend on $\rho ^{-1}$. Moreover, ${\bar e}_{0}([a],[b]+[c])={\bar e}_{0}([a],[b])+{\bar e}_{0}([a],[c])$. 
\end{corollary}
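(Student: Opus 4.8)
The plan is to mirror the argument of Lemma \ref{p}, the only new ingredients being the substitution $c=g(a)$ and the identity $[f(g(a))]=[a]$ from Lemma \ref{g(a)}. First I would recall the displayed formula following Definition \ref{ba2}, namely that ${\bar e}_0([f(c)],[b])=[{\tilde e}_0(c,b)]$ for every $c\in A$ and every $b\in pA$; this is obtained from Remark \ref{bar2} (which gives ${\bar e}_i'([f(c)],[b])=[{\tilde e}_i'(c,b)]$ for all $c\in A$) specialised to the index $\tfrac{p-1}2$, combined with Definition \ref{xx} and Definition \ref{ba2}, so that ${\bar e}_0([f(c)],[b])={\bar e}_{\frac{p-1}2}'([f(c)],[\rho^{-1}(b)])=[{\tilde e}_{\frac{p-1}2}'(c,\rho^{-1}(b))]=[{\tilde e}_0(c,b)]$. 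Since all of these earlier statements are formulated for arbitrary elements of $A$, the substitution below is legitimate.

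Next, I would take $c=g(a)$ and use $[f(g(a))]=[a]$ from Lemma \ref{g(a)} to obtain the advertised formula ${\bar e}_0([a],[b])=[{\tilde e}_0(g(a),b)]$ for $b\in pA$. Because $g$ is a fixed function (depending only on $f$) and, by Remark \ref{yyy}, ${\tilde e}_0(g(a),b)$ does not depend on the choice of the pullback function $\rho^{-1}$, the right-hand side, and hence ${\bar e}_0([a],[b])$, is independent of $\rho^{-1}$. This establishes the first assertion.

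For additivity, I would note that $pA$ is closed under addition, so for $b,c\in pA$ we have $b+c\in pA$ and $[b]+[c]=[b+c]$ in $A/ann(p^{2k})$. Applying the formula just proved gives ${\bar e}_0([a],[b]+[c])=[{\tilde e}_0(g(a),b+c)]$; then I would invoke the additivity ${\tilde e}_0(g(a),b+c)={\tilde e}_0(g(a),b)+{\tilde e}_0(g(a),c)$ recorded in Remark \ref{yyy} (which holds precisely because $b,c\in pA$ and $\rho^{-1}(b+c)-\rho^{-1}(b)-\rho^{-1}(c)\in ann(p)$), and pass to cosets to conclude ${\bar e}_0([a],[b]+[c])={\bar e}_0([a],[b])+{\bar e}_0([a],[c])$.

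I do not expect a serious obstacle; as the statement is presented, it is genuinely a corollary of Lemma \ref{p} and Remark \ref{yyy}. The only point needing a little care is checking that the earlier identities (Remark \ref{bar2}, and the consequences of Lemma \ref{super7} used in Lemma \ref{p}) were stated for all elements of $A$, so that evaluating them at $g(a)$ is permitted, and that ${\tilde e}_0$ genuinely carries no dependence on $\rho^{-1}$ — everything else is routine bookkeeping with cosets modulo $ann(p^{2k})$.
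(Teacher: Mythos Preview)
Your proposal is correct and follows essentially the same route the paper indicates: the paper's proof is merely the phrase ``Reasoning similarly as in Lemma \ref{p}'', and that is precisely what you do---substitute $c=g(a)$, use $[f(g(a))]=[a]$ from Lemma \ref{g(a)} together with the identity ${\bar e}_0([f(c)],[b])=[{\tilde e}_0(c,b)]$ recorded after Definition \ref{ba2}, and then invoke Remark \ref{yyy} for both the independence from $\rho^{-1}$ and the additivity. Your care in noting that $g$ itself is a fixed function (independent of $\rho^{-1}$, since the $e_j$ are by Remark \ref{a*a}) is a point the paper leaves implicit.
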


{\em Remark.} Let $b\in pA$, $d, d'\in A$,  and let $[d]$, $[d']$ be such that $p[d]=[b]$  then $d-\rho^{-1}(a)\in ann(p^{2k-1}$.   Let assumptions be as in Lemma \ref{super7}. Let notation be as in Definition \ref{bar}. 
 By Lemma \ref{p} we have \[{\bar e}_{0}([a], [b])={\bar e}_{\frac {p-1}2}([a], [d]).\]

  Therefore,  \[{\bar e}_{0}([a], [b])={\bar e}_{\frac {p-1}2}([a], \rho '^{-1}([b])),\]
   where $\rho '^{-1}:p[A]\rightarrow [A]$ is an arbitrary pullback function on $[A]$ (defined as in Section \ref{pullback} but for brace $[A]$ instead of $A$). 
\begin{theorem}\label{clever} 
  Let assumptions be as in Lemma \ref{p}.  Denote $[a]=[a]_{ann (p^{2k})}$. Let $f(a)$ be defined as in Definition \ref{diam}. 
 Define sets ${\bar E}_{i}^{t}([a], [b])$ for $a,b\in A$ analogously like sets $E_{i}^{t}(a,b)$ but using elements ${\bar {e}}_{i}([a], [b])$ instead of  elements $e_{i}(a,b)$ to define them.  
 Then  
\[[a*b]\in [f(a)]\odot [b]+{\bar E}_{1}^{1}([f(a)],[b])+{\bar E}_{2}^{0}([f(a)],[b])\]
and 
 \[{\bar E}_{i}^{t}([f(a)],[b])=[{\tilde E}_{i}^{t}(a,b)].\]
Moreover, this presentation depends only  on the additive group  of brace $(A, + )$ and does not depend on elements $a,b$.
\end{theorem}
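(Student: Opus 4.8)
The plan is to combine the two structural facts already in hand: Lemma~\ref{super7}, which expresses $c(a,b)\in C_i^t(a,b)$ in terms of the corresponding $\tilde c(a,b)\in\tilde C_i^t(a,b)$ modulo elements of strictly higher $(i,t)$-order, and Remark~\ref{bar2} together with Definition~\ref{ba2}, which identify $[\tilde e_i'(a,b)]$ with $\bar e_i'([f(a)],[b])$ and $[\tilde e_0(a,b)]$ with $\bar e_0([f(a)],[b])$. First I would observe that since $e_1'(a,b)=a*b$ lies in $C_1^0(a,b)$, Lemma~\ref{super7} applied with $(i,t)=(1,0)$ gives
\[
a*b\in {\tilde e}_1'(a,b)+{\tilde E}_2^0(a,b)+{\tilde E}_1^1(a,b),
\]
and the presentation used here depends only on the cardinality of $A$, not on $a,b$ or on properties of $*$. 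Passing to cosets mod $ann(p^{2k})$ and using ${\tilde e}_1'(a,b)=a\star b$ together with $[a\star b]=[f(a)]\odot [b]$ (the identity $p^{k}(a\star b)=a^{\circ p^{k}}*b$ from Remark~\ref{bar2}), this becomes
\[
[a*b]\in [f(a)]\odot[b]+[{\tilde E}_2^0(a,b)]+[{\tilde E}_1^1(a,b)].
\]

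Next I would prove the claimed equality ${\bar E}_i^t([f(a)],[b])=[{\tilde E}_i^t(a,b)]$. This is the heart of the argument and proceeds by induction on the pair $(i,t)$ under the partial order of Lemma~\ref{super7}: the generating sets $\tilde C_{i'}^{t'}(a,b)$ are built by repeatedly applying ${\tilde e}_l'(a,\cdot)$ (raising $i$ by $l$) and ${\tilde e}_0(a,\cdot)$ (raising $t$ by $1$), while the sets $\bar C_{i'}^{t'}([f(a)],[b])$ are built in exactly the same pattern from ${\bar e}_l'([f(a)],\cdot)$ and ${\bar e}_0([f(a)],\cdot)$. Since Remark~\ref{bar2} gives ${\bar e}_l'([f(a)],[x])=[{\tilde e}_l'(a,x)]$ for all $x\in A$, and the Remark following Definition~\ref{ba2} (via Lemma~\ref{p}) gives ${\bar e}_0([f(a)],[x])=[{\tilde e}_0(a,x)]$ whenever $x\in pA$ — which is guaranteed for the arguments occurring here because ${\tilde e}_{(p-3)/2}(a,A)\subseteq pA$ by Corollary~\ref{tilde}, so every ${\tilde e}_0$ in the construction is applied to an element of $pA$ — each generator of $\bar C_{i'}^{t'}([f(a)],[b])$ is the coset of the corresponding generator of $\tilde C_{i'}^{t'}(a,b)$. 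Taking additive closures and summing over $i'\geq i$ (resp.\ $i'\geq\max(i,\tfrac{p-3}2)$), $t'\geq t$ yields the desired identity of subgroups of $A/ann(p^{2k})$. One must check the edge cases $t=0$ versus $t>0$ in the index ranges of $E_i^t$ and $\bar E_i^t$, but these match by construction.

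Finally I would combine the two displays: substituting $[{\tilde E}_2^0(a,b)]={\bar E}_2^0([f(a)],[b])$ and $[{\tilde E}_1^1(a,b)]={\bar E}_1^1([f(a)],[b])$ into the coset inclusion for $[a*b]$ gives
\[
[a*b]\in [f(a)]\odot[b]+{\bar E}_1^1([f(a)],[b])+{\bar E}_2^0([f(a)],[b]),
\]
which is the assertion. The independence of the presentation from $a,b$ and its dependence only on the additive group of $A$ follow because Lemmas~\ref{prel7}, \ref{7useful}, \ref{super7} already carry this property, and because the translation between $\tilde e$ and $\bar e$ is purely formal (same construction, operation $\odot$ and $\rho^{-1}$ in place of $\star$), so no choice of $\rho^{-1}$ or use of $*$ enters. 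The main obstacle I anticipate is purely bookkeeping: making sure that throughout the inductive identification of $\bar E_i^t$ with $[\tilde E_i^t]$ every invocation of ${\bar e}_0$ really is applied to a coset of an element of $pA$ (so that Definition~\ref{ba2} and Lemma~\ref{p} apply and the choice of $\rho^{-1}$ is irrelevant), and that the order restrictions $i'\geq\tfrac{p-3}2$ attached to the $t>0$ part of the sums are respected on both sides; this is exactly where Property~$1$, Corollary~\ref{tilde}, and the constraint $p>3$ are used.
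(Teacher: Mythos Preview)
Your proposal is correct and follows essentially the same approach as the paper's own proof: apply Lemma~\ref{super7} to $a*b=e_1'(a,b)\in C_1^0(a,b)$, then use the identifications ${\bar e}_i'([f(a)],[b])=[{\tilde e}_i'(a,b)]$ and ${\bar e}_0([f(a)],[b])=[{\tilde e}_0(a,b)]$ from Remark~\ref{bar2} and Definition~\ref{ba2} to translate ${\tilde E}_i^t$ into ${\bar E}_i^t$. The paper's version is terser (it simply cites the definitions and Lemma~\ref{super7} without spelling out the inductive bookkeeping you outline), but the logical structure is identical.
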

\begin{proof} By Definitions \ref{bar} and \ref{ba2} we have \[{\bar e}_{i}([f(a)],[b])=[{\tilde e}_{i}(a,b)].\]
 By Definition \ref{ba2} \[{\bar e}_{0}([f(a)],[b])=[{\tilde e}_{0}(a,b)].\]
 It follows that  \[{\bar E}_{i}^{t}([f(a)],[b])=[{\tilde E}_{i}^{t}(a,b)].\]
 The result now follows from Lemma \ref{super7}.
\end{proof}

\begin{lemma}\label{7*}  Let assumptions be as in Lemma \ref{clever}.
Then 
\[[a]\in [f(a)]+{\bar E}_{1}^{0}([f(a)], [f(a)]).\]
  As usual, by $[a]$ we mean $[a]_{ann(p^{2k})}$. 
 \end{lemma}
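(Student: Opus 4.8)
The plan is to mimic the substitution argument of Lemma~\ref{6*}, but now performed inside the quotient brace $A/ann(p^{2k})$ using the operation $\odot$ in place of $\star$ and the elements ${\bar e}_{i}([a],[b])$ in place of ${\tilde e}_{i}(a,b)$. First I would recall from Lemma~\ref{6*} that in $A$ itself we have $a\in f(a)+{\tilde E}_{1}^{0}(a,f(a))$, with the order in which the operations $e_{j}(a,\cdot)$ and $\star$ are applied depending only on the cardinality of $A$. Passing to classes modulo $ann(p^{2k})$ and using Theorem~\ref{clever} (together with Remark~\ref{bar2}, which gives ${\bar e}_{i}'([f(a)],[b])=[{\tilde e}_{i}'(a,b)]$, and the corresponding identity ${\bar e}_{0}([f(a)],[b])=[{\tilde e}_{0}(a,b)]$), the set $[{\tilde E}_{1}^{0}(a,f(a))]$ becomes ${\bar E}_{1}^{0}([f(a)],[f(a)])$. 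Hence the identity $a\in f(a)+{\tilde E}_{1}^{0}(a,f(a))$ descends directly to $[a]\in [f(a)]+{\bar E}_{1}^{0}([f(a)],[f(a)])$, which is exactly the assertion.

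Alternatively, and perhaps more self-containedly, I would redo the substitution step by step directly at the level of $A/ann(p^{2k})$. Start from $[a]=[f(a)]+[c(a,a)]$ for some $c(a,a)\in E_{1}^{0}(a,a)$, which holds by the very definition of $f(a)$ in Definition~\ref{diam} (since $f(a)=\sum_{j}\sigma_{j}e_{j}(a)$ and $a=\sum_{j}\binom{p^{k}}{j}e_{j}'(a)=p^{k}\sum_{j}\sigma_{j}e_{j}(a)+\text{higher order}$, exactly as in Lemma~\ref{6*}). Then substitute $[a-f(a)]$ for the second argument $[a]$ inside $[c(a,a)]$, using additivity of the $e_{j}(a,\cdot)$ in the second variable (Remark~\ref{a*a}), to get $[a]\in [f(a)]+[c(a,f(a))]+E_{2}^{0}(a,a)$; iterating, $[a]\in [f(a)]+\sum [c^{(r)}(a,f(a))]+E_{r+1}^{0}(a,a)$. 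Since $C_{i}^{t}(a,a)\subseteq A^{i+t}$ and $A^{n+1}=0$, after finitely many steps the tail vanishes and we land in $[f(a)]+E_{1}^{0}(a,f(a))$; applying Theorem~\ref{clever}'s identity ${\bar E}_{i}^{t}([f(a)],[b])=[{\tilde E}_{i}^{t}(a,b)]$ together with Lemma~\ref{super7} to rewrite everything in terms of ${\bar e}$ then gives $[a]\in [f(a)]+{\bar E}_{1}^{0}([f(a)],[f(a)])$.

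The step I expect to be the main obstacle is ensuring that the passage from $A$ to $A/ann(p^{2k})$ is clean — specifically, that all the nilpotency bookkeeping ($C_{i}^{t}(a,a)\subseteq A^{i+t}$, $A^{n+1}=0$) still forces the substitution process to terminate once we are working with classes, and that the $\odot$-expressions really coincide with the images of the $\star$-expressions. This is handled by Theorem~\ref{clever} and Remark~\ref{bar2}, but one must be careful that the identity ${\bar e}_{i}'([f(a)],[b])=[{\tilde e}_{i}'(a,b)]$ and its $e_{0}$/${\bar e}_{0}$ analogue propagate through the definitions of ${\bar C}_{i}^{t}$ and ${\bar E}_{i}^{t}$ exactly as for their tilde counterparts, so that applying a $\star$-substitution downstairs is the image of applying the corresponding $\odot$-substitution upstairs. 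Since all these identities are established above and the group $A/ann(p^{2k})$ is finite, the argument closes; no genuinely new difficulty arises beyond transporting Lemma~\ref{6*} through Theorem~\ref{clever}.
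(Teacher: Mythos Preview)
Your proposal is correct and your first paragraph is essentially identical to the paper's proof: invoke Lemma~\ref{6*} to obtain $a\in f(a)+{\tilde E}_{1}^{0}(a,f(a))$, then apply Theorem~\ref{clever} with $b=f(a)$ to convert $[{\tilde E}_{1}^{0}(a,f(a))]$ into ${\bar E}_{1}^{0}([f(a)],[f(a)])$. Your alternative self-contained version simply unwinds the proof of Lemma~\ref{6*} inside the quotient, which is fine but unnecessary since Lemma~\ref{6*} is already available.
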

\begin{proof} By Lemma \ref{6*} we have 
\[a\in f(a)+{\tilde E}_{1}^{0}(a, f(a)).\]

Theorem \ref{clever}  applied for $b=f(a)$ gives the required conclusion. 
\end{proof}
\begin{theorem}\label{mily}  Let assumptions be as in Lemma \ref{clever}.
  Let $[a]=[a]_{ann (p^{2k})}$. 
 Then,  
\[[a*b]\in [a]\odot [b]+w([a], [b])\]
 where $w([a],[b])$ is an element obtained by applying
 operations $+$, $\odot $, ${\bar e}_{0}$
 to some copies of elements $[a], [b]$ and the order of applying these operation depends only on the cardinality of $A$. Recall also that ${\bar e}_{0}$ 
 does not depend of the choice of function $\rho ^{-1}$ used to construct it. Moreover ${\bar e}_{0}([x],[y])$ is applied for $x\in A$,  $y\in pA$.  
\end{theorem}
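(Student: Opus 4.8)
\textbf{Proof plan for Theorem \ref{mily}.}
The plan is to combine Lemma \ref{7*} with Theorem \ref{clever} by a substitution argument of exactly the same flavour as the one used in the proof of Lemma \ref{6*}. First I would invoke Lemma \ref{7*} to write $[a]=[f(a)]+c([a],[a])$ for some $c([a],[a])\in {\bar E}_{1}^{0}([f(a)],[f(a)])$, and similarly $[b]=[f(b)]+\ldots$; however the cleaner route is to eliminate only the first argument. Applying Theorem \ref{clever} with the second variable kept as $[b]$ gives
\[[a*b]\in [f(a)]\odot [b]+{\bar E}_{1}^{1}([f(a)],[b])+{\bar E}_{2}^{0}([f(a)],[b]),\]
so it remains to rewrite everything on the right-hand side in terms of $[a]$ and $[b]$ rather than $[f(a)]$ and $[b]$, using operations $+$, $\odot$ and ${\bar e}_{0}$ only.

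The key step is the back-substitution. By Lemma \ref{7*}, $[f(a)]$ differs from $[a]$ by an element of ${\bar E}_{1}^{0}([f(a)],[f(a)])$, and that correction term is itself built from $[f(a)]$ by the operations ${\bar e}_{i}$ (hence from $[a]\odot(\cdots)$ and ${\bar e}_0$). Substituting $[a]=[f(a)]+(\text{correction})$ into the displayed expression for $[a*b]$ and using additivity of $\odot$ in each slot (Theorem \ref{odot}, together with the additivity of ${\bar e}_i$ noted after Remark \ref{bar2}) pushes the "error" into strictly higher ${\bar E}_{i}^{t}$-filtration pieces, exactly as in the proof of Lemma \ref{6*}. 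Because ${\bar C}_{i}^{t}([f(a)],[b])\subseteq [A]^{i+t}$ and $[A]^{n+1}=0$ in the quotient pseudobrace $A/ann(p^{2k})$ (property $2$ ensures $ann(p^{2k})$ is an ideal, so $A/ann(p^{2k})$ is again a pseudobrace of bounded left-nilpotency class), the filtration terminates after finitely many substitutions, and one is left with
\[[a*b]\in [a]\odot[b]+w([a],[b]),\]
where $w([a],[b])$ is assembled from finitely many copies of $[a]$ and $[b]$ by $+$, $\odot$ and ${\bar e}_0$. The point that ${\bar e}_0([x],[y])$ is only ever applied with $y\in pA$ is automatic: in the recursive definition of the sets ${\bar E}_{i}^{t}$, the operation ${\bar e}_0$ is applied only to elements of some ${\bar C}_{i}^{t}$ with $i\geq\frac{p-3}{2}$, and by Lemma \ref{p} those already lie in $p[A]$.

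The part I expect to be the main obstacle is bookkeeping rather than a genuine mathematical difficulty: one must check that every time a substitution is performed the resulting term is again expressible purely through $+$, $\odot$, ${\bar e}_0$ applied to copies of $[a]$ and $[b]$ (and not, say, through $f$ or $\star$ directly), and that the order in which these operations are applied is dictated solely by the additive group / cardinality of $A$ and not by the particular $a,b$. This is handled by carrying along, as in Lemmas \ref{prel7}, \ref{7useful}, \ref{new7} and \ref{super7}, the assertion that "the presentation only depends on the cardinality of $A$"; since each of those lemmas already proves the corresponding statement at the level of the $E_i^t$ and ${\tilde E}_i^t$, and Theorem \ref{clever} transports it verbatim to the ${\bar E}_i^t$ via ${\bar E}_{i}^{t}([f(a)],[b])=[{\tilde E}_{i}^{t}(a,b)]$, the independence claim for $w([a],[b])$ follows without new work. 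Finally, the independence of the choice of $\rho^{-1}$ is inherited from Remark \ref{bar2} and the Corollary preceding this theorem, which already record that ${\bar e}_i'$ and ${\bar e}_0$ do not depend on $\rho^{-1}$.
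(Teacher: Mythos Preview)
Your plan has a genuine gap at the substitution step. Lemma \ref{7*} gives $[a]\in [f(a)]+{\bar E}_{1}^{0}([f(a)],[f(a)])$, i.e.\ it expresses $[a]$ via operations whose \emph{first} argument is $[f(a)]$; equivalently, the correction term is a combination of expressions $[f(a)]\odot(\cdots)$ and ${\bar e}_0([f(a)],\cdots)$, not $[a]\odot(\cdots)$ as you write. To prove the theorem you need the reverse: $[f(a)]$ written using only $[a]$ in the first slot. Your proposed back-substitution mimics Lemma \ref{6*}, but that lemma only substitutes into the \emph{second} (linear) argument of $e_j(a,\cdot)$; here the variable you must eliminate sits in the \emph{first} argument of $\odot$ and ${\bar e}_0$, and $\odot$ is not additive there (indeed $[x]\odot[y]=[\wp^{-1}((p^k x)*y)]$ is linear in $y$ but not in $x$). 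Iterating your substitution in the linear slot produces expressions in $\bar E_1^0([f(a)],[a])$, still with $[f(a)]$ in front, so the process never terminates in an expression built from $[a]$ alone.

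The paper gets around exactly this obstacle by bringing in the function $g$ from Lemma \ref{g(a)}, which satisfies $[f(g(a))]=[a]$. Applying Lemma \ref{7*} to $g(a)$ instead of $a$ yields $[g(a)]\in [a]+{\bar E}_{1}^{0}([a],[a])$, so now the first argument is already $[a]$. Iterating with $g(g(a)),\ldots,g^{(p^{n}!-1)}(a)$ and using $[g^{(p^{n}!-1)}(a)]=[f(a)]$ produces $[f(a)]=[a]+w'([a])$ with $w'$ built from $[a]$ via $+,\odot,{\bar e}_0$; only then is this plugged into Theorem \ref{clever}. The use of $g$ (equivalently, of Property~2 to invert $f$ on classes) is the missing ingredient in your argument.
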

\begin{proof} Let $g(a)$ be as in Lemma \ref{g(a)}. We know that $[f(g(a))]=[a]$.
 By Lemma \ref{7*} we applied to $g(a)$ instead of $a$ we get 
 \[[g(a)]\in [a]+{\bar E}_{1}^{0}([a], [a]).\]
Note that it does not depend of the choice of operation $\rho ^{-1}$.
 We will now repeat analogous steps. Each of these steps does not depend on the choice of operation $\rho ^{-1}$, and at each of step $\rho ^{-1}$ is well defined as it acts on element from $pA$ (as by the definition of sets ${\tilde E}_{i}^{t}(a, b)$ the operation $\rho^{-1}$ is always well defined as it always acts on elements from $pA$ during the construction of these sets).
  
 Applying the same argument to $g(a) $ instead of $a$ and simplifying using the above expression we obtain:
 \[[g(g(a))]\in [a]+w_{1}([a])\]
 where $w_{1}([a])$ is an element obtained by applying operations $+, \odot $, ${\bar e}_{0}$  to several copies of element $[a]$ several times.  
The sequence of steps taken to obtain $w_{1}([a])$ depends only  on the additive group  of brace $(A, + )$ and does not depend on elements $a,b$ and $\rho $ and  operation $\odot $. 
 Indeed, note that \[[g(g(a))]\in [g(a)]+{\bar E}_{1}^{0}([g(a)], [g(a)])\subseteq [a]+{\bar E}_{1}^{0}([a], [a])+ 
{\bar E}_{1}^{0}([a]+{\bar E}_{1}^{0}([a], [a]), [a]+{\bar E}_{1}^{0}([a], [a])).\]

Recall that $A$ has cardinality $p^{n}$. 
Continuing on in this way we obtain
\[[g^{p^{n}!-1}(a)]\in [a]+w_{p^{n!}-2}([a])\]
  where $w_{n!-2}([a])$ is an element obtained by applying operations $+, \odot , {\bar e}_{0}$ to several copies of element $[a]$.
 By Lemma \ref{g(a)}
$[g^{p^{n}!-1}(a)]=[f[a])$ therefore 
\[[f(a)]\in [a]+w_{n!-2}([a])\]
 We can now apply this expression  $[a]+w_{p^{n}!-2}([a])$ instead of $f(a)$ in Theorem \ref{clever}, and this concludes the proof.
\end{proof}

\begin{theorem}\label{nareszcie1}
 Let notation be as in Lemma \ref{clever}. Denote $[a]=[a]_{ann (p^{2k})}$. 
 Let $a,b\in A$, then $[a]*[b]$ is obtained by applying
 operations $+$, $\odot $, $\rho ^{-1}$ to elements $[a]$ and $[b]$ and the order of applying these operation depends only on the additive group of $A$. The order of applying these operations only depends on the cardinality of $A$ and the result
 does not depend of the choice of function $\rho ^{-1}$ used to construct it (and $\rho ^{-1}$ is always applied to elements from $pA$). 
\end{theorem}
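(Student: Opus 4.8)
The plan is to assemble the statement directly from the chain of results already established, reading off the formula for $[a]*[b]$ as a composite of the operations $+$, $\odot$, and $\rho^{-1}$. The starting point is Theorem \ref{mily}, which says $[a*b] \in [a]\odot[b] + w([a],[b])$, where $w([a],[b])$ is built from $[a],[b]$ by applying $+$, $\odot$, and ${\bar e}_0$, with the sequence of operations depending only on the cardinality of $A$, and with ${\bar e}_0$ independent of the choice of $\rho^{-1}$ and always applied to elements of $p[A]$. Since $[a*b] = [a]*[b]$ (the bracket map is a brace homomorphism, so it intertwines $*$), it remains only to rewrite each occurrence of ${\bar e}_0$ inside $w$ in terms of $\rho^{-1}$, $\odot$, and $+$.

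Next I would unfold the definition of ${\bar e}_0$. By Definition \ref{ba2}, for $[b]$ with $[b]\in p[A]$ we have ${\bar e}_0([a],[b]) = {\bar e}_{\frac{p-1}{2}}([a],[\rho^{-1}(b)])$, and by the Remark following the Corollary after Lemma \ref{p}, this equals ${\bar e}_{\frac{p-1}{2}}([a],\rho'^{-1}([b]))$ for any pullback function $\rho'^{-1}$ on $[A]$ — in particular the one induced by $\rho^{-1}$ — and the result does not depend on that choice. Meanwhile ${\bar e}_{\frac{p-1}{2}}([a],[c])$ is, by Definition \ref{bar}, an iterated $\odot$ expression: ${\bar e}_1'([a],[c]) = [a]\odot[c]$ and ${\bar e}_{i+1}' = [a]\odot {\bar e}_i'$. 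So every ${\bar e}_0$ appearing in $w$ is an explicit composite of $\rho^{-1}$ (applied once, to an element of $pA$, to produce the pullback) followed by $\frac{p-1}{2}$ applications of $\odot$. Substituting these expansions into $w$ gives $[a]*[b]$ as a formula in $[a],[b]$ using only $+$, $\odot$, and $\rho^{-1}$, with $\rho^{-1}$ always applied to elements of $pA$ as required — this last point needs the observation, already noted in the proof of Theorem \ref{mily} via the structure of the sets ${\tilde E}_i^t$, that at each stage the argument of $\rho^{-1}$ lies in $pA$.

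Finally I would verify the two independence claims. That the \emph{order} of operations depends only on the cardinality of $A$ follows because every ingredient — Theorem \ref{mily}, the expansion of ${\bar e}_0$, and Lemmas \ref{prel7}, \ref{7useful}, \ref{new7}, \ref{super7} underlying them — has a construction sequence depending only on $|A| = p^n$ (with the resulting \emph{value} depending on the additive group, as emphasized in the Remark before Lemma \ref{new7}). That the result does not depend on the choice of $\rho^{-1}$ follows because ${\bar e}_0$ is $\rho^{-1}$-independent and $\odot$, although defined via $\wp^{-1} = (\rho^{-1})^k$, yields a well-defined operation on $A/ann(p^{2k})$ by Theorem \ref{odot}. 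I expect the only genuinely delicate point to be bookkeeping: checking that the $\rho^{-1}$'s introduced through $\odot$ and through ${\bar e}_0$ are all legitimately applied to $pA$ and that collapsing the nested ${\bar E}$-expressions terminates — but this is exactly the termination argument already carried out in Lemma \ref{super7} (using $A^{n+1}=0$) and in the proof of Theorem \ref{mily}, so no new work is needed beyond invoking it.
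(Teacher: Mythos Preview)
Your proposal is correct and follows essentially the same approach as the paper: invoke Theorem \ref{mily} to get $[a]*[b]$ in terms of $+$, $\odot$, and ${\bar e}_0$, then unfold ${\bar e}_0([a],[b]) = {\bar e}_{\frac{p-1}{2}}([a],[\rho^{-1}(b)])$ as an iterated $\odot$ applied to a $\rho^{-1}$-pullback, and verify the independence claims via Theorem \ref{odot} and the $\rho^{-1}$-independence of ${\bar e}_0$. The paper's own proof is extremely terse---it records the formula $[a]\odot[b]=[(\rho^{-1})^{2k}((p^k a)*(p^k b))]$ to confirm $\odot$ is independent of the choice of $\rho^{-1}$ and then simply says ``the result now follows from Theorem \ref{mily}''---so your write-up is effectively the unpacking the paper leaves implicit.
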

\begin{proof} Let $a,b\in A$.
 Observe that $[a]\odot  [b]=
[\wp^{-1}(\wp^{-1}((p^{k}a)*(p^{k}b)))].$ 
 It can be verified by multiplying both sides by $p^{2k}$ (since if $u,v\in A$ then  $[u]=[v]$ if and only if $u-v\in ann(p^{2k})$ which is equivalent to $p^{2k}u=p^{2k}v$). 
Notice that $[\wp^{-1}(\wp^{-1}((p^{k}a)*(p^{k}b)))]$  does not depend on the choice of the function $\wp ^{-1}$ since $[ann(p^{2k})]=[0]$. 
 Notice that $[a]\odot  [b]=
[(\rho^{-1})^{2k}((p^{k}a)*(p^{k}b)))].$ 
 The result now follows from Theorem \ref{mily}.
\end{proof}

{\em Remark.} For $\rho^{-1}[x]$ we  can take $[\rho ^{-1}(x)$] since the result does not depend on the choice of $\rho ^{-1}$ (as we can define another $\rho $ function by  $\rho _{2}([x])=[\rho ^{-1}(x)$]). Moreover,  at each step we can use a different function $\rho ^{-1}$ and the result would be the same.
  
$ $

{\bf Proof of Theorem \ref{71}}. It follows from Theorem \ref{nareszcie1}. 

$ $

 The following result was proved in \cite{paper1}. Notice that Property $1$ was called Properties $1'$ and $1''$.
$ $

\begin{theorem}\label{f(a)}(Theorem $8$, \cite{paper1}). Let $A$ be a brace. Assume that $ann(p^{i})$, $p^{i}A$ are ideals in $A$ for every $i$ and that $p^{i}A*ann(p^{j})\subseteq ann(p^{j-i})$ for all $j\geq i$.
 Suppose that $A$ satisfies property $1$. Let $k$ be a natural number.  Then the map $[a]_{ann (p^{2k})}\rightarrow [f(a)]_{ann (p^{2k})}$ is well defined and injective, where $f(a)$ is defined as in Lemma \ref{j}.  
\end{theorem}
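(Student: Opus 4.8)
The plan is to deduce this result directly from the machinery already developed, specifically from Lemma \ref{j} together with the identity $p^{k}f(a)=a^{\circ p^{k}}$ established in Lemma \ref{wazny}(2). First I would recall that by Definition \ref{diam} the function $f$ is given by $f(a)=\sum_{j=1}^{p^{k}}\sigma_{j}e_{j}(a)$, and that Lemma \ref{wazny}(2) yields $p^{k}f(a)=p^{k}\sum_{j=1}^{p^{k}}\sigma_{j}e_{j}(a)=a^{\circ p^{k}}$. This is precisely the hypothesis on $f$ required in Lemma \ref{j}. So the content to be checked is that the map $[a]_{ann(p^{2k})}\mapsto[f(a)]_{ann(p^{2k})}$ is well defined (independent of the representative) and injective.

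For well-definedness I would argue as follows: if $[a]_{ann(p^{2k})}=[b]_{ann(p^{2k})}$, i.e. $a-b\in ann(p^{2k})$, then $p^{2k}(a-b)=0$, so $p^{k}a$ and $p^{k}b$ differ by an element of $ann(p^{k})$. Using that $p^{k}A$, $ann(p^{k})$ are ideals with $(p^{k}A)*ann(p^{k})\subseteq ann(p^{k})$ (which follows from the standing assumption $p^{i}A*ann(p^{j})\subseteq ann(p^{j-i})$ applied with suitable indices), together with the expansion of $a^{\circ p^{k}}$ and $b^{\circ p^{k}}$ into the $e_{j}$-terms, one checks that $a^{\circ p^{k}}-b^{\circ p^{k}}\in ann(p^{k})$; equivalently $p^{k}(f(a)-f(b))=a^{\circ p^{k}}-b^{\circ p^{k}}\in ann(p^{k})$, hence $p^{2k}(f(a)-f(b))=0$, i.e. $f(a)-f(b)\in ann(p^{2k})$, so $[f(a)]_{ann(p^{2k})}=[f(b)]_{ann(p^{2k})}$.

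For injectivity I would proceed contrapositively, invoking Property $1$ in the essential way. Suppose $[f(a)]_{ann(p^{2k})}=[f(b)]_{ann(p^{2k})}$, so $p^{2k}(f(a)-f(b))=0$, which gives $p^{k}(a^{\circ p^{k}}-b^{\circ p^{k}})=0$. I then want to conclude $p^{2k}(a-b)=0$. The idea is to pass through the operation $\star$ and the elements $\tilde e_{j}$: the additivity $a\star(x+y)=a\star x+a\star y$ from Remark \ref{a*a}, combined with the fact (Corollary \ref{tilde}, Remark \ref{3}) that $\tilde e_{\frac{p-3}{2}}(a,A)\subseteq pA$, shows that repeated application of $\star$ (equivalently, of the $p^{k}$-th power map composed with a pullback) does not lose $p$-divisibility information beyond a controlled amount. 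Quantitatively, Property $1$ forces the "nilpotent tail" of $a^{\circ p^{k}}$ relative to $p^{k}a$ to land in high powers of $p$, so that $a^{\circ p^{k}}\equiv p^{k}\cdot(\text{something congruent to }a)$ modulo the relevant annihilators, and hence $p^{k}(a^{\circ p^{k}}-b^{\circ p^{k}})=0$ propagates back to $p^{2k}(a-b)=0$.

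The main obstacle, and the place where the argument is genuinely delicate rather than formal, is the injectivity step: controlling how the higher-order terms $e_{j}(a)$ for $j$ divisible by $p$ interact with the annihilator filtration. One must use Property $1$ — specifically both clauses, $a*(\cdots*a*b)\in pA$ and $a*(\cdots*a*ann(p^{i}))\subseteq ann(p^{i-1})$ with $a$ appearing $\lfloor\frac{p-1}{4}\rfloor$ times — to guarantee that the corrections $\sigma_{j}e_{j}(a)$ for $j\geq p$, when multiplied by $p^{k}$, contribute only to terms already controlled, so that no cancellation can make $f(a)-f(b)$ land in $ann(p^{2k})$ without $a-b$ doing so. Since this is Theorem $8$ of \cite{paper1}, I would ultimately cite that proof; the sketch above indicates why Property $1$ and the ideal hypotheses on $p^{i}A$ and $ann(p^{j})$ are exactly what make the filtration arguments close up.
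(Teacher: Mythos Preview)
The paper does not prove Theorem \ref{f(a)} at all: it is stated as a quotation of Theorem~8 from \cite{paper1}, with only the remark that the proof there works for arbitrary $k$. So there is no in-paper argument to compare your proposal against; the paper's ``proof'' is the citation, and in the end your proposal defers to the same citation for the hard direction.

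That said, your sketch has a genuine gap in the well-definedness half, not just the injectivity half. You write that from $a-b\in ann(p^{2k})$ one ``checks'' via the $e_j$-expansion that $a^{\circ p^{k}}-b^{\circ p^{k}}\in ann(p^{k})$. But the operation $*$ is only right-additive, not left-additive: in a brace $(a+c)*x$ is \emph{not} $a*x+c*x$ in general, so $e_j'(a+c)$ does not expand nicely in terms of $e_j'(a)$ and terms involving $c$. The ideal hypotheses $p^{i}A*ann(p^{j})\subseteq ann(p^{j-i})$ by themselves do not give you control over how perturbing the \emph{left} argument by an element of $ann(p^{2k})$ affects iterated left multiplications. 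This forward implication already requires Property~1 in an essential way (to push the error terms into successively higher $p$-powers and smaller annihilators), exactly as in the injectivity direction you flagged as delicate. So both directions are real work done in \cite{paper1}; your proposal treats one as routine when it is not.
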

 Notice that the proof of Theorem \ref{f(a)} in \cite{paper1} works for arbitrary $k$ (it is not necessary to assume that $p^{k(p-1)}A=0$).

\begin{proposition}\label{5}(Lemma $10$,  \cite{paper1}).  Let $A$ be a brace of cardinality $p^{n}$ for some prime number $p$ and some natural number $n$.
 Suppose that $A$ satisfies property $1$. Then  $ann(p^{i})$, $p^{i}A$ are ideals in $A$ for every $i$ and that $(p^{i}A)*ann(p^{j})\subseteq ann(p^{j-i})$ for all $j\geq i$.
\end{proposition}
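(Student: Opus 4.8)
\textbf{Plan for the proof of Proposition \ref{5}.}
The statement asserts that property $1$ for a brace $A$ of cardinality $p^n$ forces $ann(p^i)$ and $p^i A$ to be ideals, with the ``mixed'' containment $(p^i A)*ann(p^j)\subseteq ann(p^{j-i})$ for $j\geq i$. Since this is quoted as Lemma $10$ of \cite{paper1}, the natural route is to import that proof, but here is how I would reconstruct it. The additive statements are automatic: $ann(p^i)$ and $p^i A$ are subgroups of the abelian group $(A,+)$, so I only need to check closure under the operations $x*(-)$ and $(-)*x$ for arbitrary $x\in A$, using that in a brace $a*b=a\circ b-a-b$ and the left-brace identity $(a+b+a*b)*c=a*c+b*c+a*(b*c)$, together with left-distributivity $a*(b+c)=a*b+a*c$.

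First I would handle $p^i A$. Left-distributivity gives $a*(p^i b)=p^i(a*b)\in p^i A$, so $p^i A$ is closed under left multiplication by any $a$. For right multiplication, i.e.\ showing $(p^i a)*b\in p^i A$, I would use the brace relation to expand $(p^i a)*b$ as a $\mathbb Z$-combination of iterated left-multiplications $e_j'(a,b)=a*(a*(\cdots *a*b))$: writing $(p^i a)$ via $a^{\circ p^i}$ up to lower-order corrections, the key identity is the one already exploited in Lemma \ref{wazny}, namely $a^{\circ m}*b=\sum_{j\geq 1}\binom{m}{j}e_j'(a,b)$. The point of property $1$ is precisely that for $m=p^i$ the non-$p$-divisible binomial coefficients are balanced against the fact that $e_j'(a,b)\in pA$ once $j\geq \lfloor(p-1)/4\rfloor$, and for small $j$ the binomial coefficient $\binom{p^i}{j}$ is itself divisible by a large power of $p$; combining the two shows $(p^i a)*b\in p^i A$. (This is the same bookkeeping that underlies the construction of $f$ and $\star$ earlier in the paper, so I can lean on Lemma \ref{wazny}.)

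Next, $ann(p^i)$. Left multiplication: if $p^i b=0$ then $p^i(a*b)=a*(p^i b)=a*0=0$, so $a*b\in ann(p^i)$. Right multiplication is the delicate direction: given $r\in ann(p^i)$, I must show $p^{j-i}(r*b)=0$ is replaced by the cleaner claim $r*b\in ann(p^i)$, and more generally that $(p^i a)*r\in ann(p^{j-i})$ when $r\in ann(p^j)$. Here I would again expand $(p^i a)*r$ using $a^{\circ p^i}*r=\sum_j\binom{p^i}{j}e_j'(a,r)$ and invoke the second clause of property $1$: $a*(a*(\cdots*a*ann(p^j)))\subseteq ann(p^{j-1})$ when $a$ appears $\lfloor(p-1)/4\rfloor$ times, so iterating, $e_j'(a,r)\in ann(p^{j-\lceil j/\lfloor(p-1)/4\rfloor\rceil})$ roughly speaking; pairing this decay of the annihilator order against the $p$-divisibility of $\binom{p^i}{j}$ yields $(p^i a)*r\in ann(p^{j-i})$. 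In particular taking $j=i$ this shows $ann(p^i)*A$-type terms land back in $ann(p^i)$ after hitting with $p^i$, but the precise ideal statement for $ann(p^i)$ itself (the case $i=j$, no prefactor $p^i$) follows directly from the left-distributive computation above plus the brace identity rewriting $b*r$ in terms of $r*(-)$ and products; concretely one uses $(r+b+r*b)*c = r*c+b*c+r*(b*c)$ with a suitable choice to isolate $b*r$.

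\textbf{Main obstacle.} The genuinely technical point is the right-multiplication / mixed-containment estimate: one must track, simultaneously, how fast $e_j'(a,-)$ raises $p$-divisibility (the first clause of property $1$) or lowers annihilator order (the second clause), and how fast $p$ divides $\binom{p^i}{j}$, and check that the former always dominates so that every term in $\sum_j\binom{p^i}{j}e_j'(a,r)$ lies in $ann(p^{j-i})$ (resp. $p^iA$). The hypothesis $p>3$ and the constant $\lfloor(p-1)/4\rfloor$ are exactly calibrated to make this inequality hold with room to spare, so the argument is a careful but elementary induction on $j$ (or on the number of factors), essentially the same counting already carried out in the proof of Lemma \ref{wazny}; I would present it by reducing to that lemma rather than redoing the combinatorics from scratch.
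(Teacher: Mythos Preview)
The paper does not prove this proposition; it is quoted from \cite{paper1} without argument, so the relevant question is whether your reconstruction stands on its own. Your general strategy---pair the expansion $c^{\circ m}*b=\sum_{j}\binom{m}{j}e_j'(c,b)$ with the two clauses of Property~1---is the right engine, but there is a real gap in how you deploy it. That identity computes $(c^{\circ m})*b$, not $(mc)*b$, and in a left brace there is no formula for $(x+y)*b$ in terms of $x*b$ and $y*b$. Your phrase ``writing $(p^{i}a)$ via $a^{\circ p^{i}}$ up to lower-order corrections'' hides exactly the missing step: one must also argue that every element of $p^{i}A$ is of the form $c^{\circ p^{i}}$ for some $c\in A$ (the binomial counting plus Property~1 gives only the inclusion $\{c^{\circ p^{i}}:c\in A\}\subseteq p^{i}A$, not the reverse), or else produce a different bridge between additive multiples and $\circ$-powers before Lemma~\ref{wazny} can be invoked.

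The $ann(p^{i})$ paragraph has the same problem in sharper form. The left-brace identity $(x\circ y)*z=x*z+y*z+x*(y*z)$ decomposes the \emph{left} factor only; there is no way to ``isolate $b*r$'' from it by a clever substitution as you suggest, so the sentence claiming the ideal property ``follows directly'' from that identity does not go through. What actually works is again the $\circ$-power route: for $r\in ann(p^{i})$ one first shows $r^{\circ p^{i}}=0$ (small-$j$ terms in $\sum_{j}\binom{p^{i}}{j}e_{j}'(r)$ vanish because $\binom{p^{i}}{j}$ carries enough powers of $p$ to kill an element of $ann(p^{i})$, large-$j$ terms vanish by the second clause of Property~1), and then rearranges $0=r^{\circ p^{i}}*b=\sum_{j}\binom{p^{i}}{j}e_{j}'(r,b)$ to force $p^{i}(r*b)=0$. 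Your sketch gestures at this mechanism for the mixed containment but never carries it out for the basic ideal statement; that is where the work lies.
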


$ $

{\bf Proof of Theorem \ref{main}.}  By assumptions $A$ satisfies property $1$.  By Theorem \ref{f(a)} and Lemma \ref{5}, $A$ satisfies property $2$ (since $A$ satisfies property $1$). Therefore, $A$ satisfies assumptions of Theorem \ref{nareszcie1}. 
 We obtain that for $a,b\in A$ we have   $[a*b]=q([a],[b])$ where $q([a],[b])$ is the formula from Theorem \ref{nareszcie1} which is obtained by applying operations $\odot $, $+$ $\rho ^{-1}$ on brace $A'$. 
  Recall that 
 $[a]\odot [b]=[\wp^{-1}((p^{k}a)*b)]$.
 Observe that $[a]\odot [b]=[\wp^{-1}(wp^{-1}((p^{k}a)*(p^{k}b))]$. So $[a]\odot [b]$ depends only on the operation $*$  in $p^{k}A$, and operation $+$ in $A$ and it is obtained by applying  a pullback function $\wp ^{-1}$ and 
 the result does not depend on the function $\wp^{-1}$. 
 The result now  follows from Theorem \ref{nareszcie1}.

$ $

{\bf Acknowledgments.} The author acknowledges support from the
EPSRC programme grant EP/R034826/1 and from the EPSRC research grant EP/V008129/1. 
 The author is very grateful to Bettina Eick and Efim Zelmanov for answering her questions about what is known about extensions of Lazard's correspondence in group theory.

\pagebreak

$ $

$ $

\begin{center}
\title{\huge Braces whose additive groups have small rank}
\end{center}

$ $

\begin{center}
\author{ Agata Smoktunowicz}
\end{center}

$ $

\date{ }
\maketitle
\begin{abstract}  
Let $A$ be a brace of cardinality $p^{n}$ for some prime number $p>3$, and let 
$k$ be such that $p^{k(p-1)}A=0$.  Denote $ann(p^{4})=\{a\in A:p^{4}a=0\}$.
 Suppose that the additive group of $A$ has rank at most ${\frac {p-1}4}$. 
 It is shown that brace $A/ann(p^{4k})$ is obtained by the Agrachev-Gamkerlidze  construction of the group of flows from a pre-Lie ring, where instead of division by $p$ a function $\rho ^{-1}:pA\rightarrow A$  is used.
 This formula only depends on the cardinality of $A$ and it does not depend on the choice of $\rho ^{-1}$.  
Similar results for some other types of braces are also presented.
\end{abstract}

\section{Introduction}
 In 2007, Wolfgang Rump introduced the notion of a brace in order to describe all non-degenerate involutive set-theoretic solutions of the Yang-Baxter equation. Braces are also related to many topics in algebra and mathematical physics;  
 for example, braces are exactly bijective 1-cocycles on abelian groups \cite{Rio}, so they have connections to homological algebra and group theory. 
They also have strong connections to Lie algebras, cohomology of groups and  cohomology of algebras. 
Quantum integrable systems related to braces were
introduced several years ago, and braces and skew-braces are also connected to reflection and Pentagon equations \cite{pent, doikou}.

 Braces whose additive groups have a small rank were investigated by David Bachiller in \cite{BA}, and he proved several interesting results about such braces, for example that additive and multiplicative orders of each element in such a brace agree. Recall also that an abelian group of cardinality $p^{n}$ for a prime number $p$ and a natural number $n$ has {\em rank} at most $c$  if it is a direct sum of at most  $c$ cyclic groups.

Because braces are a generalisation of Jacobson radical rings, they can be investigated by applying methods from noncommutative ring theory, and we will use these methods in this paper. 
  Methods from noncommutative ring theory were  used in several other papers on related  topics \cite{an, al, TB, cjo, Rio, CO, KS}. 
In particular, in \cite{Facchini}   Cerqua and  Facchini  applied noncommutative ring theory methods  to describe connections between braces and pre-Lie algebras.

 The main results of this paper are as follows.

\begin{theorem}\label{rank}
 Let $(A, \circ , +)$ be a brace of cardinality $p^{n}$ for a prime number $p$ and for a natural number $n$. Suppose that the additive group $(A, +)$ of brace $A$ has rank not exceeding ${\frac {p-1}4}$.
 Let $k$ be such that $p^{k(p-1)}A=0$.
 Then the factor brace  $A/ann(p^{4k})$ is obtained by the construction of modified group of flows 
 applied to a pre-Lie ring with the same additive group.
 \end{theorem}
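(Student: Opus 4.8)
The plan is to derive Theorem~\ref{rank} from Theorem~\ref{main} (equivalently, from Theorems~\ref{71} and~\ref{nareszcie1}) by first showing that any brace whose additive group has rank at most $(p-1)/4$ automatically satisfies Property~$1$, and then running the machinery of the preceding sections with the parameter $2k$ in place of $k$. If $(A,+)$ is trivial the assertion is immediate, so I would assume the rank $r$ of $(A,+)$ satisfies $1\le r\le (p-1)/4$; in particular $p\ge 5$ and $r\le\lfloor (p-1)/4\rfloor$.

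\emph{Step 1 (the new ingredient): small additive rank forces Property~$1$.} I would use the homomorphism $\lambda\colon (A,\circ)\to\mathrm{Aut}(A,+)$, $\lambda_{a}(b)=a*b+b$, so that $a*b=(\lambda_{a}-\mathrm{id})(b)$. Since $(A,\circ)$ is a $p$-group, so is $\lambda(A)$. Each $\lambda_{a}$ is an additive automorphism, hence preserves $pA$ and $ann(p)$ and induces automorphisms $\bar\lambda_{a}$ of the $\mathbb{F}_{p}$-vector spaces $A/pA$ and $ann(p)$, both of dimension $r$. The maps $a\mapsto\bar\lambda_{a}$ are group homomorphisms into $\mathrm{GL}_{r}(\mathbb{F}_{p})$ with $p$-group image, so each $\bar\lambda_{a}$ is unipotent and $(\bar\lambda_{a}-\mathrm{id})^{r}=0$ on both spaces. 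Reading this back: with $a$ appearing $r$ times, $a*(a*(\cdots *a*b))\in pA$ for every $b\in A$ and $a*(a*(\cdots *a*x))=0$ for every $x\in ann(p)$. For $x\in ann(p^{i})$ one has $p^{i-1}x\in ann(p)$, and as left multiplication by $a$ commutes with multiplication by integers, $p^{i-1}\bigl(a*(a*(\cdots *a*x))\bigr)=a*(a*(\cdots *a*(p^{i-1}x)))=0$, i.e.\ $a*(a*(\cdots *a*x))\in ann(p^{i-1})$. Since $r\le\lfloor (p-1)/4\rfloor$ and further left multiplications by $a$ preserve $pA$ and fix $0$, Property~$1$ holds.

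\emph{Step 2: invoking the earlier results with parameter $2k$.} Property~$1$ and Proposition~\ref{5} give that all $p^{i}A$ and $ann(p^{i})$ are ideals of $A$ with $(p^{i}A)*ann(p^{j})\subseteq ann(p^{j-i})$ for $j\ge i$. Theorem~\ref{f(a)}, applied with $2k$ in place of $k$, says that $[a]_{ann(p^{4k})}\mapsto[f(a)]_{ann(p^{4k})}$ is well defined and injective, which is precisely the equivalence $p^{4k}a=p^{4k}b\iff p^{2k}a^{\circ p^{2k}}=p^{2k}b^{\circ p^{2k}}$; together with the ideal statements this is Property~$2$ for the parameter $2k$. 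Since $p^{k(p-1)}A=0$ forces $p^{(2k)(p-1)}A=0$, Theorem~\ref{odot} provides the operation $\odot$ for the parameter $2k$ — call it $\odot'$, with $[x]\odot'[y]=[(\rho^{-1})^{4k}((p^{2k}x)*(p^{2k}y))]$ and $[\,\cdot\,]=[\,\cdot\,]_{ann(p^{4k})}$ — on $A/ann(p^{4k})$. Theorem~\ref{nareszcie1} with $2k$ in place of $k$ then yields: $[a]*[b]$ in $A/ann(p^{4k})$ is obtained from $[a],[b]$ by a fixed sequence of applications of $+$, $\odot'$ and $\rho^{-1}$ (the latter applied only to elements of $p[A]$), the sequence depending only on the additive group of $A$ and the outcome being independent of the choice of $\rho^{-1}$.

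\emph{Step 3: pre-Lie ring, the modified group of flows, and the main obstacle.} Since $p^{2k}A$ is strongly nilpotent \cite{paper1}, the pair $(+,\odot')$ makes $A/ann(p^{4k})$ a right nilpotent pre-Lie ring with the same additive group; this is the passage from strongly nilpotent braces to pre-Lie rings of \cite{asas, passage, paper1}. What then remains — and this is the hard part — is to recognise the recipe of Step~2 as the Agrachev--Gamkrelidze construction of the group of flows of this pre-Lie ring, but with every division by an integer replaced by an application of $\rho^{-1}$: one must unwind the definitions of the elements $e_{i}$, $\tilde e_{i}$, $\bar e_{i}$ underlying Theorem~\ref{71} and verify term by term that the resulting formula for $*$ is exactly the exponential-type series of the (modified) group of flows, with $\odot'$ playing the role of the pre-Lie product and the occurrences of $\rho^{-1}$ sitting precisely where the reciprocals of factorials would. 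Granting this identification, $A/ann(p^{4k})$ is the modified group of flows of $(A/ann(p^{4k}),+,\odot')$, the recipe depends only on the cardinality of $A$ and not on $\rho^{-1}$, and Theorem~\ref{rank} follows. Everything else is a direct application of Theorems~\ref{main}, \ref{f(a)}, \ref{odot}, \ref{nareszcie1} and Proposition~\ref{5}.
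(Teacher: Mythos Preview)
Your Step~1 is essentially correct and close to the paper's own argument, though the paper works in $L(b)/pL(b)$ for the left ideal generated by $b$ and uses that the image of $\lambda$ is a $p$-subgroup of $\mathrm{GL}_r(\mathbb{F}_p)$, hence simultaneously strictly upper-triangularisable; this yields $a_1*(a_2*(\cdots*a_c*b))\in pL(b)$ for \emph{distinct} $a_i$, i.e.\ Properties~$1'$ and~$1''$, not only Property~$1$. Your unipotency argument for a single $a$ gives only Property~$1$; the stronger form is what the rest of the paper's proof actually uses. Step~2 is fine as an invocation of the first-part machinery.

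The real gap is Step~3, and you have correctly located it but not closed it. Two points. First, $\odot'$ is not the pre-Lie multiplication the paper uses: the pre-Lie product is the weighted sum
\[
[a]\bullet[b]=-(1+p+\cdots+p^{n})\sum_{i=0}^{p-2}\xi^{\,p-1-i}\bigl[(\xi^{i}p^{k}a)*b\bigr],
\]
and it is $\bullet$, not $\odot'$, that is shown to satisfy the pre-Lie identity. Second, the paper never attempts the term-by-term identification of the recipe from Theorem~\ref{nareszcie1} with the exponential-type series of the group of flows that you describe as ``the hard part''. Instead it argues by comparison: it builds the pre-Lie ring $(A/ann(p^{2k}),+,\bullet)$, applies the modified group-of-flows construction to obtain a \emph{pseudobrace} $C_2$, verifies that $C_2$ again satisfies Properties~$1'$,~$1''$ and~$2$, and then shows that the operation $\odot$ computed in $C_2$ coincides with the $\odot$ computed in the original brace $C_1$ (because on $p^{k}A$ the modified and ordinary groups of flows agree, and there $\bullet$ matches the pre-Lie product recovered from the brace). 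Theorem~\ref{71} is then applied \emph{twice}, once to $C_1$ and once to $C_2$: since both satisfy its hypotheses, share the additive group, and have the same $\odot$, the fixed recipe in $+,\odot,\rho^{-1}$ must produce the same $*$ in both, whence $C_1/Ann(p^{2k})=C_2/Ann(p^{2k})$. In other words, Theorem~\ref{71} functions as a uniqueness statement rather than as an explicit formula to be matched against the flows expansion; your proposal lacks this comparison step, and without it the identification you postpone cannot be completed.
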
 

 The formula for a modified group of flows used  in Theorems \ref{rank},  \ref{uniform} and \ref{main1}  is introduced in Section \ref{groupflows} and  is the same as the formula for the group of flows in \cite{AG},  but instead of a division by $p$ a function $\rho ^{-1}$ is used resembling division by $p$. Moreover the result does not depend on the choice of function $\rho ^{-1}$.
 
 Let $(A,\circ , +)$  be a brace, then $a*b=a\circ b-a-b$.  Recall that $A^{1}=A$ and inductively $A^{i+1}=A*A^{i}$ is the left nilpotent series of $A$. 
 The next result of this paper is:
\begin{theorem}\label{uniform}
Let $(A, +, \circ )$ be a brace of cardinality $p^{n}$ for a prime number $p$ and for a natural number $n$, whose additive group $(A, +)$ is a direct sum of some number of the cyclic group of cardinality $p^{\alpha }$ for some $\alpha $. 
  Suppose that $A^{j}\in pA$ for some $j\leq  {\frac {p-1}4}$. Let $k$ be such that $p^{k(p-1)}A=0$.

 Then the factor  brace  $A/ann(p^{4k})$ is obtained by the construction of modified group of flows applied to a pre-Lie ring with the same additive group.
 \end{theorem}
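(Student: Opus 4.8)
The plan is to reduce Theorem \ref{uniform} to the main result of the first part of the paper (Theorem \ref{main}, in the refined form of Theorem \ref{nareszcie1}), applied with the parameter $2k$, and then to feed in the passage from strongly nilpotent braces to pre-Lie rings. Write $p^{\alpha}$ for the exponent of $(A,+)$; we may assume $4k\le\alpha$, since otherwise $ann(p^{4k})=A$ and the statement is vacuous. First I would check that $A$ satisfies Property $1$. The left nilpotent series is decreasing, $A^{i+1}=A*A^{i}\subseteq A*A^{i-1}=A^{i}$, so $A^{j}\subseteq pA$ with $j\le{\frac{p-1}4}$ gives $A^{m}\subseteq pA$ for all $m\ge j$, and in particular the element $a*(a*(\cdots *a*b))$ with $\lfloor{\frac{p-1}4}\rfloor$ copies of $a$ lies in $A^{\lfloor(p-1)/4\rfloor+1}\subseteq pA$; that is part $(1)$. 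For part $(2)$, the homocyclic hypothesis gives $ann(p^{i})=p^{\alpha-i}A$ for $0\le i\le\alpha$, so for $1\le i\le\alpha-1$, writing $x\in ann(p^{i})$ as $x=p^{\alpha-i}c$ and using the iterated additivity of $*$ in its second variable together with part $(1)$,
\[a*(a*(\cdots *a*x))=p^{\alpha-i}\bigl(a*(a*(\cdots *a*c))\bigr)\in p^{\alpha-i}\cdot pA=p^{\alpha-i+1}A=ann(p^{i-1}),\]
while the cases $i=\alpha$ and $i\ge\alpha+1$ reduce to part $(1)$ and to a triviality. This is the only point where the present hypotheses are used instead of the rank hypothesis of Theorem \ref{rank}.

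Next I would assemble the hypotheses of the first-part machinery for the parameter $2k$. By Proposition \ref{5} the $p^{i}A$ and $ann(p^{i})$ are ideals of $A$ and $(p^{i}A)*ann(p^{j})\subseteq ann(p^{j-i})$ for $j\ge i$; and since $a^{\circ p^{2k}}=p^{2k}f(a)$ for the map $f$ attached to $2k$, Theorem \ref{f(a)} (valid for every parameter) says exactly that $p^{4k}a=p^{4k}b$ iff $p^{2k}a^{\circ p^{2k}}=p^{2k}b^{\circ p^{2k}}$, i.e.\ Property $2$ holds for $2k$. Since $p^{2k(p-1)}A=0$ follows from $p^{k(p-1)}A=0$, Theorem \ref{nareszcie1} applies with $2k$ in place of $k$: on $A/ann(p^{4k})$ the operation $*$ is given by a formula built from $+$, $\rho^{-1}$ and the operation $\odot$, where $[x]\odot[y]=[(\rho^{-1})^{2k}((p^{2k}x)*(p^{2k}y))]_{ann(p^{4k})}$, the formula depending only on the additive group of $A$, being independent of the choice of $\rho^{-1}$, and with $\rho^{-1}$ only ever applied to elements of $pA$. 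In particular the brace $A/ann(p^{4k})$ is reconstructed from the brace $p^{2k}A$ and the pullback $(\rho^{-1})^{2k}$.

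Now I would bring in the pre-Lie ring. Because $p^{2k}A$ is an ideal of $A$ and $*$ is additive in its right variable, a product of $m$ elements of $p^{2k}A$ under $*$, with any arrangement of brackets, lies in $p^{2km}A$; as $p^{2km}A=0$ once $2km\ge k(p-1)$, the brace $p^{2k}A$ (strongly nilpotent by \cite{paper1}) has strong nilpotency index at most $\lceil(p-1)/2\rceil<p$. Because this index lies below $p$, the factorial denominators entering the Agrachev--Gamkrelidze construction \cite{AG} are coprime to $p$, so the passage between strongly nilpotent braces and pre-Lie rings of \cite{passage, asas} is available for $p^{2k}A$. Transporting this pre-Lie structure along the pullbacks that occur in the formula of the previous step would produce a pre-Lie ring $N$ carried by the additive group $(A/ann(p^{4k}),+)$ itself, and substituting into that formula would exhibit the brace $A/ann(p^{4k})$ as the Agrachev--Gamkrelidze group-of-flows formula for $N$ with each division by $p$ replaced by an application of $\rho^{-1}$ — that is, as the modified group of flows of $N$.

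The hard part will be this last transport together with its verification. One has to check that the transported product genuinely satisfies the (left) pre-Lie identity on $(A/ann(p^{4k}),+)$, and that the modified group of flows of $N$ returns precisely the brace $A/ann(p^{4k})$, independently of the choice of $\rho^{-1}$. The obstruction is that $\rho^{-1}$ is additive and multiplicative only modulo $ann(p)$, so passing the pre-Lie identity and the flows formula through the pullbacks introduces error terms; the key is that every such error term lies in the ideal annihilated in the quotient $A/ann(p^{4k})$ — exactly what Property $1$ and the filtration estimates of Lemmas \ref{prel7}, \ref{7useful} and \ref{super7} were designed to guarantee, and this same control (compare Theorem \ref{mily} and Theorem \ref{nareszcie1}) also yields the independence of the choice of $\rho^{-1}$.
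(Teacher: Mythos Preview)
Your first paragraph, verifying Property~$1$ from the uniform hypothesis via $ann(p^{i})=p^{\alpha-i}A$, is correct and is exactly what the paper does (this is the content of Lemma~\ref{s} and the proof of Theorem~\ref{uniform1}). In fact your argument gives the stronger Properties~$1'$ and~$1''$, since nothing used the repeated element $a$; this is what the paper's Theorem~\ref{main1} actually requires.

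Where you diverge from the paper is in the second half, and the difference matters. You apply Theorem~\ref{71} only to the brace $A$, with parameter $2k$, and then propose to transport the pre-Lie structure on $p^{2k}A$ upward through pullbacks and verify directly that the modified group of flows of the resulting pre-Lie ring $N$ recovers $A/ann(p^{4k})$. You rightly flag this transport-and-verify step as the hard part, and your sketch of how the error terms might be controlled is only a sketch. The paper avoids this verification entirely by a uniqueness argument. It first writes down the pre-Lie ring $(C/ann(p^{2k}),+,\bullet)$ explicitly (Theorem~\ref{preLie}), then builds from it the pseudobrace $C_{2}$ via the modified group of flows, and proves that $C_{2}$ \emph{also} satisfies Properties~$1$ and~$2$ (Lemmas~\ref{property3} and~\ref{property2}, the latter resting on the injectivity of $W$ in Theorem~\ref{23}) and that its $\odot'$ coincides with the brace's $\odot$ (Lemma~\ref{c}, which is where the strong nilpotency of $p^{k}A$ and the passage of \cite{passage} enter). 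Theorem~\ref{71} is then applied to \emph{both} the brace $C_{1}$ and the pseudobrace $C_{2}$: since they share the same additive group, the same $\odot$, and the same hypotheses, the formula forces their $*$-operations to agree on the further quotient by $Ann(p^{2k})$, giving $C/ann(p^{4k})$ as a modified group of flows.

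So the paper trades your direct verification for checking that the group-of-flows pseudobrace satisfies the hypotheses of Theorem~\ref{71}; this is still work (Lemmas~\ref{property2} and~\ref{c} are not trivial), but it is structured work rather than a term-by-term comparison. Your route is not obviously wrong, but the step you call hard is genuinely hard as stated, and the paper's two-sided use of Theorem~\ref{71} is the idea that dissolves it.
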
 
 It is known that if $A$ is a brace satisfying the assumptions of either Theorem \ref{rank} or of Theorem \ref{uniform} then $ann(p^{i})=\{a\in A:p^{i}a=0\}$ is an ideal in $A$ for every $i$ (see \cite{paper1}).

 More details about Theorems \ref{rank} and \ref{uniform} can be found in  Theorems \ref{rank1} and  \ref{uniform1}. We  also prove some more general results,  mainly Theorem \ref{main2}. We first introduce Properties $1'$ and $1''$. 

Let  $A$ be a brace of cardinality $p^{n}$ for a prime number $p>2$ and a natural number $n$. We say that $A$ satisfies property $1'$ if  $A^{\lfloor{\frac {p-1}4}\rfloor }\in pA$.
 We also consider braces satisfying the following Property $1''$: 
  Denote $ann(p^{i})=\{a\in A: p^{i}a=0\}$. We say that $A$ satisfies property $1''$ if   $A*(A*\cdots A*ann(p^{i}))\subseteq p\cdot ann(p^{i})$  
for $i=1,2, \ldots $, 
 where $A$ appears $\lfloor {\frac {p-1}4}\rfloor$ times in this expression.

\begin{theorem}\label{main2} Let $A$ be a brace of cardinality $p^{n}$ for some prime number $p$ and a natural number $n$. Suppose that $k$ is such that $p^{k(p-1)}A=0$.
Suppose that  $A$ satisfies properties $1'$ and $1''$.
 Then the brace $A/ann(p^{4k})$ is obtained from the left nilpotent  pre-Lie ring $(A/ann(p^{4k}), +, \bullet)$ obtained in Theorem \ref{main} by the construction of modified group of flows.
 \end{theorem}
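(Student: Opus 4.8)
The plan is to deduce Theorem \ref{main2} from Theorem \ref{main} (equivalently from Theorem \ref{nareszcie1}) applied with the parameter $2k$ in place of $k$, and then to recognise the algebraic formula it produces as the modified group-of-flows construction.

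First I would check that Properties $1'$ and $1''$ imply Property $1$. If $A^{\lfloor (p-1)/4\rfloor}\subseteq pA$, then for any $a,b\in A$ the element $a*(a*(\cdots *a*b))$ with $a$ occurring $\lfloor (p-1)/4\rfloor$ times lies in $A^{\lfloor (p-1)/4\rfloor +1}\subseteq A^{\lfloor (p-1)/4\rfloor}\subseteq pA$, since the left nilpotent series is decreasing; this is the first clause of Property $1$. As $p\cdot ann(p^{i})\subseteq ann(p^{i-1})$, Property $1''$ gives the second clause. Hence $A$ satisfies Property $1$, so by Proposition \ref{5} the subgroups $ann(p^{i})$ and $p^{i}A$ are ideals with $(p^{i}A)*ann(p^{j})\subseteq ann(p^{j-i})$ for $j\geq i$, and by Theorem \ref{f(a)} the brace $A$ satisfies Property $2$ for every parameter, in particular for $2k$.

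Next, from $p^{k(p-1)}A=0$ we get $p^{(2k)(p-1)}A=0$, so Theorem \ref{odot} applies with parameter $2k$: the operation $[x]\odot[y]=[\wp^{-1}((p^{2k}x)*y)]$ with $\wp^{-1}=(\rho^{-1})^{2k}$ is well defined on $A/ann(p^{4k})$. Theorem \ref{nareszcie1} used with $2k$ then shows that $[a]*[b]$ in $A/ann(p^{4k})$ is obtained from $[a]$ and $[b]$ by a fixed finite sequence of the operations $+$, $\odot$ and $\rho^{-1}$, the sequence depending only on the additive group of $A$ and the result being independent of the choice of $\rho^{-1}$; moreover, as in Theorem \ref{mily}, each $\rho^{-1}$ in the sequence is applied only to elements of $pA$. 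Since $p^{2k}A$ is strongly nilpotent (\cite{paper1}) and $A^{n+1}=0$, the brace-to-pre-Lie correspondence of \cite{asas, passage}, transported along this formula, yields a left nilpotent pre-Lie product $\bullet$ on $A/ann(p^{4k})$ with the same underlying additive group; this is the ring $(A/ann(p^{4k}),+,\bullet)$ of the statement. The remaining task is to verify, term by term, that the above sequence of operations is exactly the Agrachev--Gamkrelidze flow series \cite{AG} of $(A/ann(p^{4k}),+,\bullet)$ with every division by an integer replaced by the corresponding iterate of $\rho^{-1}$: the operation $\odot$ contributes a $\wp^{-1}$-renormalised copy of the pre-Lie product, the vanishing $A^{n+1}=0$ truncates the series, and well-definedness together with independence of $\rho^{-1}$ are inherited from Theorems \ref{clever} and \ref{nareszcie1}.

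The routine part is everything up to the last sentence: feeding the hypotheses into Theorems \ref{odot}, \ref{f(a)}, Proposition \ref{5} and Theorem \ref{nareszcie1}. The main obstacle is the final identification --- matching the formula of Theorem \ref{nareszcie1} with the classical group-of-flows series and checking that the extracted pre-Lie ring is genuinely left nilpotent with the prescribed additive group. I expect this to rely on the detailed analysis of the elements $e_{i}$, $\tilde{e}_{i}$, $\bar{e}_{i}$ and of $\odot$ from Sections \ref{2222}--\ref{31}, together with the $p$-divisibility estimates that legitimise $\rho^{-1}$ as a substitute for division; the delicate point is that $p$ may be small compared with $n$, so one can never divide by $p$ outright and must track that every division demanded by the flow series lands on a $p$-divisible element.
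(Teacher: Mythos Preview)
Your reduction to Theorem \ref{nareszcie1} and your verification that Properties $1'$, $1''$ imply Properties $1$ and $2$ are fine, but the final step --- directly matching the formula produced by Theorem \ref{nareszcie1} with the Agrachev--Gamkrelidze series --- is not what the paper does, and it is not clear it can be carried out at all. The formula from Theorem \ref{nareszcie1} is built from $p^{n}!-1$ iterations of the map $g$ together with the nested $e_i$, $\tilde e_i$, $\bar e_i$ machinery; a term-by-term identification with the flow series has no obvious structure to exploit.

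The paper sidesteps this entirely by a \emph{comparison} argument. One first constructs, from the pre-Lie ring $(A/ann(p^{2k}),+,\bullet)$, a second structure $(A/ann(p^{2k}),+,\circ')$ via the modified group of flows, and proves it is a pseudobrace (Theorem \ref{w}) satisfying Properties $1$ and $2$ (Lemmas \ref{property3}, \ref{property2}). The crucial lemma (Lemma \ref{c}) shows that the operations $\odot$ and $\odot'$ coincide on the further quotient by $Ann(p^{2k})$: this is proved by observing that on $p^{k}A$ both structures are obtained from the \emph{same} pre-Lie ring by the ordinary (not modified) group of flows, because $p^{k}A$ is strongly nilpotent of index less than $p$. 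Now Theorem \ref{71} is applied to \emph{both} the brace and the pseudobrace: since the formula it produces depends only on $+$, $\odot$, $\rho^{-1}$ and the additive group, and these all agree, one concludes $*=*'$ on $A/ann(p^{4k})$ without ever unwinding either formula. The missing idea in your plan is precisely this double application of Theorem \ref{71} together with the equality $\odot=\odot'$, which replaces the intractable identification you flagged as the main obstacle.
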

 Observe that the assumptions of Theorem \ref{main} are stronger than 
 in the main result of \cite{paper1}, but we also obtain the stronger result than in \cite{paper1}, as we have the concrete formula for the passage from pre-Lie rings to braces, and this formula is 
 the same as the formula for the group of flows in \cite{AG}  but instead of a division by $p$ a function $\rho ^{-1}$ is used resembling division by $p$. Moreover, the result does not depend on the choice of function $\rho ^{-1}$. 
 See also \cite{M}, \cite{passage} for a description of this group in a notation which will be used in this paper.

 The above result shows connections between some types of braces  and pre-Lie rings outside of Lazard's correspondence. 

The idea of applying the pullback function $\rho ^{-1}$ was inspired by papers \cite{lazard, shalev} where another formula depending on $p$, namely $(a^{p}\circ b^{p})^{1/p}$, appeared in the context of group theory.
 Connections between finite braces and pre-Lie rings inside the context of Lazard's correspondence were investigated in \cite{rump,  BA, passage, Sheng, Senne, Doikou}. However, none of these papers considered the case of finite braces  outside of the context  of Lazard's correspondence. 
 In \cite{shalev}, Aner Shalev constructed Lie rings associated to uniform groups outside of the Lazard's correspondence. 

 We will now give an outline of the paper. Section $2$ contains basic background information on braces and pre-Lie rings. Section $3$ contains background notation. It contains the main definitions for the  notation used in the paper. Section $3$ has the following subsections:
(3.1) Pullback introduces function $\wp ^{-1}$, which is similar to division by $p^{k}$. It also introduces function $\rho ^{-1}$ which resembles division by $p$. (3.2) Pseudobraces. This section introduces pseudobraces, a generalisation of braces. When we apply a modified group of flows to a pre-Lie ring we obtain a pseudobrace \cite{paper1}.
Every brace is a pseudobrace. (3.3) Section (3.3) introduces operation $\odot $ for braces and pseudobraces. We have $[a]\odot [b]=[\wp^{-1}((pa)*b)]$.
(3.4), (3.5) and (3.6) In these sections we introduce properties $1$ and $2$ for braces, which are   used as assumptions throughout the paper. We also prove some results about these properties. We also review properties $1'$ and $1''$ from the introduction.
 Section (4) introduces a generalisation of the construction obf the group of flows from \cite{AG}. In section (5) it is shown that generalised groups of flows are pseudobraces.
 In Section $6$ we prove the main results of the paper.

\section{Background information } 
 We will use the same notation as in \cite{passage, asas, paper1}. We recall this notation below for the convenience of the reader:
\subsection{Braces}
A set $A$ with binary operations $+$ and $* $ is a {\em  left brace} if $(A, +)$ is an abelian group and the following  version of distributivity combined with associativity holds.
  \[(a+b+a*b)* c=a* c+b* c+a* (b* c), \space  a* (b+c)=a* b+a* c,\]
for all $a, b, c\in A$; moreover  $(A, \circ )$ is a group, where we define $a\circ b=a+b+a* b$.
In what follows we will use the definition in terms of the operation `$\circ $' presented in \cite{cjo} (see \cite{Rump}
for the original definition): a set $A$ with binary operations of addition $+$ and multiplication $\circ $ is a brace if $(A, +)$ is an abelian group, $(A, \circ )$ is a group and for every $a,b,c\in A$
\[a\circ (b+c)+a=a\circ b+a\circ c.\]
  All braces in this paper are left braces, and we will just call them braces.
 
Let $(A, +, \circ )$ be a brace.  Recall that $I\subseteq A$ is an ideal in $A$ if for
$i,j\in I$, $a\in A$ we have $i+j\in I, i-j\in I, i*a, a*i\in I$ where $a*b=a \circ b-a-b$.

Let $A$ be a brace and $a\in A$ then we denote $a^{\circ j}=a\circ a\cdots \circ a$
  where $a$ appears $j$ times in the product on the right hand side.  By $A^{\circ p^{i}}$ we will   denote the subgroup of the multiplicative group $(A, \circ)$ of brace $A$ generated by $a^{\circ p^{i}}$ for $a\in A$. 
  In general for a set $S\subseteq A$, by $S^{\circ {p^{i}}}$ we denote the subgroup of $(A, \circ )$ generated by elements $s^{\circ {p^{i}}}$ for $s\in S$. 

Let $A$ be a brace. Let $B, C\subseteq A$, then by $B*C$ we define the additive subgroup of $A$ generated by 
 elements $b*c$ where $b\in B$, $c\in C$.

 Recall that $A^{1}=A$ and inductively $A^{i+1}=A*A^{i}$ is the left nilpotent series of $A$. This series was introduced in \cite{Rump} by Wolfgang  Rump, and 
 he showed that if $A$ is a brace of cardinality $p^{n}$ for a prime number $p$ and a natural number $n$ then $A^{n+1}=0$. 

  Recall that if $I$ is an ideal in the brace $A$ then the factor brace $A/I$ is well defined.
The elements of the brace $A/I$ are cosets $[a]_{I}:= a + I=\{a+i:i\in I\}$ where $a \in A$, which we will simply denote by $[a]_{I}$, so $[a]_{I} =[b]_{I}$ if and only if $a-b\in I$.
 In all factor braces we will use operations $+, \circ $  from brace $A$ to denote operations in the factor brace $A/I$ (with a slight abuse of notation).

   Let $(A, +, \circ )$ be a brace. One of the mappings used in connection with braces are the  maps $\lambda _{a}:A\rightarrow A$ for $a\in A$. Recall that  for $a,b, c\in A$ 
   we have $\lambda _{a}(b) = a \circ  b - b$, $\lambda _{a\circ c}(b) = \lambda _{a}(\lambda _{c}(b))$.

\subsection{Pre-Lie rings}\label{pre-Lie}\label{preLi}
 Recall that a   {\em pre-Lie ring} $(A, +, \cdot)$ is  an abelian group $(A,+)$ with a binary operation $(x, y) \rightarrow  x\cdot y$ such that 
\[(x\cdot y)\cdot z -x\cdot (y\cdot z) = (y\cdot x)\cdot z - y\cdot (x\cdot z)\]
and $(x+y)\cdot z=x\cdot z+y\cdot z, x\cdot (y+z)=x\cdot y+x\cdot z,$
 for every $x,y,z\in A$. A pre-Lie ring $A$  is {\em  nilpotent} or {\em strongly nilpotent}   if for some $n\in \mathbb N$ all products of $n$ elements in $A$ are zero. We say that $A$ is {\em left nilpotent} if for some $n$, we have $a_{1}\cdot (a_{2}\cdot( a_{3}\cdot (\cdots  a_{n})\cdots ))=0$ for all  $a_{1}, a_{2}, \ldots , a_{n}\in A$.

We will now use the construction of pre-Lie ring associated to brace $A$ from \cite{paper1}.
 We will use $\bullet $ from \cite{paper1} multiplied by $-(1+p+\cdots+ p^{n})$, as it also gives a pre-Lie multiplication on $A/ann(p^{2k})$. Notice that $p^{n}A=0$ by the Lagrange theorem applied to the additive group $(A, +)$.

 The following Theorem immediately follows from Theorem $16$, \cite{paper1}.

\begin{theorem} \label{preLie}  
Let $A$ be a brace of cardinality $p^{n}$ for a prime number $p$ and a natural number $n$. Suppose moreover that $ann(p^{i})$, $p^{i}A$ are ideals in $A$ for every $i$ and that $(p^{i}A)*ann(p^{j})\subseteq ann(p^{j-i})$ for all $j\geq i$.  
We define pre-Lie ring associated to $A$ similarly as in Theorem $16$,  \cite{paper1}.
 We define:

 \[[a]\bullet [b]=-(1+p+\ldots +p^{n})\sum_{i=0}^{p-2}\xi ^{p-1-i}((\xi ^{i}p^{k}a)* b),\]
where $\xi =\gamma  ^{p^{n-1}}$ where $\gamma $ is a primitive root modulo $p^{n}$.
Then  $(A/ann(p^{2k}),+, \bullet )$ is a left nilpotent pre-Lie ring. \end{theorem}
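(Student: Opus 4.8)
The plan is to deduce the statement directly from Theorem $16$ of \cite{paper1} together with an elementary observation about rescaling a pre-Lie multiplication. First I would check that the hypotheses here --- $A$ a brace of cardinality $p^{n}$ for which $ann(p^{i})$ and $p^{i}A$ are ideals for every $i$ and $(p^{i}A)*ann(p^{j})\subseteq ann(p^{j-i})$ for all $j\geq i$ --- are exactly those under which Theorem $16$ of \cite{paper1} produces a well-defined left nilpotent pre-Lie multiplication $\bullet_{0}$ on $A/ann(p^{2k})$, namely the one given by the displayed formula without the scalar factor, $[a]\bullet_{0}[b]=\sum_{i=0}^{p-2}\xi^{p-1-i}((\xi^{i}p^{k}a)*b)$ with $\xi=\gamma^{p^{n-1}}$. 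Then $[a]\bullet[b]=-(1+p+\cdots+p^{n})\,[a]\bullet_{0}[b]$, so it suffices to show that multiplying a left nilpotent pre-Lie product by a fixed integer scalar again produces a left nilpotent pre-Lie product; well-definedness on the quotient is then automatic, since $\bullet$ is an integer multiple of the already well-defined $\bullet_{0}$.

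The key step is a general fact. Let $(V,+,\bullet_{0})$ be a pre-Lie ring, let $c\in\mathbb{Z}$, and define $x\bullet y=c(x\bullet_{0}y)$. Using that $\bullet_{0}$ is additive in each argument, one has $(c(x\bullet_{0}y))\bullet z=c^{2}((x\bullet_{0}y)\bullet_{0}z)$ and $x\bullet(c(y\bullet_{0}z))=c^{2}(x\bullet_{0}(y\bullet_{0}z))$, so the left-hand side of the pre-Lie identity for $\bullet$ equals $c^{2}$ times the left-hand side of the pre-Lie identity for $\bullet_{0}$, and likewise for the right-hand side; hence $\bullet$ satisfies the pre-Lie identity because $\bullet_{0}$ does. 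The same additivity argument shows, by a short induction, that a left-normed product of $m$ elements under $\bullet$ equals $c^{m-1}$ times the corresponding left-normed product under $\bullet_{0}$, so if $\bullet_{0}$ is left nilpotent then so is $\bullet$. Applying this with $V=A/ann(p^{2k})$ and $c=-(1+p+\cdots+p^{n})$ gives the theorem.

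I would also record the remark that, since $1+p+\cdots+p^{n}\equiv 1\pmod p$ and $(A/ann(p^{2k}),+)$ is a finite abelian $p$-group, multiplication by $c=-(1+p+\cdots+p^{n})$ is an automorphism of the additive group; consequently scaling the multiplication by $c^{-1}$ is an isomorphism of pre-Lie rings from $(A/ann(p^{2k}),+,\bullet)$ onto $(A/ann(p^{2k}),+,\bullet_{0})$. This clarifies that the particular scalar is chosen only so that $\bullet$ matches the modified group of flows formula used in the later theorems, and does not affect the pre-Lie or nilpotency assertions.

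The main obstacle is not in the argument above, which is routine; it lies entirely in Theorem $16$ of \cite{paper1}. The only genuine checks on this side are that the hypotheses stated here are precisely those needed to invoke that theorem for the given $k$ --- in particular that $[a]\bullet_{0}[b]$ is independent of the choice of coset representatives of $[a]$ and $[b]$ modulo $ann(p^{2k})$, which is part of the conclusion of Theorem $16$ --- and the bookkeeping showing that left nilpotency is preserved under rescaling.
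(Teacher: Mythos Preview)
Your approach is essentially the same as the paper's: reduce to the cited results in \cite{paper1} and note that the extra integer scalar $-(1+p+\cdots+p^{n})$ does not disturb either the pre-Lie identity or left nilpotency. Your explicit verification of the scaling step is more detailed than the paper, which simply states in the preamble that multiplying the operation from \cite{paper1} by this scalar ``also gives a pre-Lie multiplication.'' One small bibliographic correction: the paper attributes only the pre-Lie property to Theorem~16 of \cite{paper1} and invokes Proposition~18 of \cite{paper1} separately for left nilpotency, whereas you bundle both conclusions into Theorem~16; you should split the citation accordingly, but this does not affect the argument.
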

\begin{proof} The pre-Lie ring property  immediately follows from  Theorem $16$ \cite{paper1}.
 The fact that the obtained pre-Lie ring is left nilpotent immediately follows from Proposition $18$, \cite{paper1}. 

\end{proof}

\section{Background notation}

\subsection{Pullback}\label{pullback}
In this section we  will use the same definition of a pullback as in \cite{paper3}. We recall the definition for the conveniece of the reader:  
Let $A$ be a brace of cardinality $p^{n}$ for some prime number $p$ and some natural number $n$. 
Fix a natural number $k$.  
For $a\in p^{k}A$ let $\wp^{-1}(a)$ denote an element $x\in A$ such that $p^{k}x=a$. Such an element may not be uniquely determined in $A$, but we can fix for every $a\in p^{k}A$ such an element $\wp^{-1}(a)\in A$.
Notice that $p^{k}(\wp^{-1}(a))=p^{k}x=a$.

 For $A$ as above, $ann(p^{i})=\{a\in A:p^{i}a=0\}$. 
 We recall a result from \cite{paper1}.
\begin{lemma}\label{33} Let $A$ be a brace and let $I$ be an ideal in $A$. Let $A/I$ be defined as above. Assume that $ann(p^{k})\subseteq I$. 
Let $\wp^{-1} : p^{k}A \rightarrow A$ be defined as above. Then, for $a, b\in p^{k}A$ we have
$[\wp^{-1}(a)]_{I} + [\wp^{-1}(b)]_{I} = [\wp^{-1}(a + b)]_{I}.$ 
This implies that for any integer $m$ we have
 $[m \wp^{-1}(a)]_{I} = [\wp^{-1}(ma)]_{I}.$
\end{lemma}

 As per \cite{paper3}, in this paper we will assume that 
\[\wp^{-1}(a)=(\rho ^{-1} )^{k}(a)=\rho^{-1}( \cdots (\rho^{-1}(a)\cdots )),\] where $\rho ^{-1}:pA\rightarrow A$ is a given function $\rho ^{-1}:pA\rightarrow A$ such that $p\cdot \rho^{-1}(x)=x$, for $x\in pA$.
 Observe that
\[\rho ^{-1}(a+b)-\rho^{-1}(a)-\rho ^{-1}(b)\in ann(p),\]
for all $a, b\in A$.
 Lemma \ref{33} holds for function $\rho ^{-1}$ when we assume that  $k=1$.

\subsection{Pseudobraces} 
In this section we  introduce pseudobraces. All braces are pseudobraces.

\begin{definition}\label{pseudobrace}
Let $A$ be a set, and let $+$ and $\circ $ be binary operations of $A$. Let $a,b\in A$. We denote $a\circ b=a*b-a-b$.  
We denote  $a^{1}=a$ and inductively $a^{\circ  i+1}=a\circ a^{\circ  i}$.
We say that $A$ is a pseudobrace if the following properties hold:
\begin{enumerate} 
\item $(A,+)$ is an abelian group of cardinality $p^{n}$ for some prime number $p$ and some natural number $n$. 
\item For every $a\in A$ there is an element in $A$, denoted as $a^{\circ  -1}$, such that $a\circ a^{\circ -1}=0$, where $0$ is the identity in the group $(A,+)$.
 We denote $a^{\circ -2}=a^{\circ -1}\circ a^{\circ -1}$, $a^{\circ -3}=a^{\circ -1}\circ a^{\circ -2}$, etc..
\item For all integers $i,j$ and every $a,b\in A$, $(a^{\circ  i}\circ a^{\circ j})\circ b=a^{ \circ i+j}\circ b$ 
 and $a^{\circ i}\circ a^{\circ j}=a^{\circ i+j}$, $a^{circ i}\circ b=a\circ (a\circ \cdots a\circ b))$ where $a$ appears $i$ times on the right-hand side of this equation. 
\item $A^{n+1}=0$ where $A^{1}=A$ and inductively $A^{i+1}=A*A^{i}$.  
\item $a*(b+c)=a*b+a*c$ for all $a,b,c\in A$.
\end{enumerate}
\end{definition}

$ $

 The group $(A, +)$ is called the additive group of pseudobrace $A$.

$ $

For a pseudobrace $A$ we define $p^{i}A=\{p^{i}a:a\in A \}$, $ann(p^{i})=\{a\in A:p^{i}A=0\}$.  

$ $

We say that a subset $I\subseteq A$ is an {\em ideal} in a pseudobrace $A$ if 
for $i,j\in I$, $a,b\in A$ we have $i+j\in I, i-j\in I$, $a*i, i*a\in I$ and $(a+i)*(b+j)-a*b\in I$. Observe that this implies that the factor pseudorace $A/I$ is well defined and elements of 
$A/I$ are sets $[a]_{I}=\{a+i:i\in I\}$.  We have $[a]_{I}+[b]_{I}=[a+b]_{I}$, $[a]_{I}*[b]_{I}=[a*b]_{I}$. 

Note that if $A$ is a brace this definition coincides with the normal definition of an ideal in a brace, because for an ideal in a brace the factor brace is well defined \cite{BaPhd, Rump}.

$ $

Let $b\in A$. We say that a subset $L(b)\subseteq A$ is an {\em left ideal} in a pseudobrace $A$ generated by $b$ if 
 $b\in L(b)$ and for for $i,j\in L(b)$ and every  $a\in A$ we have $i+j\in L(b), i-j\in L(b)$, $a*i\in L(b)$.

 We do not know the answer to the following question: 
\begin{question} 
Is every pseudobrace a brace?
\end{question}

\subsection{ Operation $\odot $ for braces and pseudobraces}

In this section we define operation $\odot $ for braces and pseudobraces.

Let $A$ be a pseudobrace of cardinality $p^{n}$ and let $k$ be a natural number. Note that if $ann(p^{2k})=\{a\in A: p^{2k}a=0\}$  is an ideal in the pseudobrace $A$ then the factor brace $A/ ann(p^{2k})$ is well defined.
The elements of the pseudobrace $A/I$ are cosets $[a]_{I}:= a + I=\{a+i:i\in I\}$ where $a \in A$. In this section we denote for $a\in a$,  $[a]_{ann(p^{2})}=[a]$.
  
The following result is a generalisation of Theorem $8.1$, \cite{asas}.
\begin{theorem}\label{odot}
Let $(A, +, \circ )$ be a pseudobrace of cardinality $p^{n}$ for some prime number $p$ and some natural number $n$. Let $k$ be a natural number such that $p^{k(p-1)}A=0$. Assume that $p^{k}A$, $ann(p^{k})$, $ ann(p^{2k})$ are ideals in $A$ and $(p^{k}A)*ann(p^{2k})\subseteq ann(p^{k})$. Let $\wp^{-1}:p^{k}A\rightarrow A$ be defined as in the section \ref{pullback}. Let $\odot $ be defined as 
\[[x]\odot [y]=[\wp^{-1}((p^{k}x)*y)].\] Then $\odot $ is a well defined operation on $A$.
\end{theorem}
\begin{proof} The same proof as in \cite{paper1} works.
\end{proof}

\subsection{Property $1$}\label{property1}
  We start with recaling the definition of Property $1$ from \cite{paper3}. 
\begin{definition}
 Let $A$ be a pseudobrace of cardinality $p^{n}$ for some prime number $p>3$ and some natural number $n$. We will say that $A$ satisfies {\em property $1$} if 
\begin{enumerate}  
 \item for every $a, b\in A$, \[a*(a*( \cdots a*b))\in pA,\]
 \item and for every $a\in A$ and every $i>0$ 
\[a*(a*( \cdots a*ann(p^{i})))\in ann(p^{i-1})\]
\end{enumerate}
 where $a$ appears $\lfloor {\frac {p-1}4}\rfloor$ times in each of the  above expressions. 
\end{definition}

$ $ 
 
We now give examples of braces satisfying Property $1$.

We say that the additive group of a pseudobrace $A$ is {\em uniform} if it is a direct sum of some number of cyclic groups of cardinality $i $, for some $i $.

\begin{lemma}\label{s} Let $A$ be a pseudobrace of  cardinality $p^{n}$ for some prime number $p$ and some natural number $n$. Suppose  that the additive group of $A$ is uniform. Suppose also  that  for each $a,b\in A$  
  \[a*(a* \cdots a*b))\in pA\] where $a$ appears $\lfloor {\frac {p-1}4}\rfloor$ times in this expression. Then $A$ satisfies 
property $1$.
\end{lemma}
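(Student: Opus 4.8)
The plan is to deduce condition (2) in the definition of Property $1$ from condition (1), which is the standing hypothesis, using only the uniformity of $(A,+)$ together with additivity of $*$ in its second argument. The first thing to do is to describe the annihilators explicitly. Since $(A,+)$ is uniform, it is a direct sum of cyclic groups all of a common order $p^{\alpha}$, and $p^{\alpha}A=0$ by Lagrange's theorem. In each cyclic summand $\mathbb{Z}/p^{\alpha}$ the subgroup of elements killed by $p^{i}$ equals $p^{\alpha-i}\mathbb{Z}/p^{\alpha}$ for $0\le i\le\alpha$, so passing to direct sums yields $ann(p^{i})=p^{\alpha-i}A$ for $0\le i\le\alpha$ (with the conventions $p^{0}A=A$ and $ann(p^{0})=0$), and $ann(p^{i})=A$ for $i\ge\alpha$.

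Next I would record that for each $a\in A$ the map $x\mapsto a*x$ is additive (axiom (5) of a pseudobrace), hence $a*(mx)=m(a*x)$ for every integer $m$; iterating over the fixed number $\lfloor(p-1)/4\rfloor$ of factors $a$ gives $a*(a*(\cdots *a*(mx)))=m\,(a*(a*(\cdots *a*x)))$ for all $x\in A$ and all integers $m$. Now fix $a$ appearing $\lfloor(p-1)/4\rfloor$ times. If $1\le i\le\alpha$, every element of $ann(p^{i})$ has the form $p^{\alpha-i}x$, so combining the previous identity with the hypothesis (condition (1), which says $a*(a*(\cdots *a*x))\in pA$) gives
\[
a*(a*(\cdots *a*(p^{\alpha-i}x)))=p^{\alpha-i}\bigl(a*(a*(\cdots *a*x))\bigr)\in p^{\alpha-i}(pA)=p^{\alpha-i+1}A=ann(p^{i-1}).
\]
For $i\ge\alpha+1$ the required inclusion is into $ann(p^{i-1})=A$ and is automatic, while for $i=\alpha$ it reads $a*(a*(\cdots *a*A))\subseteq ann(p^{\alpha-1})=pA$, which is again exactly condition (1). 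Hence condition (2) holds and $A$ satisfies Property $1$.

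There is no real obstacle in this argument: the only points requiring attention are the boundary cases $i=\alpha$ and $i>\alpha$ (where $ann(p^{i})$ is all of $A$) and keeping the conventions $ann(p^{0})=0$ and $p^{\alpha}A=0$ consistent, which is why I would pin these down at the outset before running the one-line computation above.
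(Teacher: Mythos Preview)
Your proof is correct and follows essentially the same approach as the paper: identify $ann(p^{i})=p^{\alpha-i}A$ from uniformity, then use additivity of $*$ in the second variable together with the hypothesis to obtain the inclusion into $p^{\alpha-i+1}A=ann(p^{i-1})$. Your version is in fact more careful about the boundary cases $i\ge\alpha$, which the paper's one-line argument leaves implicit.
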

\begin{proof} Suppose that  the additive group of $A$ be a direct sum of cyclic 
 groups of cardinality $p^{\alpha }$ for some $\alpha $. Then $ann(p^{i})=p^{\alpha -i}A$, for each $i$. Therefore, 
   $ a*(a* \cdots a*ann(p^{i}))=a*(a* \cdots a*p^{\alpha -i}A)\in p\cdot p^{\alpha -i}A=p^{\alpha +1-i}A=ann(p^{i-1})$.
\end{proof}

\begin{lemma}\label{q} Let $A$ be a pseudobrace of cardinality $p^{n}$ for some prime number $p>3$ and some natural number $n$. Suppose that 
  for every $a, b\in A$, 
\[a*(a* \cdots a*b))\in pL(b)\] where $L(b)$ is the left ideal generated by $b$ in the pseudobrace $A$, and $a$ appears $\lfloor {\frac {p-1}4}\rfloor$ times in this expression. Then $A$ satisfies property $1$.  
\end{lemma}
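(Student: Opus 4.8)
The plan is to reduce Property $1$ to a single structural observation: for every $i$ the subset $ann(p^{i})$ is a left ideal of the pseudobrace $A$ (in the sense of the definition of $L(\cdot)$), and then to exploit minimality of $L(c)$.

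First the easy half. Condition (1) of Property $1$ is immediate from the hypothesis: since $pL(b)\subseteq pA$, the assumption $a*(a*(\cdots a*b))\in pL(b)$, with $a$ appearing $\lfloor\frac{p-1}{4}\rfloor$ times, already gives $a*(a*(\cdots a*b))\in pA$ with the same number of occurrences of $a$. So no adjustment to the number of factors is needed, which is the only bookkeeping point that could cause trouble.

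For condition (2), I would first verify that $ann(p^{i})$ is closed under the three operations that generate $L(\cdot)$, namely $+$, $-$, and left multiplication $a*(-)$ by an arbitrary $a\in A$. Closure under $+$ and $-$ is clear, since $ann(p^{i})$ is a subgroup of $(A,+)$. For closure under $a*(-)$: property (5) of a pseudobrace gives $a*(x+y)=a*x+a*y$; taking $x=y=0$ yields $a*0=0$, and iterating the identity gives $a*(mx)=m(a*x)$ for every integer $m$. Hence if $p^{i}x=0$ then $p^{i}(a*x)=a*(p^{i}x)=a*0=0$, so $a*x\in ann(p^{i})$. Thus $ann(p^{i})$ is a left ideal of $A$.

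Now fix $i>0$ and $c\in ann(p^{i})$. Since $L(c)$ is the left ideal generated by $c$ and $ann(p^{i})$ is a left ideal containing $c$, minimality gives $L(c)\subseteq ann(p^{i})$. Therefore $pL(c)\subseteq p\cdot ann(p^{i})\subseteq ann(p^{i-1})$, the last inclusion because $p^{i-1}(px)=p^{i}x=0$ for $x\in ann(p^{i})$. Applying the hypothesis with $b=c$ yields $a*(a*(\cdots a*c))\in pL(c)\subseteq ann(p^{i-1})$ with $a$ appearing $\lfloor\frac{p-1}{4}\rfloor$ times, which is exactly condition (2). Hence $A$ satisfies Property $1$. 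There is no real obstacle in this argument; everything reduces to the formal fact that $ann(p^{i})$ is a left ideal, which in turn is just additivity of $*$ in its second variable.
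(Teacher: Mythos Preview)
Your argument is correct and follows the same route as the paper's proof, which simply observes that for $b\in ann(p^{i})$ one has $pL(b)\subseteq p\cdot ann(p^{i})\subseteq ann(p^{i-1})$. You have merely made explicit the underlying step that $ann(p^{i})$ is itself a left ideal (hence contains $L(b)$), which the paper leaves implicit.
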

\begin{proof} It follows because for $b\in ann(p^{i})$, we have $pL(b)\subseteq p\cdot ann(p^{i})\subseteq ann(p^{i-1})$.
\end{proof}
We say that the additive group of a pseudobrace $A$ has {\em rank} at most $c$  if it is a direct sum of at most  $c$ cyclic groups.

 In \cite{BA} Bachiller investigated braces whose additive groups have small rank. The following result in a generalisation of Lemma $2.7$ from \cite{BA}.
\begin{lemma}\label{r} Let $A$ be a pseudobrace of a cardinality $p^{n}$ for some prime number $p$ and a natural number $n$. Suppose  that the additive group of $A$ is has rank at most $\lfloor {\frac {p-1}4}\rfloor$.   
 Then $A$ satisfies 
property $1$. Moreover, $a_{1}*(a_{2}*( \ldots a_{c}*b))\in pL(b)$ for every $a_{1}, a_{2}, \ldots , a_{c}, b\in A$. Hence properties $1'$ and $1''$ hold.
\end{lemma}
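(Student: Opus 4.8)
\emph{Proof plan.} The plan is to reduce the whole statement to the displayed ``moreover'' inclusion and to prove that one by a nilpotent linear-algebra argument carried out inside the left ideal $L(b)$. First I would note that it suffices to prove
\[a_{1}*(a_{2}*(\cdots *(a_{c}*b)))\in pL(b),\qquad c=\left\lfloor \frac{p-1}{4}\right\rfloor ,\]
for all $a_{1},\ldots ,a_{c},b\in A$. Granting this, Property $1$ follows on setting $a_{1}=\cdots =a_{c}=a$: part (1) because $pL(b)\subseteq pA$, and part (2) because for $b\in ann(p^{i})$ the set $ann(p^{i})$ is closed under $x\mapsto a*x$ (indeed $p^{i}(a*x)=a*(p^{i}x)=a*0=0$ by distributivity) and under subtraction, hence contains $L(b)$, so $a*(\cdots *(a*b))\in p\cdot ann(p^{i})\subseteq ann(p^{i-1})$. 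The same inclusion, now with $b$ ranging over $A$ (resp.\ over $ann(p^{i})$) and the $a_{j}$ over $A$, yields Properties $1'$ and $1''$.

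For the main inclusion, fix $b\in A$ and put $L=L(b)$. Since $L$ is a left ideal, for each $a\in A$ the map $M_{a}\colon L\to L$, $M_{a}(x)=a*x$, is a well-defined additive endomorphism of $L$ (additivity is the pseudobrace axiom $a*(x+y)=a*x+a*y$), and $a_{1}*(\cdots *(a_{c}*b))=(M_{a_{1}}\circ \cdots \circ M_{a_{c}})(b)$. Pass to $V=L/pL$, an $\mathbb{F}_{p}$-vector space whose dimension is the rank $r$ of $L$. Since $L$ is a subgroup of the finite abelian $p$-group $(A,+)$ we have $L[p]\subseteq A[p]$, so $r=\dim_{\mathbb{F}_{p}}L[p]\le \dim_{\mathbb{F}_{p}}A[p]=\mathrm{rank}(A)\le c$. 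Each $M_{a}$ induces $\bar M_{a}\in \mathrm{End}_{\mathbb{F}_{p}}(V)$, and it remains to show that $\bar M_{a_{1}}\cdots \bar M_{a_{c}}$ is the zero operator on $V$.

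For this I would use the left nilpotency that is part of the definition: $A^{n+1}=0$, so for any $a_{1},\ldots ,a_{n}\in A$ and $x\in A$ one has $a_{1}*(\cdots *(a_{n}*x))\in A^{n+1}=0$; hence $M_{a_{1}}\cdots M_{a_{n}}=0$ on $A$, and therefore on $L$ and on $V$. Consequently the associative (non-unital) subalgebra $\mathcal{N}\subseteq \mathrm{End}_{\mathbb{F}_{p}}(V)$ generated by $\{\bar M_{a}:a\in A\}$ satisfies $\mathcal{N}^{n}=0$, i.e.\ it is a nilpotent subalgebra of $\mathrm{End}_{\mathbb{F}_{p}}(V)\cong M_{r}(\mathbb{F}_{p})$. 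A flag argument upgrades this to $\mathcal{N}^{r}=0$: the chain $V\supseteq \mathcal{N}V\supseteq \mathcal{N}^{2}V\supseteq \cdots$ decreases strictly in dimension until it reaches $0$ (if $\mathcal{N}\cdot \mathcal{N}^{i}V=\mathcal{N}^{i}V\ne 0$ then $\mathcal{N}^{m}\cdot \mathcal{N}^{i}V=\mathcal{N}^{i}V\ne 0$ for all $m$, contradicting $\mathcal{N}^{n}=0$), so it reaches $0$ within $\dim V=r$ steps. Since $c\ge r$, $\mathcal{N}^{c}=0$; in particular $\bar M_{a_{1}}\cdots \bar M_{a_{c}}=0$, whence $a_{1}*(\cdots *(a_{c}*b))\in pL$, which is what was needed.

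The step I expect to be the real obstacle is not the linear algebra (routine once phrased this way) but the bookkeeping: the argument produces products of $r=\mathrm{rank}(L(b))\le \left\lfloor \frac{p-1}{4}\right\rfloor $ left multiplications that vanish modulo $p$, and one has to line this up carefully with the precise statements of Properties $1$, $1'$, $1''$ --- keeping track of the floor, of the strict versus non-strict inequality in the definition of Property $1'$, and of the convention $A^{i+1}=A*A^{i}$. A secondary point, needed only for the pseudobrace generality, is to observe that nothing about $\circ $ is used beyond the existence of the left ideal $L(b)$, the axiom $A^{n+1}=0$, and the distributivity $a*(x+y)=a*x+a*y$; in particular neither associativity of $\circ $ nor bijectivity of the maps $\lambda _{a}$ enters.
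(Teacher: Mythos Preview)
Your proposal is correct and follows essentially the same route as the paper. Both arguments pass to $V=L(b)/pL(b)$, use $A^{n+1}=0$ to make the induced left-multiplication operators nilpotent, and then exploit $\dim_{\mathbb{F}_p}V\le c$ to force any length-$c$ product to vanish. The only cosmetic difference is that the paper builds an explicit basis adapted to the descending chain $A_1=L(b)$, $A_{i+1}=A*A_i$ (so each $L_a'$ becomes strictly lower triangular), whereas you run the same filtration abstractly as $V\supseteq \mathcal{N}V\supseteq \mathcal{N}^2V\supseteq\cdots$; these two flags coincide, so the proofs are really one and the same. Your flagged worry about the off-by-one bookkeeping in matching the conclusion to the exact wording of Properties $1'$, $1''$ is legitimate and is present in the paper's own account as well.
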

\begin{proof} We will use a similar proof to the proof  of Lemma $2.7$ \cite{BA}. It suffices to show that $A$ satisfies assumptions of Lemma \ref{q}. 
Let $L_{a}: L(b)\rightarrow L(b)$ be given by $L_{a}(b)=a*b$.  
 Observe that $L(b)/pL(b)$ is a linear space over the field ${\mathbb Z}_{p}$
 of integers modulo $p$, the dimension of this linear space is at most $c$. Let $a, c\in A$, then  $L _{a}'(c+pL(b))=a*c+pL(b)$ is a linear map over the field ${\mathbb Z}_{p}$ (where $c\in L(b), c+pL(b)\in L(b)/pL(b)$).
  Observe that this is well defined, since in every pseudobrace $a*(pc)=a*(c+\ldots +c)=a*c+\ldots + a*c=p(a*c)$. 
 Define sets $A_{1}=L(b)$, and inductively $A_{i+1}=A*L_{i}$. Note that $A_{n+1}=0$ since $A^{n+1}=0$ (by the definition of pseudobrace).
 We will now choose a base of the linear space $L(b)/pL(b)$. Let first elements of this base span the set $A_{i}$ where $i$ is maximal possible. 
 We then add the next elements, such that the set of all so far choosen base elements spans  
 the set $A_{i-1}+A_{i}$. Continuing in this way we obtain a base of $L(b)/pL(b)$. 
 
 Observe that in this base $L_{a}'$  corresponds to a lower triangular  matrix. So $L_{a_{1}}'\ldots L_{a_{c}}'(b)=0$ for every $a_{1}, \ldots , a_{c}, b\in A$.  This follows because $L_{a}(A^{j})\subseteq A^{j+1}$ for every $j$.
 Consequently, $L_{a_{1}}'\ldots L_{a_{c}}'(b)\in pL(b).$
\end{proof}

\subsection{Properties $1'$ and $1''$. }
Let  $A$ be a pseudobrace of cardinality $p^{n}$ for a prime number $p>2$ and a natural number $n$.
 Then we define the following properties on the pseudobrace $A$:
\begin{itemize}
\item  We say that $A$ satisfies property $1'$ if  $A^{\lfloor{\frac {p-1}4}\rfloor }\in pA$. 
  Denote $ann(p^{i})=\{a\in A: p^{i}a=0\}$. We say that $A$ satisfies property $1''$ if    
$A*(A\cdots *(A*(ann(p^{i}))))\in p\cdot  ann(p^{i}),$
for $i=1,2, \ldots $  where $A$ appears $\lfloor {\frac {p-1}4}\rfloor$ times in this expression.
\end{itemize}
Note  that if a pseudobrace $A$ satisfies properties $1'$ and $1''$ then it satisfies property $1$.

\subsection{Property $2$}\label{property2}

Let $A$ be a pseudobrace of cardinality $p^{n}$ for some prime number $p>3$ and some natural number $n$. We fix natural number $k$ such that $p^{k(p-1)}A=0$. We recall the definition of Property $2$ from \cite{paper3}. 

\begin{definition} We say that  a pseudobrace $A$  satisfies Property $2$ if  for each $a,b\in A$, 
 we have that $p^{2k}a=p^{2k}b$ if and only if $p^{k}a^{\circ p^{k}}=p^{k}b^{\circ p^{k}}$. We also assume that  $p^{i}A$, $ann(p^{i})$ are ideals in $A$ for each $i$ and $(p^{i}A)*ann(p^{j})\subseteq ann(p^{j-i})$ for all $j\geq i$.
\end{definition}
The following lemma also appears in \cite{paper3} but we recall its short proof for a convenience of the reader.

\begin{lemma}\label{j} Let $A$ be a pseudobrace of cardinality $p^{n}$ for some prime number $p$ and some $n$.  Let $f:A\rightarrow A$ be a function  such that $p^{k}f(a)=a^{\circ p^{k}}$ for each $a\in A$.
  Then property $2$ implies that  the map $[a]_{ann (p^{2k})}\rightarrow [f(a)]_{ann (p^{2k})}$  is well defined and one-to-one.
\end{lemma}
\begin{proof}
 This follows from the fact that for $x,y\in A$, we have $[x]_{ann(p^{2k})}= [y]_{ann (p^{2k})} $ if and only if  
$x-y\in ann(p^{2k})$. This in turn is equivalent to $p^{2k}(x-y)=0$. So the map  $[a]_{ann (p^{2k})}\rightarrow [f(a)]_{ann (p^{2k})} $  is well defined and injective. Since  set $A/ann(p^{2k})$ is finite it follows that ths  is a bijection on $A/ann(p^{2k})$.
\end{proof}
  We recall a  Lemma from \cite{paper3}: 
 \begin{lemma}\label{g(a)}
 Let $A$ be a pseudobrace satisfying property $2$. Let notation be as in Lemma \ref{j}. For $a\in A$
 define \[g(a)=f^{(p^{n}!-1)}(a)\] where $f^{(1)}(a)=f(a)$ and for every $i$ we denote $f^{(i+1)}(a)=f(f^{(i)}(a))$. 
 Then \[[f(g(a))]=[g(f(a))]=[a].\]
  Moreover
  \[[f(a)]=[g^{(p^{n}!-1)}(a)]\]  where $g^{(1)}(a)=a$ and for every $i$ we denote $g^{(i+1)}(a)=g(g^{(i)}(a))$.
 \end{lemma}
 The following result was proved in \cite{paper1}.
\begin{proposition} \cite{paper1} Let $A$ be a brace of cardinality $p^{n}$ for some prime number $p$ and some natural number $n$.
 Suppose that $A$ satisfies property $1$. Then  $ann(p^{i})$, $p^{i}A$ are ideals in $A$ for every $i$ and that $(p^{i}A)*ann(p^{j})\subseteq ann(p^{j-i})$ for all $j\geq i$.
 Moreover, $A$ satisfies property $2$.
\end{proposition}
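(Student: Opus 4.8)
This statement is the conjunction of two assertions: (a) for every $i$ the sets $ann(p^i)$ and $p^iA$ are ideals of $A$, and $(p^iA)*ann(p^j)\subseteq ann(p^{j-i})$ whenever $j\geq i$; and (b) $A$ satisfies Property $2$. The plan is to obtain (a) directly from Proposition \ref{5} --- which is precisely that statement under the hypothesis of Property $1$ --- and then to derive (b) from (a) together with Theorem \ref{f(a)}. Since (a) already supplies the ideal clauses built into the definition of Property $2$, for (b) it only remains to verify the equivalence ``$p^{2k}a=p^{2k}b$ if and only if $p^{k}a^{\circ p^{k}}=p^{k}b^{\circ p^{k}}$''.

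To verify this equivalence I would fix any $f:A\rightarrow A$ with $p^{k}f(a)=a^{\circ p^{k}}$ for all $a$; the existence of such an $f$ (that is, $a^{\circ p^{k}}\in p^{k}A$) and the fact that the induced map $[a]_{ann(p^{2k})}\mapsto[f(a)]_{ann(p^{2k})}$ on $A/ann(p^{2k})$ is well defined and injective are exactly what Theorem \ref{f(a)} provides, its hypotheses being met because (a) holds and $A$ satisfies Property $1$. Multiplying $p^{k}f(a)=a^{\circ p^{k}}$ by $p^{k}$ gives $p^{2k}f(a)=p^{k}a^{\circ p^{k}}$, so $p^{k}a^{\circ p^{k}}=p^{k}b^{\circ p^{k}}$ is equivalent to $[f(a)]_{ann(p^{2k})}=[f(b)]_{ann(p^{2k})}$, while $p^{2k}a=p^{2k}b$ is equivalent to $[a]_{ann(p^{2k})}=[b]_{ann(p^{2k})}$. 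Well-definedness of the map yields the forward implication and injectivity the converse; that is Property $2$.

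The real work is thus inside Proposition \ref{5}, and that is where I expect the difficulty to lie. Two of the closures are immediate from left distributivity alone: $a*(p^ib)=p^i(a*b)\in p^iA$, and for $x\in ann(p^i)$ one has $p^i(a*x)=a*(p^ix)=a*0=0$, giving $A*p^iA\subseteq p^iA$ and $A*ann(p^i)\subseteq ann(p^i)$. The trouble is the opposite-side closures $p^iA*A\subseteq p^iA$ and $ann(p^i)*A\subseteq ann(p^i)$ and the mixed inclusion $(p^iA)*ann(p^j)\subseteq ann(p^{j-i})$: since $*$ is not additive in its first argument there is no direct reduction, and Property $1$ must be used. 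The mechanism (carried out in \cite{paper1}) is that Property $1$ is a left-Engel-type condition --- iterating left multiplication by a fixed element $\lfloor\frac{p-1}{4}\rfloor$ times lands in $pA$, and on $ann(p^i)$ it drops the annihilation index --- so that, via the binomial identity $a^{\circ p^{k}}=\sum_{j=1}^{p^{k}}\binom{p^{k}}{j}e_{j}'(a)$ of Lemma \ref{wazny} and the $p$-adic valuations of the coefficients, $a^{\circ p^{k}}$ is forced into $p^{k}A$; one then bootstraps along the left-nilpotency filtration $A=A^{1}\supseteq A^{2}\supseteq\cdots\supseteq A^{n+1}=0$ using $(a\circ b)*c=a*c+b*c+a*(b*c)$ to control the first-argument behaviour of $*$ layer by layer. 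Pushing these divisibility and annihilation constraints through every layer of the filtration is the technical heart of the argument; the rest is routine.
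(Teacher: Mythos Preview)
Your proposal is correct and follows exactly the route the paper takes: the paper's own argument is simply to invoke Proposition~\ref{5} (Lemma~10 of \cite{paper1}) for the ideal and containment assertions, and Theorem~\ref{f(a)} (Theorem~8 of \cite{paper1}) together with those to obtain Property~$2$; your bridging step from ``$[a]\mapsto[f(a)]$ is well defined and injective'' to the biconditional in Property~$2$ is the obvious one and matches the paper's usage (cf.\ the proof of Theorem~\ref{main} and Lemma~\ref{property2'}). Your closing paragraph on what Proposition~\ref{5} actually requires is accurate commentary but not part of the proof proper, since that result is imported wholesale from \cite{paper1}.
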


 We also  recall  the following result from \cite{paper3}: 
\begin{theorem}\label{71}
 Let $A$ be a pseudobrace which satisfies Property $1$ and Property $2$. Let $k$ be such that $p^{k(p-1)}A=0$.
 Let $\rho ^{-1}:pA\rightarrow A$ be any function such that $p\cdot \rho ^{-1}(x)=x$ for each $x\in A$.
 Let $a,b\in A$, then $[a]*[b]$ is obtained by applying
 operations $+$, $\odot $, 
 to elements $[a]$ and $[b]$ and $\rho ^{-1}$ to elements from $pA$, and the order of applying these operation depends only on the additive group of $A$ (it does not depend on $a$ and $b$ and it does not depend on any properties of $*$).  Moreover, the result
 does not depend of the choice of function $\rho ^{-1}$. 
\end{theorem}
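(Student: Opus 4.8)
The plan is to reconstruct $[a]*[b]$ from the operations $+$, $\odot$ and $\rho^{-1}$ by climbing through three successive layers of ``power-like'' elements, following the detailed argument of Section \ref{next}. First I would introduce, for $a,b\in A$, the iterated left-multiplication elements $e_i'(a,b)=a*(a*(\cdots*(a*b)))$ with $i$ copies of $a$, together with their normalised companions $e_i(a,b)$ obtained by dividing out the powers of $p$ hidden in the binomial coefficients $\binom{p^k}{j}$. Property $1$ is exactly what makes this work: it forces the relevant $e_j'(a,b)$ into $pA$ (and into $ann(p^{i-1})$ when the last variable lies in $ann(p^i)$), so that $e_0(a,b)=e_{(p-1)/2}(a,\rho^{-1}(b))$ and, inductively, all the $e_j(a,b)$ are well defined and depend only on $*$ and the additive group. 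The identity coupling the two structures is $a^{\circ p^k}*b=p^k\sum_{j=1}^{p^k}\sigma_j e_j(a,b)$ from Lemma \ref{wazny}.

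Second I would pass to the auxiliary operation $a\star b=\sum_j\sigma_j e_j(a,b)$, so that $p^k(a\star b)=a^{\circ p^k}*b$, and to the map $f$ with $p^kf(a)=a^{\circ p^k}$; iterating $\star$ gives elements $\tilde e_i'(a,b)$ and their normalised forms $\tilde e_i(a,b)$. The core bookkeeping step, carried out by induction along the left-nilpotency filtration, is to show that $\tilde e_i'(a,b)$ and $e_i'(a,b)$ differ by a higher-order term lying in $E_{i+1}^0(a,b)+E_i^1(a,b)$, with the precise recipe for that correction depending only on $|A|$ and not on $a,b$ or on $*$ (Lemmas \ref{prel7}, \ref{7useful}, \ref{new7}, \ref{super7}). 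Property $2$ is used through Lemmas \ref{j} and \ref{g(a)}: it makes $[a]\mapsto[f(a)]$ a bijection of $A/ann(p^{2k})$ with a combinatorially describable inverse $g$, so one can move freely between $a$ and $f(a)$ modulo $ann(p^{2k})$.

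Third, working in $A/ann(p^{2k})$ and using $[f(a)]\odot[b]=[a\star b]$ (immediate from $p^k(a\star b)=a^{\circ p^k}*b$ and the definition of $\odot$ in Theorem \ref{odot}), the $\odot$-iterates $\bar e_i'([a],[b])$ satisfy $\bar e_i'([f(a)],[b])=[\tilde e_i'(a,b)]$ and $\bar e_0$ is independent of the choice of $\rho^{-1}$. Combining Lemma \ref{6*}, which rewrites $a$ in terms of $f(a)$ modulo $ann(p^{2k})$ using the $\tilde E$-filtration, with the translation dictionary of Theorem \ref{clever}, one obtains $[a*b]\in[f(a)]\odot[b]+w([a],[b])$ with $w$ built only from $+$, $\odot$ and $\bar e_0$. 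Replacing $a$ by $g(a)$ and iterating $p^n!-1$ times (termination guaranteed by finiteness of $A/ann(p^{2k})$) eliminates $f$ altogether, yielding $[a]*[b]$ as a formula in $+$, $\odot$ and $\rho^{-1}$ applied only to elements of $pA$, whose shape depends solely on the additive group of $A$ --- this is Theorem \ref{nareszcie1}, from which the statement follows at once. Independence of the result from the choice of $\rho^{-1}$ propagates from the corresponding independence of each elementary step.

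\textbf{Main obstacle.} The delicate part is the translation dictionary itself: proving that replacing $*$ by $\star$, and then $\star$ by $\odot$, only introduces correction terms of strictly larger filtration degree, and that the recipe producing those corrections is genuinely independent of $a$, $b$ and of the multiplication. This is the inductive content of Lemmas \ref{prel7}--\ref{super7}, and the two fragile points are (i) tracking the two-parameter filtration $E_i^t$ through the nonlinear operation $e_0$, which secretly applies $\rho^{-1}$ and shifts the index by $(p-1)/2$, and (ii) ensuring $\rho^{-1}$ is only ever applied to elements already known to lie in $pA$, which keeps everything well defined and $\rho^{-1}$-independent.
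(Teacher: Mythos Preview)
Your proposal is correct and follows precisely the paper's own route: the three-layer passage $e_i \to \tilde e_i \to \bar e_i$ via the auxiliary operation $\star$ and the map $f$, the bookkeeping through the two-parameter filtrations $E_i^t$ and $\tilde E_i^t$ (Lemmas \ref{prel7}--\ref{super7}), the use of Property~2 through $g$ to invert $f$ (Lemma \ref{g(a)}), and the final assembly via Theorem \ref{clever}, Lemma \ref{7*}, Theorem \ref{mily} and Theorem \ref{nareszcie1}. You have also correctly isolated the genuinely delicate point, namely keeping $\rho^{-1}$ applied only to elements of $pA$ while tracking the filtration through the nonlinear step $e_0$.
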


\section{A modified construction of the group of flows}\label{groupflows}
 
In this section we discuss why  formulas obtained in the previous sections are related to the construction of the  group of flows from \cite{AG}.
 For a real number $q$, by $\lfloor q\rfloor $ we denote the largest integer not exceeding $q$.

\begin{definition}
 Let $A$ with operations $+, \cdot $ be a pre-Lie ring of cardinality $p^{n}$ for some prime number $p>3$ and some natural number $n$. For each $i$, denote $ann(p^{i})=\{a\in A: p^{i}a=0\}.$
 
We say that $A$ satisfies {\em Property $3$} if 
   for each $a_{1}, \ldots , a_{\lfloor{\frac {p-1}4}\rfloor }\in A$,
 \[{a_{1}}\cdot ({a_{2}}\cdot (\cdots {a_{\lfloor{\frac {p-1}4}\rfloor }}))\in pA,\]
 and 
 \[{a_{1}}\cdot ({a_{2}}\cdot (\cdots {a_{\lfloor {\frac {p-1}4\rfloor } }}\cdot (ann(p^{i}))))\in p\cdot ann(p^{i})\] for every $i\geq 1$.
\end{definition}

Denote \[L_{a}(b)=a\cdot b.\]
 Let $\rho ^{-1}:pA\rightarrow A$ be defined as any function such that $p\cdot \rho ^{-1}(x)=x$ for each $x\in pA$.  Note that this is the same as the definition of the  section \ref{pullback}. Note that the definition of $\rho ^{-1}$ depends only on the additive group $(A, +)$, so can be applied to a pre-Lie algebra instead of brace.

\begin{definition}\label{1} Let $p$ be a prime number and let $A$ be a pre-Lie ring satisfying Property $3$. 
 Let $j$ be a natural number, and let $s_{j}$ be the largest natural number such that $p^{s_{j}}$ divides $j!$. Observe that \[s_{j}\leq {\frac j{p}}+{\frac j{p^{2}}}+\cdots ={\frac j{p-1}}.\]
(as at  first we can count positive integers not exceeding $j$ which are divisible by $p$, then we count numbers divisible by $p^{2}$ and not exceeding $j$, etc.).
Denote \[\eta_{j}=\lfloor{\frac {(p-1)}4}\rfloor \cdot s_{j} .\]
 Note that \[L_{a}^{\eta_{j}}(b)\in p^{s_{j}}A\] and  $2{\eta }_{j}<j$.

For $s_{j}$ divisible by $p$ we
 denote \[E_{j,a}=L_{a}^{ j-\eta_{s_j}}(\rho ^{-1})^{s_{j}}L_{a}^{ \eta_{s_j}}.\] 
 If $j$ is not divisible by $p$ then $s_{j}=0$ and we denote  \[E_{j,a}=L_{a}^{ j}.\]

\end{definition}
\begin{remark}\label{2} Notice that  $E_{j, a}$ is well definied, and it does not depend on a choice of  function $\rho ^{-1}$. 
Indeed if we have two different functions $\rho _{1}^{-1}$ and $\rho _{2}^{-1}$ then 
 \[(\rho _{1}^{-1})^{s_{j}}L_{a}^{ e_{j}}-(\rho _{2} ^{-1})^{s_{j}}L_{a}^{ e_{j}}\in  ann (p^{s_{j}})\]
 for each $a,b\in A$.
By Property $3$, 
 \[L^{{s_{j}}\cdot {\lfloor {\frac {(p-1)}4}\rfloor}}(ann (p^{s_{j}}))=0,\]  and hence 
\[L^{j-\eta _{s_{j}}}(ann (p^{s_{j}}))=0,\]
 so $E_{j,a}$ is well defined and does not depend on the choice of the function $\rho ^{-1}$.

\end{remark}
\begin{remark}\label{czesto}
Notice also that  for every $a,b\in A$, $E_{j,a}(b)$ belongs to the ideal generated by $a$ in the pre-Lie ring $A$, since \[E_{j,a}(b)=a\cdot (L_{a}^{ j-\eta_{j}-1}((\rho ^{-1})^{s_{j}}(L_{a}^{ \eta_{j}}(b)))).\] 

\end{remark}
\begin{remark}\label{moving} Let notation be as in Remark \ref{2}.
Then  for all $i,j>0$ and all $a\in A, b\in p^{j}A$ we have
\[ L_{\eta _{j}+i,a}((\rho ^{-1})^{j}(b))=L_{\eta _{j}, a}((\rho ^{-1})^{j}L_{i, a}(b)).\]
 This follows from Property $3$ and since $L_{i,a}((\rho ^{-1})^{j}(b))-(\rho ^{-1})^{j}(L_{i,a}(b))\in ann(p^{j})$.

$ $

 To change the location of the function $\rho ^{-1}$  in the definition of $E_{j,a}$, a good approach is  to first move $\rho ^{-1}$ to the left, and leaving 
 $L_{{\lfloor {\frac {p-1}4}  \rfloor}, a}$
 between two occurences of $\rho ^{-1}$, and then each $\rho ^{-1}$ can be moved to the right.
\end{remark}

$ $

\subsection{Defining generalised group of flows }
 We will now generalise the construction of the group of flows from \cite{AG}. 
 The only difference to the original construction of group of flows from \cite{AG} is that the map $\rho ^{-1}:pA\rightarrow A$ will be used  instead of division by $p$. Notice, by Remark \ref{moving}
  we can move the function $\rho ^{-1}$ from one place to another in a given monomial, without changing the result, as long as there is  $L_{i, a}$  for $i\geq {\frac {p-1}4}$ before $\rho ^{-1}$ in this monomial. 
 
 In what follows we will modify the construction of the group of flows from \cite{AG}. This construction was later  mentioned in \cite{passage}, and we will use notation from \cite{passage}.  We start with a generalisation of function $W(a)$. 
Let $A$ be a pre-Lie ring satisfying property $3$.  We can add an identity $\upharpoonleft $ to this pre-Lie algebra, as in \cite{passage}. 
 
We denote $E_{1,a}(\upharpoonleft  )=a$ and $L_{1,a}(\upharpoonleft )=a$ and $L_{i+1,a}(\upharpoonleft )=a\cdot L_{i+1,a}(\upharpoonleft )$.
   We then denote, similarly as in Definition \ref{1},
  \[E_{j, a}(\upharpoonleft )=L_{a}^{ j-\eta_{s_j}}(\rho ^{-1})^{s_{j}}L_{a}^{ \eta_{s_j}}(\upharpoonleft ),\]  where $s_{j}$ and $\eta _{s_{j}}$ are as in Definition \ref{1}.
 Observe that by property $3$, $E_{j, a}(\upharpoonleft )\in A$, $E_{j, a}(b)\in A$, $E_{j,a}(ann(p^{i}))\subseteq ann(p^{i})$.  
 We are now ready to introduce function $W$. 

\begin{definition}\label{e} Let $p>3$ be a prime number, $n$ be a natural number and let $A$ be a pre-Lie ring of cardinality $p^{n}$.
 Let $A$ be a pre-Lie ring satisfying property $3$. Let $a,b\in A$.  
 Let $\gamma _{j}$ be integers not divisible by $p$ and such that 
 \[j!=\gamma _{j}\cdot p^{s_{j}}.\]
 Let $\sigma _{j}$ be an integer such that $\sigma_{j} \cdot \gamma_{j}-1$ is divisible by $p^{n}$. 
We define \[W(a)=\sum _{i=1}^{\infty }\sigma_{i}E_{j,a}(\upharpoonleft  ).\]
We also define \[W(a, b)=\sum _{i=1}^{\infty }\sigma_{i}E_{j,a}(b ).\]
 \end{definition}
 
Observe that \[W(\Omega (a), \upharpoonleft)=W(\Omega (a))=a.\]
 We are now ready to prove the main result of this section.

\begin{theorem}\label{23}
 Let $A$ be a pre-Lie ring satisfying property $3$.  Let $W$ 
 be defined as above, then the map $W:A\rightarrow A$ given by $a\rightarrow W(a)$ is injective. Therefore, there is a map $\Omega :A\rightarrow A$ such that 
$W(\Omega (a))=\Omega(W(a))=a$ for all $a\in A$. Also $\Omega =W^{{p^{n+1}!}-1}$. 
\end{theorem}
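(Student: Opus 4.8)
\emph{Proof plan.} The strategy is to put $W$ into a ``unitriangular'' normal form with respect to the left nilpotent filtration $A=A^{1}\supseteq A^{2}\supseteq\cdots$ of the pre-Lie ring $A$, deduce that $W$ is injective, and then obtain $\Omega$ together with the formula $\Omega=W^{p^{n+1}!-1}$ from an elementary permutation argument. I would begin with three preliminary observations. Property~$3$ forces $A$ to be left nilpotent: its first clause gives $A^{\lfloor(p-1)/4\rfloor}\subseteq pA$, and iterating this together with $A\cdot(p^{t}A^{s})\subseteq p^{t}A^{s+1}$ yields $A^{N}=0$ for some $N$ depending only on $p$ and $n$. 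Hence the series defining $W(a)$ is a finite sum, because for large $j$ the outer block $L_{a}^{\,j-\eta_{j}}$ of $E_{j,a}(\upharpoonleft)$ has $j-\eta_{j}\geq 3j/4$ (since $\eta_{j}\leq j/4$) and is applied to an element of $A$, so $E_{j,a}(\upharpoonleft)\in A^{j-\eta_{j}+1}=0$. Finally, since $1!=1$ we have $s_{1}=0$, $\gamma_{1}=1$, hence $\sigma_{1}\equiv 1\pmod{p^{n}}$; as $p^{n}A=0$ by Lagrange's theorem applied to $(A,+)$, the $j=1$ term of $W(a)$ is exactly $a$, while each term with $j\geq 2$ lies in the ideal generated by $a$ (Remark~\ref{czesto}) and, since $j-\eta_{j}\geq 2$, lies in $A^{2}$. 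This gives the normal form
\[W(a)=a+w(a),\qquad w(a)\in A^{2}\cap(\text{ideal generated by }a).\]

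The heart of the proof is injectivity. I would argue that $W(a)=W(b)$ forces $a-b\in A^{i}$ for every $i$, and therefore $a=b$ by left nilpotence. From the normal form, $W(a)=W(b)$ gives $a-b=w(b)-w(a)\in A^{2}$, so assume inductively that $a-b\in A^{i}$. For each $j\geq 2$ I would first apply Remark~\ref{moving} to rewrite $E_{j,a}(\upharpoonleft)$ as $L_{a}^{\eta_{j}}\big((\rho^{-1})^{s_{j}}(L_{a}^{\,j-\eta_{j}}(\upharpoonleft))\big)$, where $j-\eta_{j}>\eta_{j}$ by $2\eta_{j}<j$, so that $L_{a}^{\,j-\eta_{j}}(\upharpoonleft)\in p^{s_{j}}A$ by Property~$3$; then I would telescope $E_{j,a}(\upharpoonleft)-E_{j,b}(\upharpoonleft)$ by the bi-additivity of the pre-Lie product, replacing $L_{a}$ by $L_{b}$ one factor at a time. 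For the parts of the expansion in which the $\rho^{-1}$-block plays no role, the result is a sum of monomials each containing a single factor $L_{a-b}$ with $a-b\in A^{i}$, together with at least one further factor; by the standard multiplicativity $A^{i}\cdot A^{\ell}\subseteq A^{i+\ell}$ of the left nilpotent series, each such monomial lies in $A^{i+1}$. Summing over $j$ then gives $w(a)-w(b)\in A^{i+1}$, so $a-b\in A^{i+1}$.

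The step I expect to be the main obstacle is controlling the $\rho^{-1}$-block during this telescoping: $\rho^{-1}$ is neither additive nor filtration-preserving, so a naive degree count fails, and the differences $(\rho^{-1})^{s_{j}}(L_{a}^{\,j-\eta_{j}}(\upharpoonleft))-(\rho^{-1})^{s_{j}}(L_{b}^{\,j-\eta_{j}}(\upharpoonleft))$ must be analysed carefully. The tools are Remarks~\ref{2} and~\ref{moving} (which let one reposition each $\rho^{-1}$ outside the expression, past a block $L_{a}^{\eta_{j}}$ of length $\lfloor(p-1)/4\rfloor\cdot s_{j}$, and show that the result is independent of the choice of $\rho^{-1}$) together with the second clause of Property~$3$: iterating $L_{a}^{\lfloor(p-1)/4\rfloor}(ann(p^{t}))\subseteq p\cdot ann(p^{t})$ gives $L_{a}^{\lfloor(p-1)/4\rfloor\cdot s_{j}}(ann(p^{s_{j}}))=0$, so that every correction term produced by the non-additivity of $\rho^{-1}$ — which lives in $ann(p)$, and in $ann(p^{s_{j}})$ after the $s_{j}$-fold iteration — is annihilated by the surrounding left-multiplications $L_{a}^{\eta_{j}}$ and contributes nothing modulo $A^{i+1}$. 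Making this bookkeeping precise, along a filtration that simultaneously records nilpotent degree and $p$-valuation, is the only delicate point, and is exactly where Property~$3$ (both clauses) is genuinely used.

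Finally, $W:A\to A$ is injective and $A$ is finite, so $W$ is a bijection; putting $\Omega:=W^{-1}$ gives $W(\Omega(a))=\Omega(W(a))=a$ for all $a\in A$, and $W(\Omega(a),\upharpoonleft)=a$ follows since $W(c,\upharpoonleft)=W(c)$. Being a permutation of a set of cardinality $p^{n}$, $W$ has order dividing $(p^{n})!$, hence dividing $(p^{n+1})!$, so $W^{(p^{n+1})!}=\mathrm{id}$ and therefore $\Omega=W^{-1}=W^{p^{n+1}!-1}$.
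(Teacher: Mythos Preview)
Your approach has a genuine gap that is more fundamental than the $\rho^{-1}$ bookkeeping you flag as the main obstacle. You invoke ``the standard multiplicativity $A^{i}\cdot A^{\ell}\subseteq A^{i+\ell}$ of the left nilpotent series,'' but for pre-Lie rings this is \emph{false}. The filtration $A^{i+1}=A\cdot A^{i}$ satisfies $A\cdot A^{i}\subseteq A^{i+1}$ by definition, but not $A^{i}\cdot A\subseteq A^{i+1}$: take the three-generator pre-Lie ring with $e_{1}\cdot e_{1}=e_{2}$, $e_{2}\cdot e_{1}=e_{3}$, all other products zero (the pre-Lie identity is easily checked). Then $A^{2}=\langle e_{2},e_{3}\rangle$ and $A^{3}=A\cdot A^{2}=0$, yet $A^{2}\cdot A$ contains $e_{2}\cdot e_{1}=e_{3}\neq 0$. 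Your telescoping of $E_{j,a}(\upharpoonleft)-E_{j,b}(\upharpoonleft)$ produces terms such as $(a-b)\cdot b$ and $(a-b)\cdot(b\cdot b)$; with $a-b\in A^{i}$ these lie in $A^{i}\cdot A^{\ell}$, which you cannot place in $A^{i+1}$. So already for $j=2$, where no $\rho^{-1}$ is present, the inductive step breaks down.

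The paper avoids this by working along the $p$-adic filtration $p^{i}A$ rather than the nilpotent one. It first observes that if $p^{3}A=0$ then Property~3 forces $A^{p-1}=0$, so the ordinary group-of-flows construction applies and $W$ is injective by the results in \cite{passage}. For general $A$ one supposes $W(a)=W(b)$, passes to $A/p^{3}A$ to get $a-b\in p^{3}A$, takes $j$ maximal with $b-a=p^{j+3}c$, and uses distributivity of the pre-Lie product together with Remark~\ref{moving} to compare $W(a+p^{j+3}c)$ with $p^{j+2}W(a+pc)$ modulo $p^{j+4}A$. Feeding $W(a)=W(b)$ into this comparison and using injectivity of $W$ on a quotient with $p^{3}=0$ forces $pc\in p^{2}A+ann(p^{j+2})$, whence $a-b\in p^{j+4}A$, contradicting the maximality of $j$. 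The key point is that the $p$-adic filtration \emph{is} compatible with the pre-Lie product (since $(p^{s}x)\cdot y=p^{s}(x\cdot y)$), whereas the left-nilpotent filtration is not.
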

\begin{proof} The statement about  $\Omega $
 follows from the general fact about bijective functions on a finite set.  See for example Lemma \ref{g(a)}.
 
$ $

So it suffices to show that $W:A\rightarrow A$ is an injective function.
 Suppose on the contary that $W:A\rightarrow A$ is not injective. Without restricting generality we can assume that $A$ has the smallest possible cardinality such that $W:A\rightarrow A$ is not injective (and $A$ satisfies property $3$).

 Suppose that $p^{3}A=0$. 
  By property $3$, $A^{p-1}\subseteq p^{3}A=0$, so the Lazard's correspondence holds for the Lie ring associated to the pre-Lie ring $A$. Therefore, the function $W$ on pre-Lie ring $A$ is injective
 by Section $4$ \cite{passage}.
 Therefore we can assume that $p^{3}A\neq 0$. 

$ $

  Suppose that $W(a)=W(b)$ for some $a,b\in A$. Hence, $W([a]_{p^{3}A})=W([b])_{p^{3}A}$, and so $[a]_{p^{3}A}=[b]_{p^{3}A}$ in $A/p^{3}A$. Therefore, $a-b\in p^{3}A$. Let $j\geq 0$ be the maximal natural number such that $a-b\in p^{j+3}A$.
Let $c\in A$ be such that \[b-a=p^{j+3}c.\] 
Recall that $W(a)=W(b)$. Observe that  
\[W(a+p^{j+3}c)+(p^{j+2}-1)W(a)=p^{j+2}W(a+pc)+p^{j+4}e,\]
for some $e\in A$. 
Indeed terms in this expression which have at least two occurences of $c$ will have power 
$p^{j+4}$ near them, and 
terms which have only one $c$ would agree, as it would not be affected by $\rho ^{-1}$ reasoning as in  Remark \ref{moving} (since $p^{i-1}a-\rho ^{-1}(p^{i}a)\in ann(p)$ for $i>0$). 
 Moreover, terms without any $c$ will also agree.

Recall that $W(a)=W(b)=W(a+p^{j+3}c)$, it follows that
 \[p^{j+2}(W(a+pc)-W(a))= W(a+p^{j+3}c)-W(a)-p^{j+4}e=W(b)-W(a)- p^{j+4}e=-p^{j+4}e.\]
 Therefore 
$p^{j+2}(W(a+pc)-W(a)+p^{2}e)=0$.
 Therefore 
 \[W(a+pc)-W(a)\in p^{2}A+ann(p^{j+2}).\]
 Denote $I=p^{2}A+ann(p^{j+2})$, then 

\[W([a+pc]_{I})=W([a]_{I})\] in $A'=A/I$ (this follows from Property $3$ and since $W(a+pc)-W(a)\in I$ and from the fact that $ann(p^{j+2})\cap p^{i}A=p^{i}ann(p^{j+2+i})$ for each $i,j$).  
 Recall that $W:A'\rightarrow A'$ is injective (as at the beginning of this proof since $p^{3}A'=0$) so $[a+pc]_{I}=[a]_{I}$, hence $pc\in I$.
 Therefore, \[b-a=p^{j+3}c=p^{j+2}\cdot pc\in p^{j+2}(p^{2}A+ ann(p^{j+2}))\subseteq p^{j+4}A,\] 
 contradicting the maximality of $j$.
We have obtained a contradication. 
\end{proof}

\begin{definition}\label{gf} {\em The  modified construction of the group of flows.} Let $(A, +,\cdot )$ be a pre-Lie ring of cardinality $p^{n}$ for some prime number $p$ and some natural number $n$. Suppose that 
 $A$ satisfies property $3$ and let $\Omega (a)$ and $W(a)$, $W(a,b)$  be defined as in Definition \ref{e}.  
We say that $(A, +, \circ ')$ is obtained by the modified group of flows construction  from a pre-Lie ring $(A, +, \cdot  )$ if the operation $\circ ' $ in $A$ is defined as follows: 
\[a\circ ' b=a+b+W(\Omega (a), b).\]
 for $a,b\in A$.
 Moreover, the addition in $(A, +, \circ ')$ is the same as the addition in the pre-Lie ring $A$.  Denote $a\circ ' b=a+b+a*'b$, then 
\[a*'b=W(\Omega (a), b).\]
\end{definition}

We will now describe some properties of the modified construction of  group of flows.

Denote \[a^{\circ ' i}=a\circ '(a\circ '(\cdots a\circ ' a)).\]

\section{Generalised construction of group of flows and pseudobraces}

\begin{theorem}\label{w}
Let $(A, +, \cdot )$ be a pre-Lie ring which satisfies property $3$. Let $(A, +, \circ ')$ be  obtained from the pre-Lie ring  $(A, +, \cdot)$ by the generalised construction of the group of flows, as in Definition \ref{gf}. 
 Then $(A, +, \circ ')$ is a pseudobrace. 
\end{theorem}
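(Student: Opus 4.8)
The plan is to verify, in turn, the five conditions of Definition~\ref{pseudobrace} for $(A,+,\circ')$, where $a\circ' b=a+b+a*'b$ and $a*'b=W(\Omega(a),b)$. Condition~$(1)$ is the hypothesis, since the modified construction leaves the addition untouched. Condition~$(5)$, that is $a*'(b+c)=a*'b+a*'c$, and the fact that $0$ is a two-sided identity for $\circ'$, are direct: each map $b\mapsto E_{j,c}(b)$ is additive, because $L_{c}$ is additive ($\cdot$ being bi-additive) and in $E_{j,c}$ every occurrence of $(\rho^{-1})^{s_{j}}$ acts on an element of $p^{s_{j}}A$ and is followed by $L_{c}^{\,j-\eta_{s_{j}}}$ with enough left multiplications for Property~$3$ to annihilate the error stemming from the near-additivity of $\rho^{-1}$ -- this is the computation of Remarks~\ref{2} and~\ref{moving}, which simultaneously shows that $\circ'$ is independent of the choice of $\rho^{-1}$; moreover $\Omega(0)=0$ since $W(0)=0$, and $E_{j,0}=0$, $E_{j,c}(0)=0$, so $0\circ' b=b$ and $a\circ' 0=a$.

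The real content is conditions~$(2)$ (every $a$ has a $\circ'$-inverse), $(3)$ (the power laws $a^{\circ' i}\circ' a^{\circ' j}=a^{\circ'(i+j)}$ and $a^{\circ' i}\circ' b=a\circ'(a\circ'(\cdots\circ' b))$) and $(4)$ ($A^{n+1}=0$), and I would obtain all three by a d\'evissage in the cardinality of $A$, mirroring the proof of Theorem~\ref{23}. If $p^{3}A=0$ then, iterating Property~$3$, $a_{1}\cdot(a_{2}\cdot(\cdots a_{m}))\in p^{3}A=0$ for $m=3\lfloor\tfrac{p-1}{4}\rfloor<p$, so $(A,+,\cdot)$ is a nilpotent pre-Lie ring of class less than $p$; this is the Lazard range, where the modified group of flows is known to be a (pseudo)brace -- this is precisely the case analysed in Section~$4$ of \cite{passage} and in \cite{paper1} -- so $(2)$, $(3)$, $(4)$ hold. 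If $p^{3}A\neq 0$, then $p^{3}A$ is an ideal of $(A,+,\cdot)$ and, choosing $\rho^{-1}$ on $pA$ so that it lifts a fixed pullback on $p(A/p^{3}A)$, the quotient map $A\to A/p^{3}A$ intertwines the operations $\circ'$ (here the $\rho^{-1}$-independence is used); hence $(A/p^{3}A,+,\circ')$ is the modified group of flows of the smaller pre-Lie ring $A/p^{3}A$ and, by induction on cardinality, a pseudobrace. Each identity in $(2)$, $(3)$, $(4)$ then lifts to $A$ by the cancellation argument from the proof of Theorem~\ref{23}: writing the difference $\delta$ of its two sides, one has $\delta\in p^{3}A$; taking $j$ maximal with $\delta\in p^{j+3}A$ and perturbing the identity by $p^{j+3}c$, the terms in which the perturbation occurs at least twice carry an extra factor $p$ while the terms linear in the perturbation are unaffected by $\rho^{-1}$ (since $p^{i-1}a-\rho^{-1}(p^{i}a)\in ann(p)$ and, by Remark~\ref{moving}, there are always enough left multiplications after each $\rho^{-1}$), forcing $\delta\in p^{j+4}A$ -- a contradiction unless $\delta=0$.

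Condition~$(2)$, and the crude form of $(4)$, can alternatively be seen directly: Property~$3$ makes $(A,+,\cdot)$ left nilpotent, since $L_{a_{1}}\cdots L_{a_{\lfloor(p-1)/4\rfloor}}(A)\subseteq pA$ gives, by iteration, the $\mathbb Z$-linearity of the $L$'s, and $p^{n}A=0$, that a bounded number of left multiplications kills $A$; and by Remark~\ref{czesto} the map $\delta\mapsto\delta+W(\Omega(a),\delta)$ maps $(A,+)$ to itself with ``higher-order'' correction $W(\Omega(a),\delta)\in L_{\Omega(a)}(A)$, hence is injective, hence bijective, which produces the two-sided $\circ'$-inverse; the same left nilpotency bounds the $*'$-central series, and the sharp bound $A^{n+1}=0$ follows because a strictly descending chain of subgroups of the $p$-group $(A,+)$ has at most $n$ proper steps.

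I expect condition~$(3)$ to be the main obstacle: it says that $\circ'$, built with the non-canonical map $\rho^{-1}$ instead of division by $p$, still enjoys the one-parameter-subgroup property of the Agrachev--Gamkrelidze flow, and there seems to be no way to verify, say, $a^{\circ' i}\circ' a^{\circ' j}=a^{\circ'(i+j)}$ by direct manipulation of $\rho^{-1}$ -- one really needs both the $\rho^{-1}$-independence of Remark~\ref{2} and the filtration/cancellation device of Theorem~\ref{23}, which is why the plan routes condition~$(3)$ through the Lazard-range case $p^{3}A=0$ and then lifts.
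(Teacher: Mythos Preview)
Your handling of conditions $(1)$, $(5)$, and your alternative direct arguments for $(2)$ and $(4)$ are fine and match the paper's in spirit. The divergence is entirely at condition $(3)$, and here the paper does exactly what your last paragraph dismisses as impossible: it verifies the one-parameter-subgroup law by direct manipulation.

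The paper's argument is to show $W(i\cdot a)=W(a)^{\circ' i}$ and $W(i\cdot a)\circ' b=W(a)\circ'(W(a)\circ'\cdots\circ'(W(a)\circ' b))$ by induction on $i$, observing that every monomial arising in either side is an iterated $L_{a}$--$\rho^{-1}$ word in the single letter $a$ applied to $b$ or to $\upharpoonleft$. Remark~\ref{moving} lets each $\rho^{-1}$ slide past blocks of $L_{a}$ to a canonical position, after which the identity is the purely formal one $e^{ia}\cdot b=(e^{a})^{i}\cdot b$. Since $W$ is a bijection by Theorem~\ref{23}, one then substitutes $\Omega(a)$ for $a$ to get $(3)$ for an arbitrary element, and $W(a)\circ' W(-a)=0$ gives $(2)$ at the same time.

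Your proposed lifting, by contrast, does not close. In Theorem~\ref{23} the perturbation $b=a+p^{j+3}c$ is \emph{handed to you} by the hypothesis $W(a)=W(b)$, and the trick compares two evaluations of the same map. For an identity such as $a^{\circ' i}\circ' a^{\circ' j}=a^{\circ'(i+j)}$ there is only one input $a$; writing $\delta=\delta(a)$ for the difference, induction on cardinality (via the quotients $A/p^{m}A$) gives $\delta\in p^{m}A$ only for those $m$ with $p^{m}A\neq 0$. At the last step, where $p^{m-1}A\neq 0$ and $p^{m}A=0$, you learn $\delta\in p^{m-1}A$ and then have nothing further to quotient by. The ``perturbation by $p^{j+3}c$'' you invoke has no second evaluation to compare against, so the cancellation device of Theorem~\ref{23} does not transfer.

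So the misjudgement is the claim that $(3)$ resists direct verification: Remark~\ref{moving} is precisely the tool that makes the exponential computation go through with $\rho^{-1}$ in place of honest division, and the paper uses it.
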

\begin{proof}
Let $a,b\in A$, observe first that for every natural number $i$, we have 
\[W(i\cdot a)=W(a)^{\circ' i}.\]
 Moreover, \[W(i\cdot a)\circ ' b=W(a)\circ ' (W(a)\circ ' \cdots \circ ' (W(a)\circ ' b))\] where $W(a)$ appears $i$ times on the right hand side of this equation. 
 It can be proved by induction on $i$, in the same way as in the case of nomal group of flows, by using the fact that $\rho ^{-1}$ is similar to the division by $p$, and by using the Remark \ref{moving}. It is very similar to the proof that $e^{ia}\cdot b=(e^{a})^{i}\cdot b$ for real numbers $a,b$.
 
Observe that this implies
\[(W(a)^{\circ ' i}\circ ' W(a)^{\circ ' j})\circ b=W(a)^{\circ 'i+j}\circ ' b=W(a)\circ '(\cdots \circ '(W(a)\circ ' b)),\]
where $a$ appears $i$ times on the rght hand side for any $i$.

By Theorem \ref{23} function $W:A\rightarrow A$ is injective, and $W(\Omega (a)=a$ and $W(\Omega (b))=b$. Therefore we can apply the above to $\Omega (a)$ instead of $a$,  and $\Omega (b)$ instead of $b$ and obtain
 $(a^{\circ  i}\circ a^{\circ j})\circ b=a^{ \circ i+j}\circ b$ 
 and $a^{\circ i}\circ a^{\circ j}=a^{\circ i+j}$, $a^{circ i}\circ b=a\circ (a\circ \cdots a\circ b))$ where $a$ appears $i$ times on the right-hand side of this equation. 
 Observe also that $W(a)\circ W(-a)=0$ (where $0$ is the identity of the additive group of $A$).
This shows that $A$ satisfies some axioms of the pseudobrace from Definition \ref{pseudobrace}.

 We will now show that $A^{n+1}=0$ where $A^{1}=A$, and for $i\geq 1$ we define $A^{i+1}=A*'A^{i}$ where $a*'b=a\circ b-a-b$.

For the pre-Lie ring $A$ denote 
 $A(1)=A$ and  $A(i+1)=A\cdot A(i)$. Observe that by property $1'$ we have $A(m)\subseteq p^{n}A=0$ for some $A$ (where $p^{n}A=0$ by the Lagrange theorem for the additive group $(A,+)$).  
 
 Indeed, observe that  $A(i+1)\subseteq A(i)$ for each $i$, also  if $A(i)=A(i+1)$ for some $i$ then $A(i+1)=A(i+2)$ and so $A(i)=\cdots =A(m)=0$.  So it follows that $A^{n+1}=0$.

Note that by  $A*' A(i)\subseteq A^{i+1}$ by  Remark \ref{czesto} and by Remark \ref{moving}, therefore, $A^{n+1}\subseteq A(n+1)=0$.  
 Consequently $(A, +, \circ ')$ is a pseudobrace. 
\end{proof}

\section{Main result}
\subsection{Supporting lemmas}

 We start with a lemma.
\begin{lemma}\label{sp3}
Assume that  $A$ is a brace satisfying Property $1'$ and $1''$. Then  the pre-Lie ring  $(A/ann(p^{2k}), +,\bullet )$ constructed in Theorem \ref{preLie} 
 satisfies property $3$.
 Therefore the generalised group of flows  construction 
 from Definition \ref{gf} can be defined on this pre-Lie ring.
\end{lemma}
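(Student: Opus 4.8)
The plan is to verify Property $3$ for the pre-Lie ring $(A/ann(p^{2k}),+,\bullet)$ directly, reading it off the explicit formula for $\bullet$ in Theorem \ref{preLie} together with the hypotheses $1'$ and $1''$ on $A$. Since $A$ satisfies $1'$ and $1''$ it satisfies Property $1$, so by the Proposition recalled above the sets $ann(p^{i})$, $p^{i}A$ are ideals in $A$ with $(p^{i}A)*ann(p^{j})\subseteq ann(p^{j-i})$; hence Theorem \ref{preLie} indeed produces a left nilpotent pre-Lie ring $(A/ann(p^{2k}),+,\bullet)$ with
\[[a]\bullet[b]=-(1+p+\cdots+p^{n})\sum_{i=0}^{p-2}\xi^{p-1-i}\bigl((\xi^{i}p^{k}a)*b\bigr).\]
The structural observation driving everything is that, choosing representatives $a,b\in A$, the right-hand side lies in $A$ and is an integer linear combination of elements $(p^{k}x)*b$ with $x\in A$ (because $\xi^{i}p^{k}a=p^{k}(\xi^{i}a)$), and that the brace operation $*$ is $\mathbb{Z}$-linear in its second variable.

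Writing $c=\lfloor \frac{p-1}{4}\rfloor$, I would then iterate this observation. For $a_{1},\dots,a_{c},b\in A$, repeatedly applying the formula for $\bullet$ and the $\mathbb{Z}$-linearity of $*$ in the second slot shows that $[a_{1}]\bullet([a_{2}]\bullet(\cdots\bullet([a_{c-1}]\bullet[a_{c}])))$ is represented by an integer linear combination of elements
\[(p^{k}x_{1})*\bigl((p^{k}x_{2})*(\cdots*((p^{k}x_{c-1})*a_{c}))\bigr),\qquad x_{1},\dots,x_{c-1}\in A.\]
Each such element is built from $c-1$ applications of $*$ with all operands in $A$, hence lies in $A^{c}$; by Property $1'$, $A^{c}\subseteq pA$, so the whole combination lies in $pA$ and its class lies in $p\,(A/ann(p^{2k}))$. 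This is the first clause of Property $3$. For the second clause, fix $i\ge 1$ and a class $[r]$ in $A/ann(p^{2k})$ killed by $p^{i}$; then $p^{i+2k}r=0$, i.e. $r\in ann(p^{i+2k})$ in $A$. The same iteration, now with $c$ applications of $\bullet$, shows that $[a_{1}]\bullet(\cdots\bullet([a_{c}]\bullet[r]))$ is represented by an integer linear combination of elements $(p^{k}x_{1})*(\cdots*((p^{k}x_{c})*r))$, each of which lies in $A*(A*(\cdots*(A*ann(p^{i+2k}))))$ with $c$ copies of $A$; by Property $1''$ applied with exponent $i+2k$ this set is contained in $p\cdot ann(p^{i+2k})$. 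Hence the combination equals $pw$ with $w\in ann(p^{i+2k})$, and its class is $p[w]$ with $p^{i}w\in ann(p^{2k})$, i.e. $[w]$ is killed by $p^{i}$ in $A/ann(p^{2k})$. This establishes Property $3$, and then Definition \ref{gf} applies to the pre-Lie ring, proving the last sentence of the statement.

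The only things needing care — and they are purely bookkeeping — are counting the number of $*$-factors produced by $c-1$ (respectively $c$) applications of $\bullet$, so that the products land in $A^{c}$ (respectively in the $c$-fold iterated bracket ending in $ann$), and translating between the annihilator computed in the quotient $A/ann(p^{2k})$ and the annihilator $ann(p^{i+2k})$ in $A$. I do not anticipate any genuine obstacle: the argument is a direct unwinding of the defining formula for $\bullet$ against hypotheses $1'$ and $1''$, using only linearity of $*$ in the second argument and that $p^{k}x\in A$ whenever $x\in A$.
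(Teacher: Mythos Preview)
Your argument is correct and self-contained. The paper's own proof consists of a single sentence citing Proposition~20 of \cite{paper1}, so your direct verification is a genuinely different (and more explicit) route: rather than invoking an external result, you unpack the defining formula for $\bullet$ and read Property~$3$ straight off Properties~$1'$ and~$1''$ via the $\mathbb{Z}$-linearity of $*$ in its second argument. The counting of $*$-factors is right (a $c$-fold $\bullet$-product unwinds to integer combinations of $c$-fold $*$-products, landing in $A^{c}$ for the first clause and in the iterated $A*(\cdots*(A*ann(p^{i+2k})))$ with $c$ copies of $A$ for the second), and your translation between the annihilator in the quotient and $ann(p^{i+2k})$ in $A$ is accurate. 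What your approach buys is transparency: one sees exactly how $1'$ feeds the first clause of Property~$3$ and $1''$ the second, without needing access to \cite{paper1}. The paper's citation is of course shorter and avoids repeating material already established elsewhere.
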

\begin{proof}
 This follows from Proposition $20$ in \cite{paper1}.
\end{proof}

\begin{lemma}\label{ideal}
 Let notation be as in Lemma \ref{sp3}. 
Let $(C, +, \circ )$ be a brace satisfying Properties $1'$ and $1''$, and let $C_{1}=(C/ann(p^{2k}), +, \circ )$ be its factor brace and $C_{2}=(C/ann(p^{2k}),+, \circ ' )$ be the pseudobrace obtained by the generalised group of flows construction  from the pre-Lie ring 
$(C/ann(p^{2k}), +,\bullet )$ as described in  Definition \ref{gf}. Denote $A=C/ann(p^{2k})$ as an abelian group. 

 Then $Ann(p^{j})=\{r\in A: p^{j}r=0\}$   is an ideal in the brace $C_{1}$
 (by \cite{paper1}) and in  the 
 pseudobrace $C_{2}$ for $j=1,2, \ldots $. 
\end{lemma}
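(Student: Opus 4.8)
The plan is to prove the two claims separately: that $Ann(p^{j})$ is an ideal of the brace $C_{1}$ (the ``by \cite{paper1}'' part), and that it is an ideal of the pseudobrace $C_{2}$, which is the substantive half and rests on the structure of the modified group of flows together with Property~$3$.

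For $C_{1}$, let $\pi\colon C\to C/ann(p^{2k})=A$ be the quotient map. One has $[c]\in Ann(p^{j})$ iff $p^{j}c\in ann(p^{2k})$ iff $c\in ann(p^{2k+j})$, so $Ann(p^{j})=\pi(ann(p^{2k+j}))$. Since $C$ satisfies Properties $1'$ and $1''$ it satisfies Property~$1$, so by \cite{paper1} each $ann(p^{i})$ is an ideal of $C$; as $ann(p^{2k})\subseteq ann(p^{2k+j})$, its image $Ann(p^{j})$ is an ideal of $C_{1}$. Note that $Ann(p^{j})$ is a subgroup of $(A,+)$ closed under multiplication by $p^{k}$ and by integers — all that is used below — and that, by Lemma~\ref{sp3}, the pre-Lie ring $(A,+,\bullet)$ of Theorem~\ref{preLie} satisfies Property~$3$.

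Next I would record that $Ann(p^{j})$ is an ideal of the pre-Lie ring $(A,+,\bullet)$: from $[a]\bullet[b]=-(1+p+\cdots+p^{n})\sum_{i=0}^{p-2}\xi^{p-1-i}((\xi^{i}p^{k}a)*b)$, if $b\in Ann(p^{j})$ then each summand $(\xi^{i}p^{k}a)*b$ lies in $Ann(p^{j})$ because $Ann(p^{j})$ is an ideal of $C_{1}$, so $a\bullet b\in Ann(p^{j})$; and $\xi^{i}p^{k}b\in Ann(p^{j})$, so $(\xi^{i}p^{k}b)*a\in Ann(p^{j})$ and $b\bullet a\in Ann(p^{j})$. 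Now I turn to $C_{2}$, where $a*'b=W(\Omega(a),b)=\sum_{\ell\geq1}\sigma_{\ell}E_{\ell,\Omega(a)}(b)$. By Remark~\ref{czesto}, $W(c)$ and every $W(c,b)$ lie in the pre-Lie ideal generated by $c$, so $W$, hence $\Omega=W^{p^{n+1}!-1}$, maps $Ann(p^{j})$ into itself; therefore for $i\in Ann(p^{j})$ we get $\Omega(i)\in Ann(p^{j})$, each $E_{\ell,\Omega(i)}(a)$ lies in the pre-Lie ideal generated by $\Omega(i)\subseteq Ann(p^{j})$, and $i*'a\in Ann(p^{j})$. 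For $a*'b$ with $b\in Ann(p^{j})$ I would use $E_{\ell,c}(b)=L_{c}^{\,\ell-\eta_{\ell}}(\rho^{-1})^{s_{\ell}}L_{c}^{\,\eta_{\ell}}(b)$ from Definition~\ref{1} (with $\eta_{\ell}=\lfloor\tfrac{p-1}{4}\rfloor s_{\ell}$; the case $s_{\ell}=0$ is immediate): by Property~$3$, $L_{c}^{\,\eta_{\ell}}(b)\in p^{\,s_{\ell}}Ann(p^{j})$; applying $(\rho^{-1})^{s_{\ell}}$ lands in $Ann(p^{\max(j,s_{\ell})})$ (dividing by $p$ raises the annihilator exponent by at most $s_{\ell}$); and the remaining $\ell-\eta_{\ell}$ left multiplications by $c$ bring it back into $Ann(p^{j})$, since by Property~$3$ each block of $\lfloor\tfrac{p-1}{4}\rfloor$ of them divides the value by $p$ and so lowers the exponent by one, whence $s_{\ell}$ such blocks suffice — they cost $\eta_{\ell}$ multiplications, and $\ell-\eta_{\ell}>\eta_{\ell}$ because $2\eta_{\ell}<\ell$. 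Hence $a*'b\in Ann(p^{j})$.

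It remains to verify $(a+i)*'(b+i')-a*'b\in Ann(p^{j})$ for $i,i'\in Ann(p^{j})$. By distributivity of $*'$ in the second argument (axiom~(5) of Definition~\ref{pseudobrace}) this equals $\big((a+i)*'b-a*'b\big)+(a+i)*'i'$, and the last term is in $Ann(p^{j})$ by the previous paragraph, so it suffices to see $W(\Omega(a+i),b)-W(\Omega(a),b)\in Ann(p^{j})$. Since $L_{c+i''}=L_{c}+L_{i''}$ and $L_{i''}$ sends $A$ into the ideal $Ann(p^{j})$, expanding $E_{\ell,c+i''}-E_{\ell,c}$ produces a sum of monomials each containing a left multiplication by $i''$, and the same Property~$3$/$\rho^{-1}$ accounting as above keeps each in $Ann(p^{j})$; so $W(c+i'',b)-W(c,b)\in Ann(p^{j})$, iterating through $\Omega=W^{p^{n+1}!-1}$ gives $\Omega(a+i)-\Omega(a)\in Ann(p^{j})$, and one more application of the estimate finishes the proof. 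The main obstacle is exactly this accounting: $\rho^{-1}$ is division by $p$ only modulo $ann(p)$, so it can move an element of $Ann(p^{j})$ into the strictly larger $Ann(p^{j+s})$, and one must use that in every monomial $E_{\ell,c}$ the $(\rho^{-1})^{s_{\ell}}$-block is always followed by more than $\eta_{\ell}$ left multiplications, so Property~$3$ can be invoked $s_{\ell}$ times to re-descend the annihilator exponent to $j$; the failure of $*'$ to distribute in its first argument is a lesser nuisance, handled by transporting the perturbation through $W$ and $\Omega$ monomial by monomial.
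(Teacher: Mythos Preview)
Your argument is correct and follows essentially the same route as the paper: both proofs reduce the ideal property for $C_{2}$ to the Property~$3$ accounting that controls how the non-additive map $\rho^{-1}$ interacts with $Ann(p^{j})$ inside the monomials $E_{\ell,c}$, using that each block of $\lfloor\tfrac{p-1}{4}\rfloor$ left multiplications pushes into $p\cdot Ann(p^{m})\subseteq Ann(p^{m-1})$ and that there are more than $\eta_{\ell}$ such multiplications remaining after the $(\rho^{-1})^{s_{\ell}}$-block. The paper's own proof is much terser---it records the key step $e\in pA\cap Ann(p^{j})=p\,Ann(p^{j+1})$, $\rho^{-1}(e)\in Ann(p^{j+1})$, and then invokes Property~$3$ once more, leaving ``the same reasoning'' for the remaining ideal conditions---while you spell out separately the cases $a*'i$, $i*'a$, and the congruence $(a+i)*'(b+i')-a*'b\in Ann(p^{j})$, and you make explicit the use of Remark~\ref{czesto} and of $\Omega=W^{p^{n+1}!-1}$ to propagate the perturbation through $\Omega$.
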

\begin{proof}  We will show that $Ann(p^{j})$ is an ideal in pseudobrace $C_{2}$.
Indeed, let $a_{1}, \ldots , a_{\lfloor{\frac {p-1}4}\rfloor }\in C_{2}$ be such that some $a_{i}\in Ann(p^{j})$, denote 
 $e={a_{1}}\cdot ({a_{2}}\cdot (\cdots {a_{\lfloor{\frac {p-1}4}\rfloor }}))$. Then, by property $3$ we have $e\in pC_{2}\cap Ann(p^{j})$. Observe that $pC_{2}\cap Ann(p^{j})=p\cdot Ann(p^{j+1})$ hence  
$\rho ^{-1}(e)\in Ann(p)+pAnn(p^{j+1})$. Moreover, for 
$b_{1}, \ldots , b_{\lfloor{\frac {p-1}4}\rfloor }\in C_{2}$ we have then
 by property $3$:
 $b_{1}\cdot(\cdots  b_{\lfloor{\frac {p-1}4}\rfloor }\cdot \rho ^{-1}(e))\subseteq p\cdot Ann(p^{j+1})$.
 
 By the definition of $a*'b$ this argument applied several times implies that $\Omega (Ann(p^{2k}))\subseteq Ann(p^{i})$ and $Ann(p^{j})*'b\subseteq Ann(p^{j})$. 
  This follows from Remark \ref{moving}. Therefore $Ann(p^{j})*'b\subseteq Ann(p^{j})$. Notice that the definition of an ideal in a pseudobrace is slightly different than in brace, but 
the same reasoning also shows that $Ann(p^{j})$ is an ideal in $C_{2}$.
\end{proof}

\begin{lemma}\label{ideal2}
 Let notation be as in Lemma \ref{ideal}.
 Then $p^{j}A$   is an ideal in brace $C_{1}$
 (by \cite{paper1}) and in  
 pseudobrace $C_{2}$ for $j=1,2, \ldots $.  
\end{lemma}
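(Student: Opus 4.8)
The assertion for the brace $C_1$ is \cite{paper1}, so only the pseudobrace $C_2$ needs treatment, and the plan is to run the argument of Lemma \ref{ideal} with $ann(p^j)$ everywhere replaced by $p^j A$. At the pre-Lie level one uses the obvious fact that $p^j A$ is an ideal of $(C/ann(p^{2k}),+,\bullet)$: bi-additivity of $\bullet$ gives $(p^j x)\bullet y = p^j(x\bullet y) = x\bullet(p^j y)$, so any pre-Lie monomial one of whose arguments lies in $p^j A$ lies in $p^j A$. Recall also that, by Lemma \ref{sp3}, this pre-Lie ring satisfies Property $3$.

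First I would prove $W(p^j A)\subseteq p^j A$, which then gives $\Omega(p^j A)\subseteq p^j A$ since $\Omega = W^{p^{n+1}!-1}$. Write $a = p^j c$. By Remark \ref{moving} each summand $E_{m,a}(\upharpoonleft)$ of $W(a)$ may be rearranged into a monomial in which each of its $s_m$ occurrences of $\rho^{-1}$ is flanked, on the input side and on the output side, by at least $\lfloor\tfrac{p-1}{4}\rfloor$ consecutive copies of $L_a$ (there is room because $s_m\le m/(p-1)$, so the $s_m+1$ blocks of $\lfloor\tfrac{p-1}{4}\rfloor$ copies of $L_a$ use at most $m$ copies in all). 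Now read such a monomial from the inside out: by Property $3$ a block $L_a^{\lfloor(p-1)/4\rfloor}$ sends $A$ into $pA$ and $ann(p^i)$ into $p\cdot ann(p^i)$, so the innermost block raises the $p$-level of the argument from $j$ to at least $j+1$, each $\rho^{-1}$ then drops it by one while producing an $ann(p)$-error, and the next block both restores the level and annihilates that error. Hence $E_{m,a}(\upharpoonleft)\in p^j A$ for every $m$, so $W(a)\in p^j A$.

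Next, for $i\in p^j A$ and $a,b\in A$ I would check the two products $i*'b = W(\Omega(i),b)$ and $b*'i = W(\Omega(b),i)$. For the first, $\Omega(i)\in p^j A$ by the previous step, and each $E_{m,\Omega(i)}(b)$ lies in the pre-Lie ideal generated by $\Omega(i)$ (Remark \ref{czesto}), hence in $p^j A$; the occurrences of $\rho^{-1}$ are harmless because the $p$-level is already $\ge j$ at the place where $\Omega(i)$ enters, and the same relocation-plus-Property-$3$ bookkeeping keeps it $\ge j$. For the second, one applies the inside-out computation of the previous paragraph directly to $E_{m,\Omega(b)}(i)$ with $i\in p^j A$. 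It remains to verify the last clause in the definition of an ideal of a pseudobrace: for $i',j'\in p^j A$, axiom $(5)$ gives
\[
(a+i')*'(b+j') - a*'b \;=\; \bigl((a+i')*'b - a*'b\bigr) + (a+i')*'j',
\]
the second summand lies in $p^j A$ by the case just treated, and the first equals $W(\Omega(a+i'),b) - W(\Omega(a),b)$, which lies in $p^j A$ because $W$ induces a bijection on $A/p^j A$; this is the $p^j A$-analogue of the injectivity argument in the proof of Theorem \ref{23}, where $W(c+p^j d)-W(c)$, after the usual linearisation in $p^j d$, is a sum of terms each divisible by $p^j$.

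The only real obstacle is this control of $\rho^{-1}$ against the filtration by powers of $p$: $\rho^{-1}$ alone does not preserve $p^j A$, so one must repeatedly invoke Remark \ref{moving} to slide each $\rho^{-1}$ behind a block of at least $\lfloor\tfrac{p-1}{4}\rfloor$ copies of $L_a$, where Property $3$ exactly offsets the one power of $p$ lost to $\rho^{-1}$ and clears the $ann(p)$-errors. Once this is set up, everything else is a routine transcription of Lemma \ref{ideal}.
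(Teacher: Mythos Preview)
Your proposal is correct and is precisely the approach the paper takes: the paper's entire proof is the single sentence ``A similar proof as in Lemma \ref{ideal} works,'' and you have carried out exactly that transcription, replacing $ann(p^{j})$ by $p^{j}A$ throughout. You supply considerably more detail than the paper does (in particular the bookkeeping with $\rho^{-1}$ against the $p$-power filtration and the verification of the extra pseudobrace-ideal clause), but the strategy is identical.
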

\begin{proof} A similar proof as in Lemma \ref{ideal} works. 
\end{proof}

\begin{lemma}\label{property3}
  Let $p>3$ be a prime number, $n$ be a natural number and let $A$ be a pre-Lie ring of cardinality $p^{n}$.
 Let $A$ be a pre-Lie ring satysfying property $3$. Let $(A, +, \circ ')$ be the pseudobrace obtained from  the pre-Lie ring $(A, +, \cdot)$, by the generalised group of flows construction  from Definition \ref{gf}. 
  Then $(A, +, \circ ')$ satisfies Properties $1'$ and $1''$, and hence satisfies Property $1$.
\end{lemma}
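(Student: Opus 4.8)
\end{lemma}
\begin{proof}
The plan is to read off both properties directly from the defining formula $a*'b=W(\Omega(a),b)=\sum_{j\ge 1}\sigma_j E_{j,\Omega(a)}(b)$ (Definitions \ref{gf} and \ref{e}, with $\Omega$ the inverse of $W$ from Theorem \ref{23}), using that every $E_{j,c}$ begins with at least one pre-Lie left multiplication $L_c$ and that Property $3$ converts left multiplications into divisibility by $p$. Write $m=\lfloor\frac{p-1}{4}\rfloor$. The key first step is to analyse a single term $E_{j,c}(x)$ for $x\in A$. For $1\le j\le m-2$, so that $j<p$, one has $E_{j,c}(x)=L_c^{j}(x)$, a pure left-normed pre-Lie monomial with $j$ factors $L_c$. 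On the other hand every $E_{j,c}(x)$ with $j\ge m-1$ already lies in $pA$: such a term is either $L_c^{j}(x)$, in which case $j\ge m-1$ left multiplications put it in $pA$ by Property $3$, or of the form $L_c^{j-\eta_j}((\rho^{-1})^{s_j}(L_c^{\eta_j}(x)))$ (which occurs only for $j\ge p$), in which case $2\eta_j<j$ (Definition \ref{1}) forces the outer block $L_c^{j-\eta_j}$ to apply $L_c$ more than $j/2\ge p/2>m-1$ times, again putting it in $pA$. Hence, modulo $pA$, one has $a*'b\equiv\sum_{j=1}^{m-2}\sigma_j L_{\Omega(a)}^{j}(b)$: a combination of pure pre-Lie monomials in $\Omega(a)$ applied to $b$, of degree at most $m-2$ in $\Omega(a)$; and, since $L_c(pA)\subseteq pA$, this reduction is compatible with iterating $*'$.

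For Property $1'$, I would take $c_1,\dots,c_m\in A$ and a generator $c_1*'(c_2*'(\cdots*'(c_{m-1}*'c_m)))$ of $A^{m}$, substitute the congruence above into each of the $m-1$ occurrences of $*'$, and collect terms using $L_c(pA)\subseteq pA$. Modulo $pA$ the result is a $\mathbb{Z}$-linear combination of monomials $L_{\Omega(c_1)}^{j_1}L_{\Omega(c_2)}^{j_2}\cdots L_{\Omega(c_{m-1})}^{j_{m-1}}(c_m)$ with every $j_l\ge 1$, each of which applies $L$ a total of $j_1+\cdots+j_{m-1}\ge m-1$ times to an element of $A$ and hence lies in $pA$ by Property $3$ (for $m\le 2$ the conclusion is immediate: $m=1$ is Property $3$ itself on the shared additive group, and for $m=2$ already $c_1*'c_2\in pA$). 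Therefore $A^{m}\subseteq pA$, which is Property $1'$.

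For Property $1''$ I would run exactly the same expansion, now with the innermost argument an element $z\in ann(p^{i})$; the only additional point is that the discarded terms $E_{j,c}(z)$ lie in $p\cdot ann(p^{i})$. For $j$ not divisible by $p$ with $j\ge m$ this is the annihilator clause of Property $3$ applied to $L_c^{j}(z)$. For $j\ge p$ one has $s_j\ge1$ and $\eta_j=m s_j$, so iterating that clause $s_j$ times gives $L_c^{\eta_j}(z)\in p^{s_j}\cdot ann(p^{i})$; writing $L_c^{\eta_j}(z)=p^{s_j}u$ with $u\in ann(p^{i})$ we get $(\rho^{-1})^{s_j}(L_c^{\eta_j}(z))=u+v$ with $v\in ann(p^{s_j})$, whence $E_{j,c}(z)=L_c^{j-\eta_j}(u)+L_c^{j-\eta_j}(v)$, where $L_c^{j-\eta_j}(u)\in p\cdot ann(p^{i})$ since $j-\eta_j>m$, and $L_c^{j-\eta_j}(v)=0$ since $j-\eta_j\ge\eta_j=m s_j$ and $m s_j$ left multiplications annihilate $ann(p^{s_j})$. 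Since $L_c$ maps $ann(p^{i})$, hence $p\cdot ann(p^{i})$, into itself, the same substitution shows that a generator of $A*'(A*'(\cdots*'(A*'ann(p^{i}))))$ with $m$ copies of $A$ is, modulo $p\cdot ann(p^{i})$, a combination of monomials $L_{\Omega(c_1)}^{j_1}\cdots L_{\Omega(c_m)}^{j_m}(z)$ with every $j_l\ge1$, hence applying $L$ at least $m$ times to $z\in ann(p^{i})$, hence in $p\cdot ann(p^{i})$ by Property $3$. This gives Property $1''$; Property $1$ then follows since $1'$ and $1''$ together imply $1$.

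The step I expect to be the main obstacle is controlling the functions $\rho^{-1}$, which only imitate division by $p$: they are additive merely modulo $ann(p)$ and do not literally divide, so one cannot simply treat $E_{j,c}(x)$ as $p^{-s_j}L_c^{j}(x)$. What resolves this is that $\rho^{-1}$ enters only in the terms with $s_j\ge1$, and there the outer block $L_c^{j-\eta_j}$ already carries more than $m$ left multiplications (the inequality $2\eta_j<j$), so Property $3$ places such a term in $pA$ for a reason unrelated to the division; for Property $1''$ the bound $j-\eta_j\ge\eta_j=m s_j$ together with the vanishing of $m s_j$ left multiplications on $ann(p^{s_j})$ plays the analogous role. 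Legitimately shifting the $\rho^{-1}$'s (always leaving at least $m$ left multiplications immediately in front of each one) is exactly the mechanism of Remark \ref{moving}, used here in the same way as in Lemma \ref{ideal} and in the proof of Theorem \ref{w}.
\end{proof}
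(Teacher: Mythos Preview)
Your argument is correct and is precisely the kind of detailed unpacking the paper has in mind: the paper's own proof is the single line ``A similar proof as in Lemma \ref{ideal} works,'' and the mechanism in that lemma is exactly what you carry out --- expand $a*'b=\sum_j\sigma_jE_{j,\Omega(a)}(b)$, use Property~$3$ to push the high-$j$ terms into $pA$ (respectively $p\cdot ann(p^i)$), control the $\rho^{-1}$ via the outer block $L_c^{j-\eta_j}$ and the inequality $2\eta_j<j$ as in Remark~\ref{moving}, and then iterate. Your case analysis (cutoff $j\le m-2$ for $1'$, cutoff $j\le m-1$ for $1''$, together with the separate treatment of $m\le 2$) and your handling of the $ann(p^{s_j})$ defect from $\rho^{-1}$ match the spirit of the paper's sketch, only made explicit.
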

\begin{proof}
 A similar proof as in Lemma \ref{ideal} works.
\end{proof}

\subsection{  Connections between $\odot $ and $\odot '$}

 The main result of this section is the following.

\begin{lemma}\label{c} Let notation and assumption be as in Lemma \ref{ideal}. 
 Consider the factor brace $A_{1}=C_{1}/Ann(p^{2k})$ and the factor pseudobrace $A_{2}=C_{2}/Ann(p^{2k})$. Then for $a,b\in A$ 
\[[a]_{Ann(p^{2k})}\odot [b]_{Ann(p^{2k})}=[a]_{Ann(p^{2k})}\odot '[b]_{Ann(p^{2k})}\] where 
  the operations $\odot $, $\odot '$ are defined for $a,b$ in the set $A$  by 
$[a]_{Ann(p^{2k})}\odot [b]_{Ann(p^{2k})}=[\wp^{-1}((p^{k}a)*b)]_{Ann(p^{2k})}$ and  $[a]_{Ann(p^{2k})}\odot' [b]_{Ann(p^{2k})}=[\wp^{-1}((p^{k}a)*'b)]_{Ann(p^{2k})}$. 
\end{lemma}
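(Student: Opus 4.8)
The plan is to peel off the outer $\wp^{-1}$ and the coset bars, and then to compare the two $*$-operations by descending to the pre-Lie ring of Theorem~\ref{preLie}. First I would record that both $[a]_{Ann(p^{2k})}\odot[b]_{Ann(p^{2k})}$ and $[a]_{Ann(p^{2k})}\odot'[b]_{Ann(p^{2k})}$ are classes of the form $[\wp^{-1}(z)]_{Ann(p^{2k})}$ with $z\in p^{k}A$: by Lemma~\ref{ideal2}, $p^{k}A$ is an ideal in $C_{1}$ and in $C_{2}$, so $(p^{k}a)*b$ and $(p^{k}a)*'b$ lie in $p^{k}A$ and $\wp^{-1}$ may be applied to them. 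Since $p^{k}\wp^{-1}$ is the identity on $p^{k}A$, since $*$ and $*'$ are additive in their second variable (the latter because $C_{2}$ is a pseudobrace), and since two elements of $A$ are congruent modulo $Ann(p^{2k})=\{r\in A:p^{2k}r=0\}$ precisely when their difference is killed by $p^{2k}$, multiplying the asserted identity by $p^{2k}$ turns it into
\[
(p^{k}a)*(p^{k}b)=(p^{k}a)*'(p^{k}b)\qquad\text{in }A,
\]
for all $a,b\in A$; it therefore suffices to prove this last equality, or, equivalently, that $\odot$ and $\odot'$ coincide as maps $A\times A\to A/Ann(p^{2k})$.

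Next I would bring in the pre-Lie ring $(A,+,\bullet)$ of Theorem~\ref{preLie}, which is built from the brace $C_{1}$ and, by Lemma~\ref{sp3}, satisfies Property~$3$; recall that $\circ'$, hence $*'$, is obtained from $(A,+,\bullet)$ by the modified group of flows of Definition~\ref{gf}. The strategy is to exhibit one and the same map $G$, built from $\bullet$, $+$ and $\rho^{-1}$ and of a shape depending only on $|A|$, that computes both $\odot$ and $\odot'$. On the brace side this is the passage from pre-Lie rings to braces of \cite{paper1}: the defining formula of $\bullet$ in Theorem~\ref{preLie} presents $[a]\bullet[b]$ as an explicit integral combination of the elements $(\xi^{i}p^{k}a)*b$ whose leading coefficient is invertible modulo $p^{n}$, so, using Property~$1$, this combination can be inverted modulo the higher terms of the left nilpotent series, the recursion terminating because $A^{n+1}=0$; this recovers $\odot$ from $(A,+,\bullet)$ and produces $G$. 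On the flows side, $a*'b=W(\Omega(a),b)$ is by construction written entirely in $\bullet$, $+$ and $\rho^{-1}$ through the operators $E_{j,a}$, the series $W$ and $\Omega=W^{p^{n+1}!-1}$ (Definitions~\ref{1}, \ref{e} and Theorem~\ref{23}), and the modified group of flows is precisely the explicit form of the above passage, with $\rho^{-1}$ in place of division by $p$; unwinding $\odot'(a,b)=[\wp^{-1}((p^{k}a)*'b)]_{Ann(p^{2k})}$, and using Property~$3$ together with Remark~\ref{moving} to check that every $\rho^{-1}$ is applied to an element of the relevant $p^{i}A$ (so that the value does not depend on the choice of $\rho^{-1}$), one obtains $\odot'=G$ as well.

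Combining the two, $\odot=G=\odot'$, which by the first step is exactly the assertion. I expect the main obstacle to be the identification $\odot'=G$: one must verify that the combinatorics of the flows expansion — the integers $\sigma_{j}$, the operators $E_{j,a}$, the exponents $s_{j}$ and $\eta_{j}$ — reproduce, term by term, the recursion that inverts the formula of Theorem~\ref{preLie}, and that the independence of the choice of $\rho^{-1}$ is preserved throughout. This is, however, precisely the bookkeeping already carried out — for the brace/recursion side in \cite{paper1}, and for the flows side in Section~\ref{groupflows} and in \cite{paper3} — so that the proof consists in quoting those results in the correct order rather than in a fresh computation.
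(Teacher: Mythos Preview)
Your opening reduction is correct and matches the paper: both arguments boil the claim down to $(p^{k}a)*(p^{k}b)=(p^{k}a)*'(p^{k}b)$ in $A$, by multiplying through by $p^{2k}$ and using additivity in the second slot.

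From there, however, your approach and the paper's diverge, and yours has a real gap. The paper does \emph{not} try to match two global formulas $G$ built from $\bullet$, $+$, $\rho^{-1}$. Instead it localises to $p^{k}A$ and uses a structural fact you never invoke: the sub-pseudobrace $p^{k}A$ (equivalently $p^{k}C/ann(p^{2k})$) is \emph{strongly nilpotent of index $<p$}. In that regime the modified group of flows coincides with the ordinary one, so $(p^{k}A,+,\circ')$ is the genuine group of flows of $(p^{k}A,+,\bullet)$. On the brace side, the same strong nilpotency bound puts $(p^{k}A,+,\circ)$ inside the Lazard/passage range of \cite{passage,Senne}, so it is the group of flows of some pre-Lie operation $\cdot$ on $p^{k}A$; the defining identity of $\bullet$ in Theorem~\ref{preLie} then forces $\cdot=\bullet$ on $p^{k}A$. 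Two group-of-flows structures built from the \emph{same} pre-Lie ring agree, giving $*=*'$ on $p^{k}A$.

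Your substitute --- a single $G$ computed on both sides --- is essentially the assertion that ``the modified group of flows is precisely the explicit form of the passage of \cite{paper1}''. That is exactly the content of Theorem~\ref{main1}, which \emph{uses} Lemma~\ref{c} as an input; quoting it here is circular. The references you cite (\cite{paper1}, \cite{paper3}, Section~\ref{groupflows}) each describe one side of the correspondence, but none of them proves that the two recursions produce the same $G$ term by term outside the Lazard range. The missing idea is precisely the restriction to $p^{k}A$, where strong nilpotency $<p$ collapses both constructions to the classical one and makes the comparison immediate.
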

\begin{proof}   We denote $[a]=[a]_{ann(p^{2k})}$ for $a\in C$.
 
{\em Part $1$.} Observe that $p^{k}C_{2}$  is obtained by the construction of generalised group of flows from the pre-Lie ring $(p^{k}C/ ann(p^{2k}), +,\bullet )$.   Because pre-Lie ring $p^{k}C/ann(p^{2k})$ is strongly nilpotent on nilpotency index less than $p$, then the group of flows construction 
coincides with the construction of the group of flows.

Therefore $(p^{k}C/ann(p^{2k}), +, \circ ' )$ is obtained by the construction of group of flows from the pre-Lie ring $(p^{k}C/ ann(p^{2k}), +, \bullet )$.

{\em Part 2.} Observe on the other hand that $(p^{k}C/ann(p^{2k}), +, \circ )$ is a subbrace of brace $(C/ann(p^{2k}), +, \circ )$.
Recall that $A=C/ann(p^{2k})$ and  $(A, +, \circ)$ is a brace.

 Observe that brace $B=p^{k}A$ has strong nilpotency index less than $p$. 
By using the same proof as in \cite{passage} we see that  brace $B=p^{k}A$
  is obtained as a group of flows of a left nilpotent  pre-Lie ring $(B, +, \cdot )$. Because brace $B$ is strongly nilpotent of strong nilpotency order less than $p$ we can use 
the same proofs as in \cite{passage} (or we can prove this fact  by  using a more recent and more general result proved in \cite{Senne} that every brace of the left nilpotency index less than $p$ can be obtained as the group of flows of some left nilpotent pre-Lie algebra). We obtain that 
for $x,y\in B,$ 
\[\sum_{i=0}^{p-2}\xi ^{p-1-i}((\xi ^{i}x)* y)=(p-1)x\cdot y.\]
Therefore, for $x=p^{k}a$, $y=p^{k}b$ for $a,b\in C$ we have:
\[\sum_{i=0}^{p-2}\xi ^{p-1-i}((\xi ^{i}p^{k}a)* (p^{k}b))=(p-1)(p^{k}a)\cdot (p^{k}b).\]

For $a\in A$ denote $[a]=[a]_{ann(p^{2k})}$.
Then, by definition of $\bullet $ we have 
  \[p^{2k}[a]\bullet [ b]= [(p^{k'}a)\bullet (p^{k}b)]=[p^{k}a]\cdot [p^{k}b].\] 
 Hence, as the pre-Lie operation $\bullet $ is distributive with respect to addition we get
\[[p^{k}a]\bullet [p^{k}b]=[p^{k}a]\cdot [p^{k}b],\]
 for $a,b\in C$.
 Therefore, on $p^{k}C/ann(p^{2k})$ the operations $\bullet $ and $\cdot $ coincide. 
 Therefore, $(p^{k}C/ann(p^{2k}), +, \circ )$ is obtained by the formula of the group of flows from pre-Lie ring 
$(p^{k}C/ ann(p^{2k}), +, \bullet )$.

 This shows that  $(p^{k}a)*(p^{k}b)=(p^{k}a)* ' (p^{k}b)$ for $a,b\in A$.
 Observe that $(p^{k}a)*(p^{k}b)\in p^{2k}A$ since $C$ satisfies Property $2$ by Lemma \ref{property2'}, so $p^{k}A$ is an ideal in $C$ by definition of Property $2$.
 This is implies \[\wp^{-1}(\wp^{-1}((p^{k}a)*(p^{k}b)-(p^{k}a)* ' (p^{k}b)))\in Ann(p^{2k}).\]
 Therefore, 
$[a]_{Ann(p^{2k})}\odot [b]_{Ann(p^{2k})}=[a]_{Ann(p^{2k})}\odot '[b]_{Ann(p^{2k})}$ for all $a,b\in A$.

\end{proof}

 Our main result in this subsection is the following
\begin{lemma}\label{property2}
  Let $p>3$ be a prime number, $n$ be a natural number and let $A$ be a pre-Lie ring of cardinality $p^{n}$.
 Let $A$ be a pre-Lie ring satysfying property $3$.
 Let $(A, +, \circ ')$ be the pseudobrace obtained from  the pre-Lie ring $(A, +, \cdot)$, by the
 generalised group of flows construction  from Definition \ref{gf}.  Then $(A, +, \circ ')$ satisfies Property $2$. 
\end{lemma}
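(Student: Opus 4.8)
The plan is to deduce everything from Theorem~\ref{23} applied on suitable quotients. Recall that, by definition, asserting Property~$2$ for $(A,+,\circ')$ amounts to two things: the \emph{ideal conditions} --- that $p^{i}A$ and $ann(p^{i})$ are ideals of $(A,+,\circ')$ for every $i$ and $(p^{i}A)*'ann(p^{j})\subseteq ann(p^{j-i})$ for $j\geq i$ --- together with the equivalence $p^{2k}a=p^{2k}b\iff p^{k}a^{\circ' p^{k}}=p^{k}b^{\circ' p^{k}}$.

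First I would settle the ideal conditions. Observe that $ann(p^{i})$ and $p^{i}A$ are automatically two-sided ideals of the \emph{pre-Lie} ring $(A,+,\cdot)$, because $p^{i}(x\cdot y)=(p^{i}x)\cdot y=x\cdot(p^{i}y)$. To transport this to $(A,+,\circ')$, recall $a*'b=W(\Omega(a),b)=\sum_{i}\sigma_{i}E_{i,\Omega(a)}(b)$; one checks, exactly as in Lemmas~\ref{ideal} and \ref{ideal2}, that for $r\in ann(p^{j})$ each monomial $E_{i,\Omega(a)}(r)$ lies in $p^{s_{i}}ann(p^{j+s_{i}})\subseteq ann(p^{j})$, using Property~$3$ to guarantee that the iterated maps $L_{\Omega(a)}$ preceding each block of $\rho^{-1}$'s already land in the relevant $p^{\bullet}A$ (here one invokes $2\eta_{j}<j$ from Definition~\ref{1} and the identity $p^{s}A\cap ann(p^{j})=p^{s}\,ann(p^{j+s})$, valid in a finite abelian $p$-group, which makes $\rho^{-1}$ legitimately applicable); the cases $r*'a$ and $p^{i}A$ are symmetric, and the mixed term $(a+r)*'(b+r')-a*'b$ is expanded as in Lemma~\ref{ideal}, the only delicate point being the mild non-additivity of $\rho^{-1}$, handled via Remark~\ref{moving}. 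The same computation also yields $(p^{i}A)*'ann(p^{j})\subseteq ann(p^{j-i})$. Since all the sets $ann(p^{i})$, $p^{i}A$ are pre-Lie ideals, every quotient pre-Lie ring $A/ann(p^{i})$ again satisfies Property~$3$, so Theorem~\ref{23} applies to it; consequently $W$, and hence $\Omega$, descends to mutually inverse bijections on each $A/ann(p^{i})$ (using Remark~\ref{2}, so that no choice of $\rho^{-1}$ has to be fixed).

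For the equivalence I would first record the identity $a^{\circ' p^{k}}=W\!\big(p^{k}\Omega(a)\big)$, which follows from $a=W(\Omega(a))$ together with the relation $W(i\cdot c)=W(c)^{\circ' i}$ established in the proof of Theorem~\ref{w}. Forward direction: if $p^{2k}a=p^{2k}b$ then $a-b\in ann(p^{2k})$, hence $\Omega(a)-\Omega(b)\in ann(p^{2k})$, hence $p^{k}\Omega(a)-p^{k}\Omega(b)\in ann(p^{k})$; applying $W$, which descends modulo the ideal $ann(p^{k})$, gives $a^{\circ' p^{k}}-b^{\circ' p^{k}}\in ann(p^{k})$, i.e. $p^{k}a^{\circ' p^{k}}=p^{k}b^{\circ' p^{k}}$. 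Converse: from $p^{k}a^{\circ' p^{k}}=p^{k}b^{\circ' p^{k}}$ we get $W(p^{k}\Omega(a))-W(p^{k}\Omega(b))\in ann(p^{k})$; since $W$ is injective on $A/ann(p^{k})$ by Theorem~\ref{23} applied to that quotient, $p^{k}\Omega(a)-p^{k}\Omega(b)\in ann(p^{k})$, i.e. $\Omega(a)-\Omega(b)\in ann(p^{2k})$; applying $W$ modulo the ideal $ann(p^{2k})$ now yields $a-b\in ann(p^{2k})$, i.e. $p^{2k}a=p^{2k}b$.

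The main obstacle is not the equivalence --- which is essentially formal once Theorem~\ref{23} is available on the quotients --- but the bookkeeping in the ideal conditions: one must track the interleaving of the iterated maps $L_{\Omega(a)}$ with the $\rho^{-1}$'s and check it never leaves the prescribed $ann(p^{\bullet})$, and verify that $(a+r)*'(b+r')-a*'b$ stays in the ideal despite $\rho^{-1}$ failing to be exactly additive. This is exactly the point where Property~$3$ (through Definition~\ref{1}) and Remark~\ref{moving} enter, and it parallels the arguments already given in Lemma~\ref{ideal}.
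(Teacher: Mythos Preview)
Your argument is correct and, for the equivalence part, is organised a little differently from the paper. Both proofs handle the ideal conditions identically (by the Lemma~\ref{ideal} bookkeeping you outline), and both ultimately rest on Theorem~\ref{23} applied to a suitable quotient. The paper, however, first reparametrises $a\mapsto W(a)$, rewrites the statement as $p^{2k}W(a)=p^{2k}W(b)\Longleftrightarrow p^{k}W(p^{k}a)=p^{k}W(p^{k}b)$, and then passes to $B=p^{k}A$; there it exploits that $B$ is strongly nilpotent of index $<p$, so that on $B$ the generalised group of flows is the ordinary one and $W$ is a genuine pre-Lie polynomial, allowing the forward implication to be read off from bilinearity of~$\cdot$. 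Your route bypasses this: you use the identity $a^{\circ' p^{k}}=W(p^{k}\Omega(a))$ and the observation that Property~$3$ descends to $A/ann(p^{i})$, so Theorem~\ref{23} gives bijectivity of $W$ (hence of $\Omega$) on each such quotient; both implications are then purely formal. What you gain is that you never need the strong-nilpotency of $p^{k}A$ or the fact that $(p^{k}A,+,\circ')$ is an honest brace; what the paper's detour buys is a very explicit, $\rho^{-1}$--free computation for the forward implication on $B$.
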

\begin{proof}
  By using a very similar  proof as in Lemma \ref{ideal}  we obtain that $ann(p^{i})$ and $p^{i}A$ are ideals 
 in $A$. Observe that the property $(p^{i}A)*ann(p^{j})\subseteq ann(p^{j-i})$ follows from the fact that $\Omega (p^{i}A)\subseteq p^{i}A$ and because the multiplication by $p^{i}A$ is distributive in pre-Lie rings (and then it follows by  using a similar proof as in Lemma \ref{ideal}).

It remains to show that 
  for each $a,b\in A$, 
 we have that $p^{2k}a=p^{2k}b$ if and only if $p^{k}a^{\circ ' p^{k}}=p^{k}b^{\circ ' p^{k}}$. 

Observe first that by Theorem \ref{23} we have that the map $W:A\rightarrow A$ is injective, so it suffices to show that $p^{2k}W(a)=p^{2k}W(b)$ if and only if 
$p^{k}W(a)^{\circ 'p^{k}}=p^{k}W(b)^{\circ ' p^{k}}$. 
 By Lemma \ref{w} the later assertion is equivalent to 
$p^{k}W(p^{k}a)=p^{k}W(p^{k}b)$. Observe that the pre-Lie ring $p^{k}A$ is strongly nilpotent 
 of strong nilpotency index less than $p$, so the construction of the group of flows from \cite{AG} agrees with
 the constrction of generalised group of flows defined in Definition \ref{gf}. 
 Therefore $(p^{k}A, +, \circ ')$ is a brace.  Therefore, we can denote $B=p^{k}A$ and restate our statement as to show that in the brace $B$ we have
that $p^{k}x=p^{k}y$ is equivalent to $p^{k}W(x)=p^{k}W(y)$ (where $x=p^{k}a, y=p^{k}b$).

 Observe first  that $p^{k}x=p^{k}y$ implies $p^{k}W(x)=p^{k}W(y)$ for $x,y\in B$. 
 Indeed, denote $y=x+c$ then we have $p^{k}c=0$. Also $p^{k}(W(x)-W(y))$ can be writen as a sum of elements of the form $p^{k}d$ where $d$ is a product of some elements from the set $\{x,c\}$ and $c$ appears at least once under the operation $\cdot$. The operation $\cdot $ is distributive with addition, so $p^{k}d$   is a product of some elements from the set $\{x,c, p^{k}c\}$ and $p^{k}c$ appears at least once. Therefore, $p^{k}(W(x)-W(y))=0$.  This shows that $p^{k}x=p^{k}y$ implies $p^{k}W(x)=p^{k}W(y)$ for $x,y\in B$.

 We will now show that $p^{k}W(x)=p^{k}W(y)$ implies $p^{k}x=p^{k}y$. 
 Observe that $p^{k}W(x)=p^{k}W(y)$ is equivalent to  $W(x)-W(y)\in ann(p^{k}).$ Therefore, it suffices to show that
 $[W(a)]_{ann (p^{k})}=[W(b)]_{ann(p^{k})}$
 implies $[a]_{ann(p^{k})}=[b]_{ann(p^{k})}$ where $ann(p^{k})=\{a\in B:p^{k}a=0\}$. Observe that function $W:B/p^{k}B\rightarrow B/p^{k}B$ is injective because $B/p^{k}B$ satisfies property $3$, since $A$ satisfies property $3$ (by Theorem \ref{23}). Therefore,  
$[W(a)]_{ann (p^{k})}=[W(b)]_{ann(p^{k})}$
 implies $[a]_{ann(p^{k})}=[b]_{ann(p^{k})}$, as required.   
\end{proof}
The following Lemma immediately follows from \cite{paper1}.
\begin{lemma}\label{property2'}
Let $A$ be a brace satisfying Property $1$. Then $A$ satisfies property $2$.
\end{lemma}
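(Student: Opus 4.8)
The plan is to invoke the results already assembled in the excerpt rather than re-derive anything from scratch. First I would recall that the statement "$A$ satisfies Property $1$ implies $A$ satisfies Property $2$" has two halves: the structural half, that $ann(p^{i})$ and $p^{i}A$ are ideals in $A$ for every $i$ with $(p^{i}A)*ann(p^{j})\subseteq ann(p^{j-i})$ for all $j\geq i$, and the arithmetic half, that $p^{2k}a=p^{2k}b$ if and only if $p^{k}a^{\circ p^{k}}=p^{k}b^{\circ p^{k}}$ for all $a,b\in A$. The structural half is exactly Proposition \ref{5} (Lemma $10$ of \cite{paper1}), so I would cite that directly.

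For the arithmetic half I would use the function $f$ from Lemma \ref{j}: recall $f$ is any function with $p^{k}f(a)=a^{\circ p^{k}}$. Then $p^{k}a^{\circ p^{k}}=p^{k}b^{\circ p^{k}}$ is equivalent to $p^{2k}f(a)=p^{2k}f(b)$, i.e. to $[f(a)]_{ann(p^{2k})}=[f(b)]_{ann(p^{2k})}$. One direction is immediate: if $p^{2k}a=p^{2k}b$, i.e. $[a]_{ann(p^{2k})}=[b]_{ann(p^{2k})}$, then applying the well-defined map $[x]_{ann(p^{2k})}\mapsto[f(x)]_{ann(p^{2k})}$ gives $[f(a)]_{ann(p^{2k})}=[f(b)]_{ann(p^{2k})}$. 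The converse is precisely the injectivity of that map, which is the content of Theorem \ref{f(a)} (Theorem $8$ of \cite{paper1}); note the remark immediately after Theorem \ref{f(a)} that its proof works for arbitrary $k$, so no hypothesis $p^{k(p-1)}A=0$ is needed. Combining the structural part (Proposition \ref{5}) with the arithmetic equivalence (Lemma \ref{j} together with Theorem \ref{f(a)}) gives exactly the two conditions defining Property $2$.

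I do not expect a genuine obstacle here, since the lemma is essentially a repackaging of Proposition \ref{5} and Theorem \ref{f(a)}; the only point requiring a little care is making sure the well-definedness and injectivity of $[a]_{ann(p^{2k})}\mapsto[f(a)]_{ann(p^{2k})}$ are cited with the correct generality in $k$ (handled by the post-Theorem-\ref{f(a)} remark), and that $f$ exists at all, which follows because $p^{k}(\cdot)\colon A\to p^{k}A$ is surjective so one may simply choose, for each $a$, an $f(a)$ with $p^{k}f(a)=a^{\circ p^{k}}$.
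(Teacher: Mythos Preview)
Your proposal is correct and matches the paper's proof, which simply cites ``Theorem~$8$ and Lemma~$10$ from \cite{paper1}'' --- precisely the results you invoke as Theorem~\ref{f(a)} and Proposition~\ref{5}. You have merely unpacked how the well-definedness and injectivity of $[a]_{ann(p^{2k})}\mapsto[f(a)]_{ann(p^{2k})}$ translate into the biconditional in Property~$2$, which the paper leaves implicit.
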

\begin{proof}
Theorem $8$ and Lemma $10$ from \cite{paper1}.
\end{proof}

\subsection{ Factor braces are groups of flows}
In the statement of the following theorem we use Lemmas \ref{sp3} and \ref{ideal} to see that the constructions are well defined. Let $C, C_{1}, C_{2}, A$ be as in Theorem \ref{main1} below. 
  By Lemma \ref{ideal}  $Ann(p^{2k})$ is an ideal in the brace $C_{1}$ and also in the pseudobrace $C_{2}$ where  
 $Ann(p^{2})= \{a\in A:p^{2k}a=0\}$.

\begin{theorem}\label{main1}
 Let $(C, +, \circ )$ be a brace satisfying properties $1'$ and $1''$  and let $C_{1}=(C/ann(p^{2k}), +, \circ )$ be the factor brace where $ann(p^{2k})=\{a\in C:p^{2k}a=0\}$. 
 Let $k$ be such that $p^{k(p-1)}C=0$. 
 Let $C_{2}=(C/ann (p^{2k}), +, \circ ')$ be obtained by the construction of modified
 group of flows from the pre-Lie ring $(C/ann(p^{2k}),+, \bullet )$ defined in Theorem \ref{preLie} for brace $C$.
 Denote $A=C/ann(p^{2k})$ as an abelian group, and $Ann(p^{k})=\{a\in A: p^{2k}a=0\}$. Then $C_{1}=(A, +, \circ )$, $C_{2}=(A, +, \circ ')$. 
 Then the factor  brace $C_{1}/ Ann(p^{2k})$ equals the factor pseudobrace $C_{2}/ Ann(p^{2k})$. 
 Therefore, the factor brace  $C_{1}/ann(p^{2k})$ is obtained by the construction of modified group of flows from the pre-Lie ring 
$C_{2}/ann(p^{2k})$. 
\end{theorem}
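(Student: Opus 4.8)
The plan is to express the multiplication of both quotients by the single universal formula of Theorem~\ref{71}, and then to identify the two formulas by means of Lemma~\ref{c}. Note first that $C_1/Ann(p^{2k})$ and $C_2/Ann(p^{2k})$ are well defined, since $Ann(p^{2k})=ann(p^{4k})/ann(p^{2k})$ is an ideal of the brace $C_1$ and of the pseudobrace $C_2$ by Lemmas~\ref{ideal} and \ref{ideal2}; both quotients have the same additive group $A/Ann(p^{2k})$.

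First I would check that $C_1$ and $C_2$ satisfy the hypotheses of Theorem~\ref{71} for one and the same $k$. Properties~$1'$ and $1''$ pass to factors of $C$ by an ideal, so $C_1=C/ann(p^{2k})$ satisfies Properties~$1'$, $1''$, hence Property~$1$, hence Property~$2$ (Lemma~\ref{property2'}), and $p^{k(p-1)}C_1=0$ because $p^{k(p-1)}C=0$. On the other side, the pre-Lie ring $(A,+,\bullet)$ of Theorem~\ref{preLie} satisfies Property~$3$ (Lemma~\ref{sp3}), hence its modified group of flows $C_2$ satisfies Properties~$1'$, $1''$ (Lemma~\ref{property3}), thus Property~$1$, and Property~$2$ (Lemma~\ref{property2}); and $p^{k(p-1)}C_2=p^{k(p-1)}A=0$. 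Since $C_1$ and $C_2$ have the same additive group $(A,+)$, Theorem~\ref{71} produces one expression $q$ in $+$, $\odot$ and a pullback $\rho^{-1}$ (applied only to elements of $pA$), whose order of operations depends only on $(A,+)$, such that
\[
[a]_{Ann(p^{2k})}*[b]_{Ann(p^{2k})}=q\bigl([a]_{Ann(p^{2k})},[b]_{Ann(p^{2k})}\bigr)
\]
in $C_1/Ann(p^{2k})$, and such that the same $q$, but with $\odot$ replaced by the operation $\odot'$ attached to $*'$, computes $[a]_{Ann(p^{2k})}*'[b]_{Ann(p^{2k})}$ in $C_2/Ann(p^{2k})$; in both cases the value does not depend on the choice of $\rho^{-1}$.

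Then I would apply Lemma~\ref{c}, which gives $[x]_{Ann(p^{2k})}\odot[y]_{Ann(p^{2k})}=[x]_{Ann(p^{2k})}\odot'[y]_{Ann(p^{2k})}$ for all $x,y\in A$; choosing the same $\rho^{-1}$ in both evaluations, the two formulas therefore take identical values, so $[a]_{Ann(p^{2k})}*[b]_{Ann(p^{2k})}=[a]_{Ann(p^{2k})}*'[b]_{Ann(p^{2k})}$ for all $a,b\in A$. As the additions agree too, the factor brace $C_1/Ann(p^{2k})$ equals the factor pseudobrace $C_2/Ann(p^{2k})$ (in particular the latter is a brace). Finally, since $C_2$ is the modified group of flows of $(A,+,\bullet)$, the set $Ann(p^{2k})$ is an ideal of $(A,+,\bullet)$ as well, and the functions $W$, $\Omega$, $\rho^{-1}$ of the construction descend to the quotient by it; hence $C_2/Ann(p^{2k})$ is the modified group of flows of the pre-Lie ring $(A/Ann(p^{2k}),+,\bullet)$, and combining this with the equality just established yields the claim.

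The step I expect to be the main obstacle is the claim, used above, that the formula of Theorem~\ref{71} is literally the same expression for $C_1$ and $C_2$: this is exactly the assertion of Theorem~\ref{71} that the order of the operations depends only on the additive group, together with the independence of the result from the choice of $\rho^{-1}$ (so that the two occurrences of $\rho^{-1}$ may be identified). A secondary, routine point is the verification that the modified group of flows construction commutes with passing to the quotient by $Ann(p^{2k})$, which runs exactly as for the ordinary group of flows, using Remark~\ref{moving} to move $\rho^{-1}$ past the factors $L_{i,a}$.
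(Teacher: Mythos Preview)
Your proposal is correct and follows essentially the same approach as the paper's own proof: verify Properties~$1$ and~$2$ for both $C_1$ and $C_2$ via Lemmas~\ref{sp3}, \ref{property3}, \ref{property2}, \ref{property2'}, invoke Theorem~\ref{71} to express the $*$-operation in each quotient by a single formula in $+$, $\odot$, $\rho^{-1}$ depending only on $(A,+)$, and then use Lemma~\ref{c} to identify $\odot$ with $\odot'$ so that the two formulas coincide. Your treatment is in fact slightly more explicit than the paper's (you spell out why the quotients are well defined via Lemmas~\ref{ideal}, \ref{ideal2}, and why the modified group of flows passes to the quotient), but the underlying strategy is identical.
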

\begin{proof}
\begin{enumerate}
\item By Proposition $20$  from \cite{paper1} the pre-Lie ring $(C/ann(p^{2k}),+, \bullet )$ satisfies property $3$. 

\item By assumptions of this theorem  brace $A_{1}$ satisfies properties $1'$ and $1''$, and by Lemma \ref{property3}  pseudobrace $A_{2}$ satisfies properties $1'$ and $1''$. Therefore, $A_{1}$ and $A_{2}$ satisfy property $1$. 
\item By  Lemma \ref{property2'} pseudobrace $A_{1}$ satisfies property $2$, and by Lemma \ref{property2} brace $A_{2}$ satisfies property $2$. 
\item It was showed in Lemma \ref{c}  that the  operation $\odot $ agrees on brace  $(A/Ann(p^{2k}), +, \circ )$ and on the pseudobrace  $(A/Ann(p^{2k}), +, \circ ')$.
\item  By Theorem \ref{71} applied to brace $A_{1}=(A/Ann(p^{2k}),+, \circ )$ and pseudobrace $A_{2}=(A/Ann(p^{2k}), +, \circ ')$   we obtain that the operation $*$ is the same on brace $A_{1}$ and pseudobrace $A_{2}$, because it is obtained by using operations $\odot , +, \rho ^{-1}$,
 and the formula is the same for $A_{1}$ and $A_{2}$. So $[a]_{Ann(p^{2k})}\circ [b]_{Ann(p^{2k}}=[a]_{Ann(p^{2k})}\circ '[b]_{Ann(p^{2k})}$ for $a,b\in A$.

\item  Therefore,  brace $A_{1}$ and  pseudobrace $A_{2}$ are the same. Since the pseudobrace $C_{2}$ is obtained as a generalised group of flows
 from pre-Lie ring $(C, +, \bullet )$ then 
$A_{2}$ is obtained be the construction of generalised group of flows from factor pre-Lie ring 
$(C, +,\bullet )$  (it can be seen by reasoning similarly as in Lemma \ref{ideal}).

 \item Consequently brace  $A/ann(p^{2k})$ is obtained as a generalised group of flows. Note that the brace  $C/ann(p^{4k})$ is isomorphic to brace  $A/Ann(p^{2k})$.       
\end{enumerate}
\end{proof}
We get the following result
\begin{theorem}\label{main}
Let $(C, \circ , +)$ be a brace satisfying Property $1'$ and Property $1''$.
 Let $k$ be such that $p^{k(p-1)}C=0$. 
  Define for $a,\in C$:
\[[a]_{ann(p^{4})}\cdot [b]_{ann(p^{4k})}=-(1+p+p^{2}+\ldots +p^{n})\sum_{i=0}^{p-2}\xi ^{p-1-i}((\xi ^{i}p^{k}a)* b)]_{ann(p^{4k})}.\]
 Then $(C/ann(p^{4}), \cdot , +)$ is a pre-Lie ring, and the brace 
 $(C/ann(p^{4}), \cdot , +)$ is obtained by the construction of modified group of flows (from Definition \ref{gf}) 
 applied to the pre-Lie ring $(C/ann(p^{4}), \cdot , +)$.
\end{theorem}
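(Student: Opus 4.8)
The plan is to obtain this theorem as a reformulation of Theorem~\ref{main1}, combined with the explicit pre-Lie formula of Theorem~\ref{preLie}; the only genuine work is to identify the quotients occurring in Theorem~\ref{main1} with those in the statement and to record that the pre-Lie multiplication descends. I note first that $C$ satisfies Property~$1$ (since it satisfies Properties~$1'$ and~$1''$), so by \cite{paper1} the subsets $ann(p^i)$ and $p^iC$ are ideals of $C$, one has $(p^iC)*ann(p^j)\subseteq ann(p^{j-i})$ for $j\ge i$, and $C$ satisfies Property~$2$; moreover by Lemma~\ref{sp3} the pre-Lie ring built below satisfies Property~$3$, so the modified group of flows construction of Definition~\ref{gf} is defined on it.

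\textbf{Step 1 (the pre-Lie ring).} Apply Theorem~\ref{preLie} to $C$ with the given $k$; this gives the left nilpotent pre-Lie ring $(C/ann(p^{2k}),+,\bullet)$ with $[a]_{ann(p^{2k})}\bullet[b]_{ann(p^{2k})}=-(1+p+\dots+p^n)\sum_{i=0}^{p-2}\xi^{p-1-i}((\xi^i p^k a)*b)$. Since $ann(p^{2k})\subseteq ann(p^{4k})$ there is a surjective additive map $q\colon C/ann(p^{2k})\to C/ann(p^{4k})$ with kernel $ann(p^{4k})/ann(p^{2k})$, and this kernel is precisely the ideal $Ann(p^{2k})=\{x\in C/ann(p^{2k}):p^{2k}x=0\}$ of Lemma~\ref{ideal}. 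I would then verify directly that the formula of the statement (read with all brackets equal to $[\,\cdot\,]_{ann(p^{4k})}$) is a well-defined operation $\cdot$ on $C/ann(p^{4k})$: replacing $a$ by $a+r$ with $r\in ann(p^{4k})$ changes the bracketed element only by a sum of terms $(\xi^i p^k r)*b$, and $p^k r\in ann(p^{3k})$ with $ann(p^{3k})$ an ideal, so each such term lies in $ann(p^{3k})\subseteq ann(p^{4k})$; replacing $b$ by $b+s$ with $s\in ann(p^{4k})$ changes it only by terms $(\xi^i p^k a)*s\in(p^kC)*ann(p^{4k})\subseteq ann(p^{3k})\subseteq ann(p^{4k})$. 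Consequently $q$ intertwines $\bullet$ and $\cdot$; as $q$ is additive and surjective and $(C/ann(p^{2k}),+,\bullet)$ satisfies the pre-Lie identity and both distributive laws, so does $(C/ann(p^{4k}),+,\cdot)$, which is therefore a left nilpotent pre-Lie ring --- namely the quotient of $(C/ann(p^{2k}),+,\bullet)$ by the ideal $Ann(p^{2k})$.

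\textbf{Step 2 (the brace is the modified group of flows).} Apply Theorem~\ref{main1} to $C$: writing $C_1=(C/ann(p^{2k}),+,\circ)$ and $C_2=(C/ann(p^{2k}),+,\circ')$, where $C_2$ is obtained by the modified group of flows construction from $(C/ann(p^{2k}),+,\bullet)$, Theorem~\ref{main1} gives that $C_1/Ann(p^{2k})=C_2/Ann(p^{2k})$ and that this is the modified group of flows of the quotient pre-Lie ring $(C/ann(p^{2k}),+,\bullet)/Ann(p^{2k})$. By the standard brace isomorphism theorem and the identification $Ann(p^{2k})=ann(p^{4k})/ann(p^{2k})$ from Step~1, the left side is the brace $(C/ann(p^{4k}),+,\circ)$, while by Step~1 the quotient pre-Lie ring on the right is exactly $(C/ann(p^{4k}),+,\cdot)$. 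Hence the brace $(C/ann(p^{4k}),+,\circ)$ is obtained by the modified group of flows construction applied to the pre-Lie ring $(C/ann(p^{4k}),+,\cdot)$, which is the assertion.

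\textbf{Main obstacle.} There is no difficulty beyond what is already proved in Theorem~\ref{main1}; the substance of the present theorem is just the translation of that result into the explicit language of the displayed pre-Lie multiplication, plus the annihilator bookkeeping of Step~1. The one step that needs a little care --- and it is already carried out inside the proof of Theorem~\ref{main1}, steps~6--7 --- is that the modified group of flows construction commutes with passage to a quotient by an ideal: this uses that each monomial $E_{j,a}$ building $W$ preserves ideals and commutes with $\rho^{-1}$ up to elements of the relevant $ann(p^i)$ (Remarks~\ref{czesto} and~\ref{moving}), so that $W$ descends along $q$, and then $\Omega=W^{p^{n+1}!-1}$ descends as well once one knows that $W$ stays injective on the quotient (Theorem~\ref{23}, the quotient still satisfying Property~$3$).
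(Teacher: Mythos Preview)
Your proposal is correct and follows essentially the same route as the paper: the paper's proof is a one-line citation of Theorem~\ref{main1} together with the identification $C/ann(p^{4k})\cong A/Ann(p^{2k})$ noted at the end of that proof. You have simply unpacked this identification in more detail (the annihilator bookkeeping of Step~1 and the descent of the group-of-flows construction in the ``main obstacle'' paragraph), which is helpful but not a different argument.
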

\begin{proof} It follows from Theorem \ref{main1} and from the last sentence in the proof of Theorem \ref{main1}.
\end{proof}

\begin{theorem}\label{rank1}

Let $(A, \circ , +)$ be a brace of cardinality $p^{n}$ for a prime number $p$ and for a natural number $n$.
 Suppose that the additive group $(A, +)$ has rank not exceeding ${\frac {p-1}4}$. 
  Let $k$ be such that $p^{k(p-1)}A=0$. Define 
\[[a]_{ann(p^{4})}\cdot [b]_{ann(p^{4k})}=-(1+p+p^{2}+\ldots +p^{n})\sum_{i=0}^{p-2}\xi ^{p-1-i}((\xi ^{i}p^{k}a)* b)]_{ann(p^{4k})}.\]
 Then $(A/ann(p^{4}), \cdot , +)$ is a pre-Lie ring, and the brace 
 $(A/ann(p^{4}), \cdot , +)$ is obtained by the construction of modified group of flows 
 applied to this pre-Lie ring,  provided that this pre-Lie ring satisfies assumptions of Definition \ref{gf}.
\end{theorem}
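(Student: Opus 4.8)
The plan is to obtain Theorem~\ref{rank1} as a direct specialization of Theorem~\ref{main}: the only thing to verify is that a brace whose additive group has rank at most $\lfloor\frac{p-1}{4}\rfloor$ automatically satisfies Properties $1'$ and $1''$, after which Theorem~\ref{main} supplies both the pre-Lie ring structure and its modified group of flows presentation. First I would dispose of the degenerate cases: if $p\le 3$ then $\lfloor\frac{p-1}{4}\rfloor = 0$, so the rank hypothesis forces $A=\{0\}$ and the statement is vacuous; hence we may assume $p\ge 5$, in particular $p>3$, so that the constructions of the preceding sections are all available.

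Next I would invoke Lemma~\ref{r}. Since $(A,+)$ is a direct sum of at most $\lfloor\frac{p-1}{4}\rfloor$ cyclic groups, Lemma~\ref{r} gives $a_{1}*(a_{2}*(\cdots*(a_{c}*b)))\in pL(b)$ for all $a_{1},\dots,a_{c},b\in A$ with $c=\lfloor\frac{p-1}{4}\rfloor$, and hence that $A$ satisfies Properties $1'$ and $1''$, so also Property $1$. In particular $ann(p^{i})$ and $p^{i}A$ are ideals of $A$ with $(p^{i}A)*ann(p^{j})\subseteq ann(p^{j-i})$, and by Lemma~\ref{sp3} the pre-Lie ring $(A/ann(p^{2k}),+,\bullet)$ of Theorem~\ref{preLie} satisfies Property $3$; this property descends through the further quotient by the annihilator of $p^{2k}$, i.e.\ to the pre-Lie ring carried by $A/ann(p^{4k})$. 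Consequently the proviso ``this pre-Lie ring satisfies the assumptions of Definition~\ref{gf}'' is automatically fulfilled.

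With the hypotheses in place I would then apply Theorem~\ref{main} (equivalently Theorem~\ref{main1}) with $C=A$ and with the chosen $k$ satisfying $p^{k(p-1)}A=0$: the product displayed in the statement, built from the brace operation $*$ restricted to $p^{k}A$ together with the constants $\xi=\gamma^{p^{n-1}}$ for $\gamma$ a primitive root modulo $p^{n}$, defines a left nilpotent pre-Lie ring structure on $A/ann(p^{4k})$, and the brace $A/ann(p^{4k})$ is recovered from this pre-Lie ring by the modified group of flows construction of Definition~\ref{gf}. This is exactly the assertion of Theorem~\ref{rank1}.

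I do not expect a substantive obstacle in this deduction: the real work has been done upstream. The one step that is not pure bookkeeping is internal to Lemma~\ref{r} --- the passage from the rank bound to the triangularisability of the left-multiplication maps $L_{a}$ on the $\mathbb{Z}_{p}$-space $L(b)/pL(b)$, an adaptation of Lemma $2.7$ of \cite{BA} --- but that lemma has already been established and is used here as a black box. The only care needed is to keep the two successive quotients straight, first by $ann(p^{2k})$ and then by the annihilator of $p^{2k}$ inside that quotient, which together amount to passing to $A/ann(p^{4k})$.
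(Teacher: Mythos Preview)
Your proposal is correct and follows essentially the same route as the paper: invoke Lemma~\ref{r} to obtain Properties~$1'$ and~$1''$ from the rank hypothesis, then apply Theorem~\ref{main} (together with Theorem~\ref{preLie} for the pre-Lie structure). Your treatment is more detailed---handling the degenerate case $p\le 3$ and arguing via Lemma~\ref{sp3} that the proviso on Definition~\ref{gf} is automatic---but the underlying strategy is identical to the paper's two-line proof.
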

\begin{proof}  The fact that $(A, +, \cdot)$ is a left nilpotent pre-Lie ring follows from Theorem \ref{preLie}. The result now follows from Theorem \ref{main} and Lemma \ref{r}.
\end{proof}
\begin{theorem}\label{uniform1}
Let $(A, \circ , +)$ be a brace of cardinality $p^{n}$ for a prime number $p$ and for a natural number $n$, whose additive group $(A, +)$ is uniform. Suppose that $A^{\lfloor \frac {p-1}4\rfloor}=0$ for all $a,b\in A$. Let $k$ be such that $p^{k(p-1)}A=0$. Define 
\[[a]_{ann(p^{4})}\cdot [b]_{ann(p^{4k})}=-(1+p+p^{2}+\ldots +p^{n})\sum_{i=0}^{p-2}\xi ^{p-1-i}((\xi ^{i}p^{k}a)* b)]_{ann(p^{4k})}.\]
 Then $(A/ann(p^{4}), +, \cdot )$ is a left nilpotent pre-Lie ring, and the brace 
 $(A/ann(p^{4}), +, \circ  )$ is obtained by the construction of modified group of flows 
 applied to this pre-Lie ring.
\end{theorem}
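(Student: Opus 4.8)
The plan is to deduce Theorem~\ref{uniform1} from Theorem~\ref{main}, in exactly the way Theorem~\ref{rank1} was deduced from it, the only change being which lemma supplies Properties~$1'$ and~$1''$. First I would observe that the hypothesis $A^{\lfloor\frac{p-1}{4}\rfloor}\subseteq pA$ is precisely Property~$1'$; if instead only $A^{j}\subseteq pA$ for some $j\le\lfloor\frac{p-1}{4}\rfloor$ is assumed, Property~$1'$ still follows since the left nilpotent series is decreasing, so $A^{\lfloor\frac{p-1}{4}\rfloor}\subseteq A^{j}\subseteq pA$. Then I would obtain Property~$1''$ from the uniformity of $(A,+)$, as in Lemma~\ref{s}: writing $(A,+)$ as a direct sum of copies of $\mathbb{Z}/p^{\alpha}\mathbb{Z}$, one has $ann(p^{i})=p^{\alpha-i}A$ for $i\le\alpha$ (and $ann(p^{i})=A$ for $i\ge\alpha$), and since $*$ is additive in its second variable,
\[A*\bigl(A*(\cdots*(A*ann(p^{i})))\bigr)=p^{\alpha-i}\,A^{\lfloor\frac{p-1}{4}\rfloor+1}\subseteq p^{\alpha-i}\cdot pA=p\cdot ann(p^{i}),\]
with $A$ appearing $\lfloor\frac{p-1}{4}\rfloor$ times, which is Property~$1''$. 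This is literally the computation proving Lemma~\ref{s}, so in practice one just cites that lemma.

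With Properties~$1'$ and~$1''$ in hand, the remainder is a chain of already-proved facts. By the proposition of \cite{paper1} recalled above, $ann(p^{i})$ and $p^{i}A$ are ideals of $A$ with $(p^{i}A)*ann(p^{j})\subseteq ann(p^{j-i})$ for $j\ge i$, so Theorem~\ref{preLie} applies and shows that $(A/ann(p^{4k}),+,\cdot)$ with the displayed formula for $\cdot$ is a left nilpotent pre-Lie ring. Lemma~\ref{sp3} shows this pre-Lie ring satisfies Property~$3$, so the modified group of flows of Definition~\ref{gf} is defined on it. Finally, Theorem~\ref{main}, applied to the brace $A$ (which satisfies $1'$ and $1''$), gives that the factor brace $(A/ann(p^{4k}),+,\circ)$ is obtained from $(A/ann(p^{4k}),+,\cdot)$ by the modified group of flows construction, which is the assertion.

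I do not expect a genuine obstacle. The statement is a specialisation of Theorem~\ref{main} to braces with uniform additive group and $A^{\lfloor\frac{p-1}{4}\rfloor}\subseteq pA$, entirely parallel to Theorem~\ref{rank1}; the one small point needing care is the bookkeeping that uniformity forces $ann(p^{i})=p^{\alpha-i}A$ and thereby promotes the single inclusion $A^{\lfloor\frac{p-1}{4}\rfloor}\subseteq pA$ to the whole family of inclusions constituting Property~$1''$, and this is precisely what Lemma~\ref{s} records. Hence the proof ultimately reduces to invoking Lemma~\ref{s}, Theorem~\ref{preLie} and Theorem~\ref{main}.
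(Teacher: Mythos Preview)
Your proposal is correct and follows essentially the same route as the paper: verify Property~$1'$ from the hypothesis, deduce Property~$1''$ from uniformity via the identity $ann(p^{i})=p^{\alpha-i}A$ (the content of Lemma~\ref{s}), and then invoke Theorem~\ref{main}. The paper's proof is terser, skipping the intermediate references to Theorem~\ref{preLie} and Lemma~\ref{sp3} since those are already subsumed in the proof of Theorem~\ref{main}, but the argument is the same.
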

\begin{proof} By assumptions $A$ satisfies property $1'$. Observe that $A$ satisfies property $1''$ because 
  the additive group of $A$ is a direct sum of cyclic 
 groups of cardinality $p^{\alpha }$ for some $\alpha $. Observe that $A$ satisfies property $1''$ because  $ann(p^{i})=p^{\alpha -i}A$, for each $i$. 
 The result now follows from Theorem \ref{main}.
\end{proof}
 {\bf Proof of Theorems \ref{rank} and \ref{uniform}}: 
 It follows from Theorem \ref{main} and Theorems \ref{rank1}, \ref{uniform1}.

{\bf Proof of Theorem \ref{main2}.} It follows from Theorem \ref{main1}.

$ $

{\bf Acknowledgments.} The author acknowledges support from the
EPSRC programme grant EP/R034826/1 and from the EPSRC research grant EP/V008129/1. 
 The author is very greateful to Bettina Eick and Efim Zelmanov for answering her questions about what is known about extensions of Lazard's correspondence in group theory.

\end{document}